\definecolor{darkgreen}{rgb}{0.0, 0.42, 0.24}
\definecolor{orange}{rgb}{0.93, 0.57, 0.13}
\definecolor{cafe}{rgb}{0.43, 0.21, 0.1}
\definecolor{pink}{rgb}{1.0, 0.0, 0.5}
\newcommand\OO{\Omega}
\newcommand\RR{\mathbb R}
\newcommand\LLh{{\mathcal L}_h}
\newcommand\PP{\mathbb{P}}
\newcommand\B{B_{\mathrm{stab}}}
\newcommand\F{F_{\mathrm{stab}}}
\newcommand\ff{{\boldsymbol f}}
\newcommand\uu{{\boldsymbol u}}
\newcommand\uumh{{\boldsymbol u}_{m}}
\newcommand\zz{{\boldsymbol z}}
\newcommand\vv{{\boldsymbol v}}
\newcommand\ww{{\boldsymbol w}}
\newcommand\II{\boldsymbol{I}}
\newcommand\JJJ{\mathcal{J}}
\newcommand{\defi}{{\,:=\,}}
\newcommand{\zero}{\boldsymbol{0}}
\newcommand{\balpha}{\boldsymbol{\alpha}}
\newcommand{\HH}{\boldsymbol{H}}
\newcommand{\TT}{\mathcal{T}}
\newcommand{\etw}{\eta^{\ww}}
\newcommand{\etp}{\eta^p}
\newcommand{\ew}{e^{\ww}_h}
\newcommand{\ep}{e^p_h}
\newcommand{\T}{\mathcal{T}}
\newcommand{\nh}[1]{{\left\vert\kern-0.25ex\left\vert\kern-0.25ex\left\vert #1 \right\vert\kern-0.25ex\right\vert\kern-0.25ex\right\vert}}
\newcommand{\Sconv}{S_{\mathrm{conv}}}
\newcommand{\Spress}{S_{\mathrm{press}}}
\newcommand{\nhc}[1]{{\left\vert\kern-0.25ex\left\vert\kern-0.25ex\left\vert #1 
		\right\vert\kern-0.25ex\right\vert\kern-0.25ex\right\vert^2}}
\newcommand{\aaa}{\boldsymbol{a}}
\newcommand{\aaah}{\boldsymbol{a}_h}
\newcommand{\trih}{\mathcal{T}_h}
\newcommand{\dmean}[1]{{\left\{\kern-0.6ex\left\{ #1   \right\}\kern-0.6ex\right\}}}
\newcommand{\Cinv}{C_{\mathrm{inv}}}
\newcommand{\vvt}{\hat{\vv}}
\newtheorem{theorem}{Theorem}
\newtheorem{lemma}{Lemma}
\newtheorem{remark}{Remark}
\newtheorem{corollary}{Corollary}
\title{{A stabilized finite element method for a flow  problem arising from 4D flow magnetic resonance imaging}}
\author{{Gabriel R. Barrenechea\thanks{Department of Mathematics and Statistics, University of Strathclyde, 26 Richmond Street,	Glasgow, G1 1XH, UK}\quad Cristian C\'arcamo\thanks{Weierstraß-Institut,  Anton-Wilhelm-Amo-Straße 39, 10117 Berlin, Germany}\quad Abner H. Poza\thanks{Departamento de Matemática y Física Aplicadas, Universidad Católica de la Santísima Concepción, Casilla 297, Concepción, Chile}
}}
\begin{document}

\maketitle

\begin{abstract}
In this work we propose, {analyze}, and validate a stabilized finite element method for 
a flow problem arising from the assessment of 
{4D Flow Magnetic Resonance Imaging quality}.  Starting from the Navier-Stokes equation and splitting its velocity as the MRI-observed one (considered a datum) plus an ``observation  error'',
a modified Navier-Stokes problem is derived. This procedure allows us to estimate the quality of the measured velocity fields, while also providing an alternative approach to pressure reconstruction,
thereby avoiding invasive procedures.  Since equal-order approximations have become  a popular choice for problems linked to pressure recovery from MRI images, we design a stabilized finite element method allowing equal-order interpolations for velocity and pressure.   In the linearized version of the resulting model, we prove stability and (optimal order) error estimates and test the method with a
variety of numerical experiments testing both the linearized case  and the more realistic nonlinear one.
\end{abstract}

% REQUIRED
{\bf Keywords}: 4D Flow MRI, Navier-Stokes equations, Stabilized finite element methods.
\vspace{.25cm}

% REQUIRED
%{\bf MSC}: 
%\input{./../SourceFiles/ams.tex}

% Main document
% %%%%%%%%%%%%%%%%%
% Intro
%%%%%%%%%%%%%%%%%%%%%%%%%%%%%%%%%%%%%%%%%%%
\section{Introduction}\label{section:intro}
A precise characterization of cardiovascular hemodynamics is fundamental for the early diagnosis and risk stratification in pathologies such as aortic aneurysms, valvular diseases and congenital heart defects \cite{Markl12,Hsu2021}.  In this scenario, {4D flow magnetic resonance imaging (4D flow MRI)} has emerged as the non-invasive gold-standard technique to quantify in vivo the {cardiovascular} blood flow \cite{Horowitz2021}.  Unlike ultrasound or conventional 2D resonance techniques {\cite{Gabbour2015_4DFlow,Ramos2020}}, 4D  flow MRI enables the acquisition of three-dimensional velocity fields resolved in time, opening the door to the calculation of hemodynamics biomarkers such as 
{helicity, oscillatory shear index, pressure, turbulent kinetic energy, vorticity, wall shear stress, among other quantities of interest that are necessary to characterize the physiological or pathological state of the cardiovascular system \cite{Bissell23}}. Nevertheless, the clinic use of this technology presents a longstanding obstacle: the data acquisition time.  Encoding velocity in three spatial directions throughout the cardiac cycle requires the acquisition of a large amount of frequency-domain data ($k-$space).  
As a result, the scanning time increases which may lead to the introduction of artifacts, thereby degrading the image quality and the reliability of subsequent numerical quantification \cite{BER04}. \\

To lower the processing time, the scientific community has moved away from the classical Nyquist-Shanon sampling,  which traditionally required fully sampling $k-$space to avoid aliasing \cite{sha49}, to aggressive undersampling techniques that dramatically reduce acquisition time \cite{Markl12}. This implies that just a small fraction of the data is captured reducing the scanning time. However, the application of aggressive undersampling also means being careful, because otherwise we could obtain aliasing. The shift from Nyquist-Shannon to aggressive undersampling  has been made possible by modern reconstruction strategies,  which recover high-fidelity images from highly sparse $k-$space data, thereby enabling clinically feasible {4D flow MRI} acquisition. In the past decade, two techniques have dominated the methodological landscape: Parallel Imaging (SENSE and  GRAPPA) \cite{Pruessmann1999,Griswold2002,Tsao2003,Schnell2014} and Compressed Sensing \cite{Lustig2007,Ma2019,Neuhaus2019,Pathrose2021}. These methods assume that flow signal is sparse in some transformation domain, and reconstruct the image solving an optimization problem \cite{BloUe07, Pruessmann1999}.\\
%
%
%
%
%\textcolor{red}{Continuar por ac\'a en la pr\'oxima revisi\'on: Viernes 19 de Diciembre 2025 (12:10 hrs)}\\
%
{Total variation regularization is a sparsity-promoting prior commonly used in compressed sensing MRI and 4D flow MRI reconstructions \cite{Lustig2007,Lustig2008}}.
Seminal studies have demonstrated that regularization based on the {total variation} and its generalized variants enable robust reconstructions with significant acceleration factors \cite{BloUe07}. However,  most of the algorithms of standard 4D flow data reconstruction process, ignore the physical nature of the subject of study, which is a moving incompressible fluid, {and thus for example, violate fundamental physical principles, such as mass conservation.  This {gap} introduces systematic errors in the computation of spatial derivatives, which suggests that purely phenomenological reconstruction pressure have reached a ceiling in their numerical precision.  In order to obtain reliable approximations of hemodynamic biomarkers, including pressure differences,  some approaches have been developed over the past decades.
	%In particular,  as the blood flow in large vessels can be modeled by the Navier-Stokes equations for incompressible fluid \cite{Krittian2012,Zhang2020PressureWLS},  the velocity field must be divergence-free \cite{7164136,7471522,BGWK2013,OUTHA2015}.
	Despite the careful handling and processing of 4D flow MRI data by trained technicians, the acquired measurements inevitably contain noise. This noise may now originate from limitations in instrument calibration as well as from involuntary patient motion during image acquisition.  \\
	{In light of these considerations, we assume throughout this work that the available 4D flow MRI data consist of noisy velocity measurements, while the underlying physical velocity field $\uu$ satisfies the incompressible Navier–Stokes equations.} Decomposing the fluid velocity $\uu$ into the reconstruction $\uu_m$ of the MRI-obtained data and an unknown noise component $\ww$, 
	we establish, under suitable assumptions, that $\ww$
	satisfies a modified version of the Navier-Stokes equations, whose source terms depend, among other factors, on $\uu_m$. The unknowns in this differential problem are the noise velocity 
	$\ww$ and the fluid pressure $p$,  and in this way we obtain an alternative procedure for pressure reconstruction (see, e.g., \cite{carber2023,GMSCUBM2022}).
	Due to the nonlinear nature of the problem and dependence on time intrinsic to the pulsatile flow, the computational costs might be prohibitively expensive.  
	As a consequence of the above discussion, it has become usual to use low-order, equal order finite element pairs for velocity and pressure in the fluid solution (see, e.g., \cite{GMSCUBM2022}). This choice violates the standard
	inf-sup condition that is advocated to prove stability (see, e.g., \cite{BGHRR24}), but it can be extremely efficient if it is supplemented with appropriate stabilization terms.  
	
	Stabilized finite element methods appeared as a need to both enhance the stability for convection-dominated problems, and also to render velocity-pressure spaces that are not inf-sup stable
	(in particular, equal-order spaces) usable in practice.  Since the mid 1980's there has been a vast amount of work devoted to this technique, and a full literature review is outside the scope
	of the present work. Herein, we briefly mention that, for fluid problems, the methods can be divided into residual (such as PSPG/SDFEM \cite{HFB86,TV96}, GLS \cite{FS91,XIA2007513,Blas-Aniso-08,PR24}, Unusual FEM \cite{FF95,BV02}, just to name a few, 
	see \cite{BBGS-Taxo-04} for a  review), and non-residual (such as CIP \cite{BFH-CIP-06},  or LPS \cite{Be-Bra-016,BV2010,APV15}, see \cite{RST08,John2020} for  reviews). 
	
	In this work we propose a stabilized finite element method for  a flow problem  arising from {4D flow MRI}. The design of the stabilizing term is inspired from the one analyzed in \cite{BFV04}, (and thus it can be aligned with the family of Unusual Stabilized finite element methods)  as this
	type of stabilization has provided very accurate  stable numerical results, e.g., in \cite{ACPV21}.   The method's presentation and analysis are presented for a linearized version of the problem,  where we prove
	well-posedness and optimal error estimates.  Additionally,  we
	present numerical experiments for the steady-state Navier-Stokes equation,  
	showcasing the ability of the method to
	recover pressure and velocities from piecewise constant data, as it would be natural in the {4D flow MRI} context. 
	A related approach can be found in \cite{GMSCUBM2022}, where a finite element scheme is studied, and one of the main contributions of the present work is the analysis of the optimal convergence of the proposed scheme.}

The rest of the manuscript is organized as follows.  The remaining of this introduction is devoted to presenting some preliminary notation used throughout the manuscript. 
In Section~\ref{Sec:model} we present the derivation of the perturbed problem starting from the Navier-Stokes equation,  which is then analyzed in Section~\ref{model}. 
The stabilized finite element method for the linearized problem in presented in Section~\ref{discrete}, and its well-posedness and optimal order error estimates in velocity and pressure
are proven in Section~\ref{Sec:Error}.  Finally, in Section~\ref{Sec:Numerics} we present a series of numerical experiments both validating the error estimates,  and in more realistic cases.
Some conclusions and future work are drawn in Section~\ref{conclusions}.

\subsection{Preliminary notation.} Throughout the work we utilize standard simplified terminology for Sobolev space, inner product and norms (see, e.g. \cite{EG21-FE1}).
In particular, if $\mathcal{O}$ is an open subset of $\RR^d$, with $d\in\{2,3\}$, $L^2(\mathcal{O})$ is the {Hilbert} space of Lebesgue square integrable functions over $\mathcal{O}$ and $(\cdot,\cdot)_{\mathcal{O}}$ denotes the $L^2(\mathcal{O})$ inner product for scalar, vector or {tensor-valued} functions,  whenever appropriate.
As is conventional, $W^{k,p}(\mathcal{O})$, with $1\leq p\leq\infty$ and $k\geq 0$, denotes the Sobolev spaces with norm $\|\cdot\|_{W^{k,p}(\mathcal{O})}$, and $H^k(\mathcal{O})=W^{k,2}(\mathbb{\mathcal{O}})$ denotes the Hilbert spaces with seminorm $|\cdot|_{k,\mathcal{O}}$ and norm $\|\cdot\|_{W^{k,2}(\mathcal{O})}=\|\cdot\|_{k,\mathcal{O}}$ for scalar, vector or tensor-valued functions,  when appropriate.  
In our problem of interest $\Omega\subset\RR^{d}$, $d\in\{2,\,3\}$ will be an open, bounded, domain with polyhedral Lipschitz boundary {$\partial \OO$}.  For the particular case $\mathcal{O}=\Omega$, we will simply denote the inner product $(\cdot,\cdot)_\Omega$ by $(\cdot,\cdot)$.  Finally, in the weak formulation below we will  use of the following Hilbert spaces
\[
\HH\defi H_0^1(\OO)^d=\{\vv\in H^1(\OO)^d:\vv=\boldsymbol{0} \textrm{ on }\partial\OO\}, \qquad 
Q\defi  L^2_0(\OO)=\left\{q\in L^2(\OO):\int_\OO q=0\right\}.
\]

\section{From the 4D flow MRI's image to the model problem}\label{Sec:model}
We consider  the dynamics of blood flow in human circulatory system modeled by 
the incompressible Navier-Stokes equations in the vessel lumen $\OO$: {Find the perfect velocity and pressure measurements $(\uu,p)$, such that}
\begin{equation*}
	%\tag{P}
	\label{NS}
	(\textrm{NS}) \qquad \left\{
	\begin{array}{rll}
		\rho \, \uu_t  - \mu \, \Delta \uu + \rho\, (\nabla \uu)\uu+ \nabla p   \,&= \, \zero&\text{in }\OO \times(0,T],\\
		\nabla\cdot\uu \,&=\, 0 &\text{in }\OO \times (0,T],\\
		\uu  \,&=\,  \boldsymbol{0} &\text{on }\partial \OO\times(0,T],\\
		\uu(0)  \,&=\,  \uu_0 &\text{in } \OO,
	\end{array}
	\right.
\end{equation*}
where $\uu_0\in H^1_0(\OO)^d$ is an initial datum,
$\mu>0$ is the kinematic viscosity, $\rho>0$ is the density, 
%$\sigma$ a given positive parameter, 
and $T$ denotes the final time.\\
As it was mentioned in the introduction, the main goal in this paper is to assess how
good the measurements made using MRI are.   For this purpose, we denote by $\uu_m$ the reconstruction
of the 4D flow measurements (e.g., the interpolation of nodal values of the velocity), and we assume
that there is an additive noise $\ww$ in the velocity measurements, that satisfies the following properties:
\begin{itemize}
	\item[(H1)] $\uu=\uu_m + \ww$ in $\OO$.
	\item[(H2)] $\nabla \cdot \ww= -\nabla \cdot \uu_m=g$ in $\OO$.
	\item[(H3)] $\ww=\zero$ on $\partial \OO$.
\end{itemize}
Using (H1)-(H3), we can see that a variational formulation of $(\textrm{NS})$ is given by:
\emph{Find} $(\ww(t),p(t))\in \HH \times Q$ such that
\begin{align}
	\nonumber
\small	\rho \, (\ww_t,\vv) + \mu \, (\nabla \ww,\nabla \vv) + &\, \rho\, (\nabla(\uu_m + \ww)\ww,\vv) +\rho\, ((\nabla \ww)\uu_m,\vv)
	- (p,\nabla \cdot \vv) + (q,\nabla \cdot \ww)\\
	=&\, -\rho\, ((\uu_m)_t,\vv) -
	\mu\, (\nabla \uu_m,\nabla \vv) - \rho \, ((\nabla \uu_m)\uu_m,\vv) +(g,q),
\end{align}
for all $(\vv,q) \in \HH\times Q$.

Now, if we use a 
semi-implicit Euler scheme with fixed time step $\tau$,
%
%{backward} Euler scheme with fixed time step $\tau$,
%$\tau \defi \dfrac{\rho}{\sigma}$,
% and
%using a Picard iteration, we replace the nonlinear term $(\nabla \ww^k)\ww^{k}$ by $(\nabla \ww^k)\ww^{k-1}$, 
we have the following time discretization: Given $\ww^0=\zero$, 
for $k=1,2,\dots,$  find $(\ww^k,p^k)\in \HH \times Q$ such that
\begin{align}
	\nonumber
	&\, \sigma\, (\ww^k,\vv)
	+
	\mu \,(\nabla \ww^k,\nabla \vv) +
	\rho\, ((\nabla \uu_m^k)\ww^k,\vv)+
	\rho\, ((\nabla \ww^k)\ww^{k-1},\vv)
	+\rho \, ((\nabla \ww^k)\uu_m^k,\vv)  - (p^k,\nabla \cdot \vv)\\
	&\, + (q,\nabla \cdot \ww^k)  =\sigma\, (\ww^{k-1},\vv)- \sigma\, (\uu_m^k-\uu_m^{k-1},\vv) -\mu(\nabla \uu_m^k,\nabla \vv) 
	-\rho \, ((\nabla \uu_m^k)\uu_m^k,\vv)+(g,q),
	\label{timescheme}
\end{align}
where $\sigma \defi \dfrac{\rho}{\tau}$, for all $(\vv,q) \in \HH\times Q$.

{The time scheme \eqref{timescheme} is, in fact, the weak formulation of a 
	linear modified Oseen equation. So, in the next section we will introduce a model problem that
	is an idealization of \eqref{timescheme} and for which we will propose a stabilized finite element scheme.
	
	%\begin{remark}
	% The time scheme \eqref{timescheme} correspond to a simple modification of the variational formulation \eqref{vf1}-\eqref{vf2} 
	% and for this reason in the next section, we will define a new discrete 
	% scheme to assessment of 4D flow magnetic resonance imaging quality.
	%\end{remark}
	
	%%%%%%%%%%%%%%%%%%%%%%%%%%%%%%%%%%%%
	\section{Model problem and preliminary results} \label{model}
	%%%%%%%%%%%%%%%%%%%%%%%%%%%%%%%%%%%%
	The singularly perturbed Oseen problem that will study in this work consists on finding the velocity  $\ww$ and the pressure $p$ such that satisfying
	equation system
	\begin{equation*}
		%\tag{P}
		\label{P}
		(\textrm{P}) \qquad 
		\left\{
		\begin{array}{rll}
			\sigma\, \ww  - \mu \, \Delta \ww +
			\rho\, (\nabla \uu_m)\ww + \rho\, (\nabla \ww)(\aaa+\uu_m)+ \nabla p  \,&= \, \ff &\text{in }\OO,\\
			\nabla\cdot\ww \,&=\, g &\text{in }\OO,\\
			\ww  \,&=\,  \boldsymbol{0} &\text{on }\partial \OO,\\
		\end{array}
		\right.
	\end{equation*}
	where  
	$\ff\in L^2(\OO)^d$ is the body force vector,  $g\in L_0^2(\OO)$,  
	$\uu_m\in W^{1,\infty}(\OO)^d$ and $\aaa\in W^{1,\infty}(\OO)^d$  are  given convective velocities field.  In addition, $\aaa$ is a given solenoidal datum,
	and 
	$\uu_m$ is the velocity field reconstructed from MRI-acquired data in $\Omega$. The concrete definition for each
	case will be specified in Section~\ref{discrete} (cf. Remark  \ref{remark2}).

	The standard variational formulation of  $(\textrm{P})$ reads:
	{\emph{Find}} $(\ww,p)\in \HH \times Q$ such that
	\begin{align}
		\label{vf1}
		\tilde{a}(\ww,\vv) + b(\vv,p) =&\, (\ff,\vv) \quad \forall \vv \in \HH,\\
		\label{vf2}
		b(\ww,q) =&\, {-(g,q)} \quad \forall q \in Q,
	\end{align}
	where $\tilde{a}: \HH\times \HH \longrightarrow \RR$  and  $b: \HH\times Q \longrightarrow \RR$ are the bilinear forms defined by
	\begin{equation}
		\tilde{a}(\ww,\vv) = \sigma\, (\ww,\vv)  + \mu \,(\nabla \ww,\nabla \vv) +
		\rho\, ((\nabla \uu_m)\ww,\vv) 
		+\rho\, ((\nabla \ww)(\aaa+\uu_m),\vv)
		\qquad \forall \ww,\vv \in \HH,
		\label{def-form-a}
	\end{equation}
	and
	\begin{equation}
		b(\vv,q) =-(q,\nabla \cdot \vv) \qquad \forall (\vv,q) \in \HH \times Q.
		\label{def-form-b}
	\end{equation}
	%
	%
	%\newpage
	%
	The following results will be needed throughout the paper.
	\begin{lemma}
		\label{trilinearprop} 
		For all $\ww,\vv \in \HH$ and $\balpha \in H^1(\OO)^d$, we get 
		\begin{equation}
			\label{eq_may:2.5}
			((\nabla \ww)\balpha,\vv) = - ((\nabla \vv)\balpha,\ww) - ((\nabla \cdot \balpha)\ww,\vv).
		\end{equation}
		%Moreover, there is a positive constant $\Csob$, such that
		%\begin{align}
		%\label{Csob-inequality}
		% ((\nabla \cdot \balpha)\ww,\vv)_{\OO} \le \Csob\, \|\nabla \cdot \balpha\|_{0,\OO}\|\ww\|_{1,\OO}\|\vv\|_{1,\OO}.
		%\end{align}
	\end{lemma}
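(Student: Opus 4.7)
The plan is to prove this by a straightforward componentwise integration by parts, using the fact that $\ww$ and $\vv$ vanish on $\PO$, and then to rearrange the resulting terms into the claimed form. This is a standard identity for the trilinear convective form, and the only subtle point is justifying the integration by parts under the given regularity.

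First I would write the left-hand side componentwise:
\[
((\nabla \ww)\balpha,\vv) = \sum_{i,j=1}^{d} \int_{\OO} \alpha_j\, (\partial_j w_i)\, v_i\, dx.
\]
Next, for each fixed pair $(i,j)$ I would apply integration by parts in the variable $x_j$ to move the derivative off $w_i$ and onto the product $\alpha_j v_i$:
\[
\int_{\OO} \alpha_j\, (\partial_j w_i)\, v_i\, dx
 = -\int_{\OO} w_i\, \partial_j(\alpha_j v_i)\, dx
 + \int_{\PO} \alpha_j\, w_i\, v_i\, n_j\, dS.
\]
The boundary term vanishes because $\vv\in\HH = H^1_0(\OO)^d$ (equivalently, one could use $\ww\in\HH$). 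Expanding the derivative via the product rule and summing over $i,j$ yields
\[
((\nabla \ww)\balpha,\vv) = -\sum_{i,j}\int_{\OO} w_i\,(\partial_j \alpha_j)\, v_i\, dx \;-\; \sum_{i,j}\int_{\OO} w_i\,\alpha_j\,(\partial_j v_i)\, dx.
\]
Recognising the two summations as $((\nabla\cdot\balpha)\ww,\vv)$ and $((\nabla\vv)\balpha,\ww)$, respectively, produces the stated identity.

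The main obstacle is the justification of the integration-by-parts step under the stated regularity $\balpha\in H^1(\OO)^d$, $\ww,\vv\in H^1_0(\OO)^d$, since the three factors are each merely $H^1$ and their derivatives are only in $L^2$. To make this rigorous, I would first check that all integrals are well-defined: for $d\in\{2,3\}$ the Sobolev embedding $H^1(\OO)\hookrightarrow L^6(\OO)$ together with Hölder's inequality ($\tfrac12+\tfrac13+\tfrac16=1$, using $L^2\cdot L^3\cdot L^6\subset L^1$) ensures each integrand is integrable. I would then establish the identity first for smooth $\balpha\in C^{\infty}(\overline\OO)^d$ and $\ww,\vv\in C^{\infty}_c(\OO)^d$, where the integration by parts is classical and the boundary term is zero outright, and finally extend to the full regularity by a density argument, since each side of the identity is continuous on $H^1\times H^1\times H^1$ with the norms controlled by the Sobolev embeddings just used.
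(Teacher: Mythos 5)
Your proof is correct. The paper itself does not prove this lemma --- it simply refers the reader to the cited reference --- and your argument (componentwise integration by parts with the boundary term killed by the homogeneous trace, justified first for smooth fields and extended by density using the embedding $H^1(\OO)\hookrightarrow L^6(\OO)$ for $d\in\{2,3\}$ to make every trilinear term continuous on $H^1\times H^1\times H^1$) is precisely the standard proof that such a reference contains, with the integrability and density issues handled properly.
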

	%
	% Proof
	%
	\begin{proof}
		%See \cite{GR86}.
		See \cite{BGHRR24}.
	\end{proof}
	%

	%
	% Theorem 1
	%
	\begin{theorem}\label{th1}
		Assume that $\uu_m \in W^{1,\infty}(\OO)^d$ and that
		\begin{equation}
			\label{assum1d}
			\sigma > 4 \rho \, \| \nabla \uu_m \|_{\infty,\OO}.
		\end{equation}
		Then \eqref{vf1}-\eqref{vf2} has a unique solution $(\ww,p)\in \HH\times Q$.%, \textcolor{red}{and moreover there is a positive constant $C$ such that
			%\[ \|(\ww,p) \| \le C\, \|\ff\|_{0,\OO}.\]}
	\end{theorem}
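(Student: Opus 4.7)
My plan is to apply the classical Babuska-Brezzi (saddle-point) theory to \eqref{vf1}--\eqref{vf2} on $\HH\times Q$. Three ingredients are required: continuity of both $\tilde{a}$ and $b$, coercivity of $\tilde{a}$ on $\HH$ (which trivially implies coercivity on $\ker b$), and the inf-sup condition for $b$. The inf-sup condition for $b(\vv,q)=-(q,\nabla\cdot\vv)$ on $\HH\times Q$ is the classical divergence inf-sup of Necas/Ladyzhenskaya, which I would quote off the shelf. Continuity of $b$ is immediate, and continuity of $\tilde{a}$ follows by Cauchy-Schwarz together with the hypothesis $\uu_m,\aaa\in W^{1,\infty}(\OO)^d$, which controls the two transport-type contributions by a constant times $\|\nabla\ww\|_{0,\OO}\|\vv\|_{0,\OO}$ or $\|\ww\|_{0,\OO}\|\vv\|_{0,\OO}$. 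Hence the whole effort lies in establishing coercivity of $\tilde{a}$.

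For the coercivity I would test $\tilde{a}(\ww,\vv)$ with $\vv=\ww$. The first two terms $\sigma\|\ww\|^2_{0,\OO}+\mu\|\nabla\ww\|^2_{0,\OO}$ are manifestly coercive, and the reactive-type contribution is easily bounded by $|\rho((\nabla\uu_m)\ww,\ww)|\le\rho\|\nabla\uu_m\|_{\infty,\OO}\|\ww\|^2_{0,\OO}$. The delicate term is the convective $\rho((\nabla\ww)(\aaa+\uu_m),\ww)$, whose symmetric part is not obvious; here Lemma~\ref{trilinearprop} is essential. Applying it with $\balpha=\aaa+\uu_m$ and both vector arguments equal to $\ww$ gives the antisymmetric identity
\[
((\nabla\ww)(\aaa+\uu_m),\ww)=-\tfrac12\bigl((\nabla\cdot(\aaa+\uu_m))\ww,\ww\bigr)=-\tfrac12\bigl((\nabla\cdot\uu_m)\ww,\ww\bigr),
\]
where in the last equality I use that $\aaa$ is solenoidal. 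Substituting this identity yields
\[
\tilde{a}(\ww,\ww)\,\geq\,\Bigl(\sigma-\rho\|\nabla\uu_m\|_{\infty,\OO}-\tfrac{\rho}{2}\|\nabla\cdot\uu_m\|_{\infty,\OO}\Bigr)\|\ww\|^2_{0,\OO}+\mu\|\nabla\ww\|^2_{0,\OO},
\]
and since $\|\nabla\cdot\uu_m\|_{\infty,\OO}$ is bounded by a dimensional constant times $\|\nabla\uu_m\|_{\infty,\OO}$, hypothesis \eqref{assum1d} (with the factor $4$ being comfortably larger than what is really needed in $d\le 3$) leaves a strictly positive coefficient in front of $\|\ww\|^2_{0,\OO}$. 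Coercivity of $\tilde{a}$ on $\HH$ follows, with constant of order $\min\{\mu,\sigma-c\rho\|\nabla\uu_m\|_{\infty,\OO}\}$.

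With the three Brezzi hypotheses in place, unique solvability of \eqref{vf1}--\eqref{vf2} in $\HH\times Q$ is immediate. The only real obstacle is the coercivity step and, within it, the treatment of the convective term: without the identity supplied by Lemma~\ref{trilinearprop} one would be forced to absorb $\rho((\nabla\ww)(\aaa+\uu_m),\ww)$ by Cauchy-Schwarz into $\mu\|\nabla\ww\|^2_{0,\OO}$, which would in turn require a large-viscosity hypothesis rather than the more natural large-$\sigma$ (i.e.\ small-time-step) threshold \eqref{assum1d}.
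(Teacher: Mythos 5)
Your proposal is correct and takes essentially the same route as the paper: coercivity of $\tilde{a}$ on $\HH$ via the antisymmetrization identity of Lemma~\ref{trilinearprop} (you apply it once with $\balpha=\aaa+\uu_m$, while the paper treats the $\aaa$ and $\uu_m$ contributions separately, which is equivalent), the classical divergence inf-sup condition for $b$, and then the Babu\v{s}ka--Brezzi theorem. The only cosmetic difference is that the paper absorbs $\tfrac12\|\nabla\cdot\uu_m\|_{\infty,\OO}\le\|\nabla\uu_m\|_{\infty,\OO}$ to get the explicit coercivity constant $\tfrac12\min\{\sigma,\mu\}$-type bound, whereas you keep a generic dimensional constant; both give strict positivity under \eqref{assum1d}.
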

	\begin{proof}
		Using the assumption \eqref{assum1d}, Lemma \ref{trilinearprop}  and that $\aaa$ and is
		divergence--free, for all $\vv \in \HH$ we get 
		\begin{align}
			\nonumber
			\tilde{a}(\vv,\vv) =&\, \sigma\, \|\vv\|_{0,\OO}^2 + \mu \, |\vv|_{1,\OO}^2 +
			\rho\, ((\nabla \uu_m)\vv,\vv)+  
			\rho\, ((\nabla \vv)\aaa,\vv) + \rho \, ((\nabla \vv)\uu_m,\vv) \\
			\nonumber
			\ge&\, \sigma\, \|\vv\|_{0,\OO}^2 +\mu \, |\vv|_{1,\OO}^2 +  
			\rho\, ((\nabla \uu_m)\vv,\vv) - \dfrac{\rho}{2}\, ((\nabla \cdot \uu_m)\vv,\vv) \\
			\nonumber
			\ge&\, \left(\sigma-\rho\, \|\nabla \uu_m\|_{\infty,\OO} - \dfrac{\rho}{2}\, \| \nabla \cdot \uu_m\|_{\infty,\OO}\right)\, \|\vv\|_{0,\OO}^2 +
			\mu \, |\vv|_{1,\OO}^2 
			\\
			\nonumber
			\ge&\,\left(\sigma-2\rho\, \|\nabla \uu_m\|_{\infty,\OO}\right)\, \|\vv\|_{0,\OO}^2  +  \mu \, |\vv|_{1,\OO}^2 \\
			\nonumber
			\ge&\, {\dfrac{1}{2}\, \Big\{\sigma\, \|\vv\|_{0,\OO}^2 + \mu \, |\vv|_{1,\OO}^2  \Big\},}
		\end{align}
		and thus $ \tilde{a}(\cdot,\cdot)$ is elliptic in $\boldsymbol{H}$.
		On the other hand, it is well-known in the literature (for details, see \cite{BGHRR24}), that the bilinear form $b$ satisfies the inf-sup condition:
		\[ \sup_{\vv\in \HH} \dfrac{b(\vv,q)}{\|\vv\|_{1,\OO}} \ge \beta \, \|q\|_{0,\OO} \qquad \forall q \in Q,\]
		where $\beta$ is a positive constant. Then the classical theory of Babu\v{s}ka-Brezzi can be applied to \eqref{vf1}-\eqref{vf2} and  the existence and uniqueness of weak solution $(\ww,p)$ is guaranteed.
	\end{proof}

	\begin{remark}
		The  assumption \eqref{assum1d} may appear as restrictive,  but it is consistent with the CFL condition 
		that would arise in a time-dependent simulation, if we remark that in such a case we would have
		$\dfrac{\sigma}{4\rho}=\dfrac{1}{4\tau}$, where $\tau$ represents a fixed time step.
	\end{remark}

%%%%%%%%%%%%%%%%%%%%%%%%%%%%%%%%%%%%%%%%%%%1
\section{The stabilized method} \label{discrete}
%%%%%%%%%%%%%%%%%%%%%%%%%%%%%%%%%%%%%%%%%%%

To propose the discrete scheme, in this work we will consider the  following assumptions:

\begin{itemize}
	\item There is a shape-regular initial mesh $\T_H$ of $\overline{\OO}$
	that allows the reconstruction of a {velocity} data $\uu_m\in \HH_H$ (which will be specified later). %
	\item When the initial mesh $\T_H$ is refined, this defines 
	a shape-regular family  $\{\T_h\}_{h>0}$ of partitions of $\overline{\OO}$
	with elements $T$, corresponding to triangles in $d=2$ and tetrahedra in $d=3$, respectively, of diameter $h_T$, and $h:=\max_{T\in\T_h}h_T$.
	\item There is an approximation of $\aaa$, denoted by $\aaa_h$, that belongs to the finite element space $\HH_h$ defined below. 
\end{itemize}

\begin{remark}
	\label{remark2}
	The function $\aaa_h$ may correspond to an approximation of the velocity {noise} in the previous step of a nonlinear scheme such as Newton or Picard iteration to solve the Navier-Stokes equation. For this reason in Section \ref{convergence} we will consider that the convective term $\aaa$ has the same regularity as the velocity {noise  $\ww$} and that his approximation $\aaa_h$ is in the same space as { $\ww_h$}.
\end{remark}

Since the objective is to propose a discrete scheme of finite elements with spaces of the same order of approximation, we introduce the following finite element subspaces of $\HH$ and $Q$, respectively:
\begin{align*}
	\HH_{h}  := & \left\{ \vv \in C(\overline{\OO})^d  \;:\; \vv|_T \in  \PP_k(T)^d,\quad \forall T\in\T_h  \right\}  \cap \HH,\\
	Q_{h}    := & \left\{  q \in C(\overline{\OO})  \;:\; q|_T \in  \PP_k(T), \quad  \forall T\in\T_h \right\}\cap Q,
\end{align*} 
with $k\geq 1$, where $\PP_k$ stands for the  space of polynomials of total degree smaller than or equal to $k$. \\

Using the above-defined finite element spaces 
the discrete stabilized scheme to approximate \eqref{vf1}-\eqref{vf2} {is given by}: {\emph{Find}} $(\ww_h,p_h)\in \HH_{h}\times Q_{h}$ such that
\begin{equation}\label{stab}
	\B(\ww_h,p_h;\vv_h,q_h)=\F(\vv_h,q_h)
	\qquad \forall (\vv_h,q_h)\in \HH_{h}\times Q_{h},
\end{equation}
where $\B:(\HH_{h}\times Q_{h})\times (\HH_{h}\times Q_{h}) \longrightarrow \RR$ is the bilinear form defined by
\begin{equation}
	\B(\ww_h,p_h;\vv_h,q_h) \defi\, A(\ww_h,p_h;\vv_h,q_h) + \Sconv(\ww_h;\vv_h)+\Spress(\ww_h,p_h;\vv_h,q_h).
	\label{def-form-Bstab}
\end{equation}
Here, the bilinear form {$A(\cdot,\cdot)$} is given by
\begin{align}
	\nonumber
	A(\ww_h,p_h;\vv_h,q_h) \defi &\, 
	\sigma\, (\ww_h,\vv_h) +  \mu \,(\nabla \ww_h,\nabla \vv_h) +
	\rho\, ((\nabla \uu_{m})\ww_h,\vv_h) +
	\\
	&\,  \lambda\, (\nabla \cdot \ww_h, \nabla\cdot \vv_h)  - (p_h,\nabla \cdot \vv_h)+ (q_h,\nabla \cdot \ww_h),
	\label{def-form-A}
\end{align}
where $\lambda$ is a positive parameter. Moreover, the modified convective term is 
\begin{equation}
	\Sconv(\ww_h;\vv_h) \defi \,   \rho\, ((\nabla \ww_h)(\aaa_h+\uu_{m}),\vv_h) + \dfrac{\rho}{2} \, \left( (\nabla \cdot \aaa_h)\, \ww_h , \vv_h\right),
	\label{def-form-Sconv}
\end{equation}
and the pressure stabilization term is defined by
\begin{equation}
\small	\Spress(\ww_h,p_h;\vv_h,q_h) 
	\defi \sum_{T \in \T_h}  \tau_T \, \Big(
	\sigma\, \ww_h  - \mu \, \Delta \ww_h + \LLh(\ww_h,p_h),
	\, - \sigma\, \vv_h +   \mu \, \Delta \vv_h + \LLh(\vv_h,q_h)
	\Big)_{T}, 
	\label{def-form-Spress}
\end{equation}
where $\LLh$ is the linear operator defined by $\LLh (\ww_h,p_h) \defi \rho\, (\nabla \uu_{m})\ww_h + \rho\, (\nabla \ww_h)(\aaa_h+\uu_{m}) + \nabla p_h$ and the stabilization parameter is given by
\begin{equation}\label{Def:Stab_Param}
	\tau_T \defi \dfrac{\delta h_T^2}{\sigma\, h_T^2 + \mu} \qquad \forall T \in \TT_h,
\end{equation}
where $\delta$ is a positive constant that to be specified later  (c.f. Lemma \ref{coercivity_lemma}). 
Finally, $\F :\HH_{h}\times Q_{h} \longrightarrow \RR$ is the linear form defined by
\begin{equation}
	\F(\vv_h,q_h) \defi (\ff,\vv_h)+
	(q_h,g) + \lambda\, (g,\nabla \cdot \vv_h)+ \sum_{T \in \T_h}  \tau_T \, \Big(\ff , 
	-\sigma\, \vv_h +   \mu \, \Delta \vv_h + \LLh(\vv_h,q_h) \Big)_{T}, 
	\label{def-form-Fstab}
\end{equation}
for all $(\vv_h,q_h)\in \HH_{h} \times Q_{h}$.

\begin{remark}
	The stabilized finite element method \eqref{stab} is inspired  in \cite{BFV04},  where a  Galerkin least--squares stabilized formulation for the Oseen equation was proposed.  {Unlike the scheme
		in \cite{BFV04},  due to the approximation $\aaa_h$ of $\aaa$, this scheme is no longer consistent,
		so an {ad-doc} analysis will have to be performed. Finally, it can be easily seen that the parameter
		$\tau_T$ satisfies the following properties:}
	% This new discrete scheme is not consistent because we are considering a non-free divergence approximation of the data $\aaa$. Moreover, into the stabilized parameter $\tau_T$ it is known that
	\begin{align}
		\label{prop_tau1}
		\sigma\,  \tau_T  \le&\, \delta,\\
		\label{prop_tau2}
		\mu\,  \tau_T  \le&\, \delta h_T^2,
	\end{align}
	for all $T\in \TT_h$.
\end{remark}
%
%
%
%\begin{remark}
%A similar pressure stabilization can be found for this same problem in \cite{GMSCUBM2022}, where the pressure stabilization is given by
%\begin{equation}
%\nonumber
%\Stpress(\ww_h,p_h;\vv_h,q_h) 
% \defi \sum_{T \in \T_h}  \dfrac{\delta h_T^2}{\mu} \, \Big(
%   %\rho\, (\nabla \uu_{m})\ww_h+\rho\, (\nabla \ww_h)(\aaa_h+\uu_{m}) + \nabla p_h 
%   \LLh(\ww_h,p_h)  
%   ,  \LLh(\vv_h,q_h)   \Big)_{T}, 
%%   & +  \sum_{K \in \T_h}  ( \nabla \cdot \vv_h, \delta_K\, \nabla \cdot \ww_h)_{K},
% \label{def-form-Stpress}
%\end{equation}
%for all $(\ww_h,p_h),(\vv_h,q_h) \in \HH_h\times Q_h$, and in this case, the discrete scheme also is not consistent.
%\end{remark}
%%
%
%
%
%\newpage
{Throughout this paper $C$ and $C_i$, $i>0$ will denote positive constants independent of the discretization parameter $h$ and the constants 
	$\mu$, $\sigma$, and $\rho$, 
	which may take different values at different places.}
\section{Stability and error estimates}\label{Sec:Error}% \eqref{stab}}

%The following results will be needed throughout of this work.
%  \begin{lemma}
%    \label{Cinv} There is a  positive constants $C_{\mathrm{inv}}$, independent of $h$, such that
%    \begin{align*}
	%  %    \|\vv_h\|_{\infty,T} \le&\, \Cinf\, h_T^{-1/2}|\vv_h|_{1,\OO},\\
	%            h_T\,\|\Delta\vv_h\|_{0,T} \le&\, C_{\mathrm{inv}}\,|\vv_h|_{1,T},\\
	%    \end{align*}
%    for all $T\in \TT_h$ and $\vv_h\in \HH_{h}$.
%  \end{lemma}
%  \begin{proof}
%  See Lemma 12.1 in \cite{EG21-FE1}.
%  \end{proof}

\subsection{Well-posedness} We start recalling that, as a consequence of the standard inverse inequality
(see, e.g., \cite[Lemma~12.1]{EG21-FE1}), there exists $C_{\mathrm{inv}}>0$, independent of $h$, 
such that
\begin{equation}\label{Cinv}
h_T\,\|\Delta\vv_h\|_{0,T} \le \, C_{\mathrm{inv}}\,|\vv_h|_{1,T},\\
\end{equation}
for all $T\in \TT_h$ and $\vv_h\in \HH_{h}$.

\noindent In  the product spaces $\HH_{h}\times Q_{h}$, we consider the norm
\begin{equation*}
\nh{(\vv_h,q_h)}:=  \left\{ \sigma\, \|\vv_h\|_{0,\OO}^2+\mu\, | \vv_h|_{1,\OO}^2+
\lambda \, \| \nabla\cdot \vv_h\|_{0,\OO}^2
+  \sum_{T \in \T_h}  \tau_T \,  \| \LLh(\vv_h,q_h)\|^2_{0,T}   
\right\}^{1/2},
\end{equation*}
for all $(\vv_h,q_h)\in \HH_{h}\times Q_{h}$.

The next result guarantees the well-posedness of the stabilized finite element scheme \eqref{stab}.

\begin{lemma}
\label{coercivity_lemma}
{Assume  that \eqref{assum1d} in Theorem~\ref{th1}  holds and that
	\begin{equation}
		\label{hip3-lemma3}
		\delta < \dfrac{1}{4}\,\min\left\{ \dfrac{1}{2}, \dfrac{1}{C_{inv}^2} \right\}.
	\end{equation}
	Then,  the bilinear form $\B(\cdot,\cdot)$ satisfies
	\begin{equation}
		\label{eq:3.3}
		\B(\ww_h,p_h;\ww_h,p_h) \ge  \frac{1}{4}\,  \nh{(\ww_h,p_h)}^2,
	\end{equation}
	for all $(\vv_h,q_h)\in \HH_{h} \times Q_{h}$.}
	\end{lemma}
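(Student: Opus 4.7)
The plan is to test the bilinear form against $(\ww_h,p_h)$ and to separately handle the three contributions $A$, $\Sconv$ and $\Spress$. First, I would observe that in $A(\ww_h,p_h;\ww_h,p_h)$ the pressure cross-terms $-(p_h,\nabla\cdot\ww_h)+(p_h,\nabla\cdot\ww_h)$ cancel, so only the symmetric part
$$\sigma\|\ww_h\|_{0,\OO}^2+\mu|\ww_h|_{1,\OO}^2+\rho((\nabla\uu_m)\ww_h,\ww_h)+\lambda\|\nabla\cdot\ww_h\|_{0,\OO}^2$$
remains. For $\Sconv$, I would apply Lemma~\ref{trilinearprop} with $\balpha=\aaa_h+\uu_m$ to the trilinear form $((\nabla\ww_h)(\aaa_h+\uu_m),\ww_h)$; this gives $((\nabla\ww_h)(\aaa_h+\uu_m),\ww_h)=-\tfrac12((\nabla\cdot(\aaa_h+\uu_m))\ww_h,\ww_h)$, so that the $\tfrac{\rho}{2}(\nabla\cdot\aaa_h)\ww_h$ term added to $\Sconv$ was precisely designed to cancel the $\aaa_h$-divergence contribution, leaving only $-\tfrac{\rho}{2}((\nabla\cdot\uu_m)\ww_h,\ww_h)$.

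Adding $A+\Sconv$, the two $\uu_m$-terms are then bounded in $L^\infty$ exactly as in the proof of Theorem~\ref{th1}: using $\|\nabla\cdot\uu_m\|_{\infty,\OO}\le d\,\|\nabla\uu_m\|_{\infty,\OO}$ and the hypothesis \eqref{assum1d}, I would obtain
$$A(\ww_h,p_h;\ww_h,p_h)+\Sconv(\ww_h;\ww_h)\ge \tfrac{\sigma}{2}\|\ww_h\|_{0,\OO}^2+\mu|\ww_h|_{1,\OO}^2+\lambda\|\nabla\cdot\ww_h\|_{0,\OO}^2.$$

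Next, for $\Spress$ I would exploit the algebraic identity $(X+Y,X-Y)=\|X\|^2-\|Y\|^2$ with $X=\LLh(\ww_h,p_h)$ and $Y=\sigma\ww_h-\mu\Delta\ww_h$, obtaining
$$\Spress(\ww_h,p_h;\ww_h,p_h)=\sum_{T\in\T_h}\tau_T\|\LLh(\ww_h,p_h)\|_{0,T}^2-\sum_{T\in\T_h}\tau_T\|\sigma\ww_h-\mu\Delta\ww_h\|_{0,T}^2.$$
The first sum is exactly the last piece of $\nh{(\ww_h,p_h)}^2$, while the second (negative) sum is what must be absorbed, and it is the main technical obstacle of the proof. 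I would bound it by expanding $\|\sigma\ww_h-\mu\Delta\ww_h\|_{0,T}^2\le 2\sigma^2\|\ww_h\|_{0,T}^2+2\mu^2\|\Delta\ww_h\|_{0,T}^2$, applying the inverse inequality \eqref{Cinv} to the Laplacian and using the two scalings \eqref{prop_tau1}--\eqref{prop_tau2} of $\tau_T$; this yields
$$\sum_{T\in\T_h}\tau_T\|\sigma\ww_h-\mu\Delta\ww_h\|_{0,T}^2\le 2\delta\,\sigma\|\ww_h\|_{0,\OO}^2+2\delta\,C_{\mathrm{inv}}^2\,\mu|\ww_h|_{1,\OO}^2.$$

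Finally, the smallness condition \eqref{hip3-lemma3}, which gives $2\delta\le\tfrac14$ and $2\delta C_{\mathrm{inv}}^2\le\tfrac12$, is exactly what is needed so that these two terms are absorbed by $\tfrac{\sigma}{2}\|\ww_h\|_{0,\OO}^2$ and $\mu|\ww_h|_{1,\OO}^2$ coming from $A+\Sconv$, leaving a residual of at least $\tfrac{\sigma}{4}\|\ww_h\|_{0,\OO}^2+\tfrac{\mu}{2}|\ww_h|_{1,\OO}^2+\lambda\|\nabla\cdot\ww_h\|_{0,\OO}^2+\sum_T\tau_T\|\LLh(\ww_h,p_h)\|_{0,T}^2$, which clearly bounds $\tfrac14\nh{(\ww_h,p_h)}^2$ from above. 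The only delicate issue is the interplay between $\delta$ and $C_{\mathrm{inv}}$ in the absorption, which explains why the restriction on $\delta$ involves precisely these two constants.
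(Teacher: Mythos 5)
Your proposal is correct and follows essentially the same route as the paper: cancellation of the pressure terms in $A$, skew-symmetrization of the convective term via Lemma~\ref{trilinearprop} so that only $-\tfrac{\rho}{2}((\nabla\cdot\uu_m)\ww_h,\ww_h)$ survives and is controlled through \eqref{assum1d}, the identity $\|\LLh(\ww_h,p_h)\|^2-\|\sigma\ww_h-\mu\Delta\ww_h\|^2$ for $\Spress$, and absorption of the negative part using \eqref{prop_tau1}--\eqref{prop_tau2}, the inverse inequality \eqref{Cinv} and the restriction \eqref{hip3-lemma3}, yielding exactly the paper's bound $\tfrac{\sigma}{4}\|\ww_h\|_{0,\OO}^2+\tfrac{\mu}{2}|\ww_h|_{1,\OO}^2+\lambda\|\nabla\cdot\ww_h\|_{0,\OO}^2+\sum_T\tau_T\|\LLh(\ww_h,p_h)\|_{0,T}^2\ge\tfrac14\nh{(\ww_h,p_h)}^2$.
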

	
	%
	% Proof
	%
	
	\begin{proof}
Similar to the proof of Theorem \ref{th1}, due to the definition of bilinear forms $A(\cdot;\cdot)$ and 
$\Sconv(\cdot;\cdot)$, assumption \eqref{assum1d} and Lemma \ref{trilinearprop}, we have for all $(\ww_h,p_h)\in \HH_h\times Q_h$ that
\begin{align}
	\nonumber
	&\, A(\ww_h,p_h;\ww_h,p_h)  +\Sconv(\ww_h;\ww_h)  \\ \nonumber
	=&\, 
	\sigma\, \|\ww_h\|_{0,\OO}^2 +  \mu \,| \ww_h|_{1,\OO}^2  +
	\rho\, ((\nabla \uu_{m})\ww_h,\ww_h) +
	\lambda\, \|\nabla \cdot \ww_h\|_{0,\OO}^2 +
	\rho\, ((\nabla \ww_h)(\aaa_h+\uu_{m}),\ww_h) \\
	& + \dfrac{\rho}{2} \, \left( (\nabla \cdot \aaa_h)\, \ww_h , \ww_h\right)	\label{eq5:3.9} \\ 
	\nonumber
	\ge&\, \left(\sigma-\rho\, \|\nabla \uu_m\|_{\infty,\OO} - \dfrac{\rho}{2}\, \| \nabla \cdot \uu_m\|_{\infty,\OO}\right)\, \|\ww_h\|_{0,\OO}^2 +
	\mu \, |\ww_h|_{1,\OO}^2 +\lambda\, \|\nabla \cdot \ww_h\|_{0,\OO}^2 
	\\
	\nonumber
	\ge&\, \left(\sigma-2\rho\, \|\nabla \uu_m\|_{\infty,\OO}\right)\, \|\ww_h\|_{0,\OO}^2 +\mu \, |\ww_h|_{1,\OO}^2 + \lambda\, \|\nabla \cdot \ww_h\|_{0,\OO}^2  \\
	\ge&\, \dfrac{\sigma}{2}\, \|\ww_h\|_{0,\OO}^2 +\mu \, |\ww_h|_{1,\OO}^2 + \lambda\, \|\nabla \cdot \ww_h\|_{0,\OO}^2 .
\nonumber
\end{align}
Now, using the definition of $\Spress(\cdot;\cdot)$, Young's inequality, \eqref{prop_tau1}-\eqref{prop_tau2} and \eqref{Cinv}, we obtain
\begin{align}
	\nonumber
	\small \Spress(\ww_h,p_h;\ww_h,q_h) 
	=&\, \small\sum_{T \in \T_h}  \tau_T \, \Big(
	\sigma\, \ww_h  - \mu \, \Delta \ww_h + \LLh(\ww_h,p_h),
	\, - \sigma\, \ww_h +   \mu \, \Delta \ww_h + \LLh(\ww_h,q_h)
	\Big)_{T}\\
	\nonumber
	=&\, \small \sum_{T \in \T_h}  \tau_T \, \left\{
	-\|\sigma\, \ww_h  - \mu \, \Delta \ww_h \|_{0,T}^2+ \|\LLh(\ww_h,p_h)\|_{0,T}^2
	\right\}\\
	\nonumber
	=&\, \small \sum_{T \in \T_h}  \tau_T \, \left\{
	-\sigma^2\,\| \ww_h \|_{0,T}^2+2\sigma \mu\,(\ww_h,\Delta \ww_h)_T- \mu^2\,\|\Delta\ww_h \|_{0,T}^2+ \|\LLh(\ww_h,p_h)\|_{0,T}^2
	\right\}\\
	\nonumber
	\ge&\,\small \sum_{T \in \T_h}  \tau_T \, \left\{
	-2\sigma^2\,\| \ww_h \|_{0,T}^2-2 \mu^2\,\|\Delta\ww_h \|_{0,T}^2+ \|\LLh(\ww_h,p_h)\|_{0,T}^2
	\right\}\\
	%
	%
	%
	%\nonumber
	%\ge&\, \, 
	%-2\sum_{T \in \T_h}  \sigma^2\tau_T \,  \| \ww_h \|_{0,T}^2
	%-2 \sum_{T \in \T_h}  \mu^2 \tau_T \,\|\Delta\ww_h \|_{0,T}^2
	%+\sum_{T \in \T_h}  \tau_T \,  \|\LLh(\ww_h,p_h)\|_{0,T}^2\\
	%
	%
	%
	\nonumber
	\ge&\, \, \small
	-2\sum_{T \in \T_h}  \sigma\delta \,  \| \ww_h \|_{0,T}^2
	-2 \sum_{T \in \T_h}  \mu \delta \Cinv^2\,|\ww_h |_{1,T}^2
	+\sum_{T \in \T_h}  \tau_T \,  \|\LLh(\ww_h,p_h)\|_{0,T}^2\\
	\ge&\, \, \small
	-\dfrac{\sigma}{4} \,  \| \ww_h \|_{0,\OO}^2
	-\dfrac{\mu}{2}\,|\ww_h |_{1,\OO}^2
	+\sum_{T \in \T_h}  \tau_T \,  \|\LLh(\ww_h,p_h)\|_{0,T}^2 , \label{eq5:5}
\end{align}
for all $(\ww_h,p_h) \in  \HH_h\times Q_h$. \\
Finally,  inserting \eqref{eq5:3.9}  and  \eqref{eq5:5} in the definition of {$\B(\cdot;\cdot)$} (cf. \eqref{def-form-Bstab}),
we get for all $(\ww_h,p_h) \in  \HH_h\times Q_h$ that
\begin{equation}
	\label{eq:3.3}
	\B(\ww_h,p_h;\ww_h,p_h) \ge \dfrac{ \sigma}{4}\, \|\ww_h\|_{0,\OO}^2+\dfrac{\mu}{2}\, |\ww_h|_{1,\OO}^2 
	+\lambda \, \| \nabla\cdot \ww_h\|_{0,\OO}^2
	+  \displaystyle \sum_{T \in \T_h}  \tau_T \,
	\|  \LLh(\ww_h,p_h) \|^2_{0,T}   ,
\end{equation}
and the result follows.
\end{proof}

%\textcolor{red}{En esta pasada salto casi directamente a los ejemplos num\'ericos!!}
\subsection{Error analysis}\label{convergence}

In what follows we will assume that the convective velocity field $\aaa_h$ in $\HH_h$ used to approximate  the  given convective velocity field 
$\aaa\in H^{k+1}(\OO)^d$, has the follow approximate property: There is a  positive constant $C$  independent of $h$, such that
{
\begin{align}
	\label{prop_aaa_1}
	\| \aaa-\aaa_h\|_{m,p,\OO}\le\, C\, h^{\ell+1-m}\|\aaa\|_{\ell+1,p,\OO}\quad,\quad
	{\|\aaa-\aaa_h\|_{m,\infty,\OO}\le\, C\, h^{\ell+1-m-d/2}\|\aaa\|_{\ell+1,\OO}},
\end{align}
for $0\le  m\le 1, 1\le p\le \infty$, and $0\le\ell\le k$.}
{As it was mentioned earlier, the stabilized scheme \eqref{stab} is not consistent.
The following result shows that the consistency error can be bounded in an optimal way.}
\begin{lemma} 
\label{pseudo-orthogo}
Let $(\ww,p) \in \big[\HH\cap H^{2}(\OO)^d\big] \times \big[Q\cap H^1(\OO)\big]$ and $(\ww_h,p_h)\in \HH_h \times Q_h$ 
be the solutions of (P) and \eqref{stab}, respectively. Assume the approximation assumption \eqref{prop_aaa_1} and the hypotheses of %Theorem \ref{theo_infsup_discrete}. 
Lemma \ref{coercivity_lemma}.
Then, for all $(\vv_h,q_h)\in \HH_{h} \times Q_{h}$ we have
\begin{align}
	\nonumber
	%&\, 
	\B(\ww-\ww_h,p-p_h;\vv_h,q_h) 
	=&\, 
	\dfrac{\rho}{2} \, ( \nabla\cdot(\aaa_h-\aaa)\ww,\vv_h)+ 
	\rho\, ( \nabla \ww(\aaa_h-\aaa),\vv_h)+
	\\ 
	&\,
	\sum_{T \in \T_h}  \tau_T \,
	\Bigg( 
	\rho\, (\nabla \ww)(\aaa_h-\aaa) ,
	-\sigma\, \vv_h +   \mu \, \Delta \vv_h + \LLh(\vv_h,q_h) 
	\Bigg)_T. \label{pseudo-ortho-1}
\end{align}
Moreover,  there is a positive constant $C$, independent of $h$, such that
\begin{align}
	\B(\ww-\ww_h,p-p_h;\vv_h,q_h) 
	\nonumber \le&\, C \, \sigma^{-1/2}
	\rho\, h^k \, %\Big[
	% \| \aaa\|_{k+1,\infty,\OO}
	% +
	\| \aaa\|_{k+1,\OO}
	%\Big]
	\|\ww\|_{2,\OO}\nh{(\vv_h,q_h)},
\end{align}
for all $(\vv_h,q_h)\in \HH_{h} \times Q_{h}$.
\end{lemma}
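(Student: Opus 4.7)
My plan is to split the proof into two stages. First I would establish the consistency identity \eqref{pseudo-ortho-1} by plugging the exact pair $(\ww,p)$ into $\B(\cdot,\cdot;\vv_h,q_h)$, using the strong form of $(\textrm{P})$ together with the Galerkin equation $\B(\ww_h,p_h;\vv_h,q_h)=\F(\vv_h,q_h)$; then I would estimate each of the three right-hand side contributions using H\"older inequalities, Sobolev embeddings, the stated approximation property \eqref{prop_aaa_1}, and the definition of the triple norm.

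For the identity, I would process $A(\ww,p;\vv_h,q_h)$ by integrating by parts (using $\ww\in H^2(\OO)^d$ and $\vv_h=\zero$ on $\partial\OO$) to rewrite $\mu(\nabla\ww,\nabla\vv_h)-(p,\nabla\cdot\vv_h)$ as $-\mu(\Delta\ww,\vv_h)+(\nabla p,\vv_h)$, and use $\nabla\cdot\ww=g$ to match the $g$-contributions of $\F$. Adding $\Sconv(\ww;\vv_h)$ and noting that $\nabla\cdot\aaa_h=\nabla\cdot(\aaa_h-\aaa)$ (because $\aaa$ is solenoidal), the strong form yields $(\ff,\vv_h)$ plus exactly the two residual terms $\rho((\nabla\ww)(\aaa_h-\aaa),\vv_h)$ and $\tfrac{\rho}{2}((\nabla\cdot(\aaa_h-\aaa))\ww,\vv_h)$. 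For $\Spress$, the analogous substitution gives $\sigma\ww-\mu\Delta\ww+\LLh(\ww,p)=\ff+\rho(\nabla\ww)(\aaa_h-\aaa)$, which cleanly separates a $\F$-type contribution from the residual stabilization piece. Subtracting $\F(\vv_h,q_h)$ and invoking the discrete equation delivers \eqref{pseudo-ortho-1}.

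For the upper bound I would treat the three terms separately. The first is handled by Cauchy--Schwarz together with the Sobolev embedding $H^2(\OO)\hookrightarrow L^\infty(\OO)$ (valid for $d\le 3$), the approximation estimate $\|\aaa_h-\aaa\|_{1,\OO}\le Ch^k\|\aaa\|_{k+1,\OO}$, and $\|\vv_h\|_{0,\OO}\le\sigma^{-1/2}\nh{(\vv_h,q_h)}$. For the second term I would apply H\"older with exponents $(3,6,2)$, bounding $\rho((\nabla\ww)(\aaa_h-\aaa),\vv_h)$ by $\rho\|\nabla\ww\|_{L^3(\OO)}\|\aaa_h-\aaa\|_{L^6(\OO)}\|\vv_h\|_{0,\OO}$, with the first factor controlled by $C\|\ww\|_{2,\OO}$ via $H^2\hookrightarrow W^{1,3}$ and the second by $Ch^k\|\aaa\|_{k+1,\OO}$ via $H^1\hookrightarrow L^6$ combined with \eqref{prop_aaa_1} (taking $m=1$). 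For the third term I would apply a weighted Cauchy--Schwarz in the element sum: the ``test'' factor is bounded by $C\nh{(\vv_h,q_h)}$ after using \eqref{prop_tau1}--\eqref{prop_tau2} to absorb the $-\sigma\vv_h$ and $\mu\Delta\vv_h$ contributions (the latter via the inverse estimate \eqref{Cinv}) and reading $\LLh(\vv_h,q_h)$ straight off the triple norm, while the ``trial'' factor is bounded exactly as in the second term after using $\tau_T\le\delta/\sigma$.

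The main obstacle is attaining the optimal $h^k$ rate for the second residual and the trial factor of the third: a direct estimate via $\|\aaa_h-\aaa\|_{\infty,\OO}$ would, by \eqref{prop_aaa_1}, only deliver $h^{k+1-d/2}$, which degenerates to a sub-optimal $h^{k-1/2}$ in three dimensions. The key manoeuvre is to trade the $L^\infty$ norm on $\aaa_h-\aaa$ for an $L^6$ norm (recovering the full $h^k$ order) and compensate by measuring $\nabla\ww$ in $L^3$, a trade that is permissible precisely because $\ww\in H^2(\OO)^d$.
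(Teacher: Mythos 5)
Your proposal is correct and follows essentially the same route as the paper: the identity is obtained from the weak/strong form of (P), the Galerkin equation and $\nabla\cdot\aaa=0$ (so $\nabla\cdot\aaa_h=\nabla\cdot(\aaa_h-\aaa)$ and $\sigma\ww-\mu\Delta\ww+\LLh(\ww,p)-\ff=\rho(\nabla\ww)(\aaa_h-\aaa)$), and the bound is obtained exactly as in the paper, with $\|\vv_h\|_{0,\OO}\le\sigma^{-1/2}\nh{(\vv_h,q_h)}$, the bound $\sum_T\tau_T\|\sigma\vv_h-\mu\Delta\vv_h+\LLh(\vv_h,q_h)\|_{0,T}^2\le C\nh{(\vv_h,q_h)}^2$ via \eqref{prop_tau1}--\eqref{prop_tau2} and \eqref{Cinv}, $\tau_T\le\delta/\sigma$, and H\"older/Sobolev arguments reducing everything to $\|\aaa-\aaa_h\|_{1,\OO}\le Ch^k\|\aaa\|_{k+1,\OO}$ and $\|\ww\|_{2,\OO}$ (your $L^3$--$L^6$ trade is the same device the paper uses implicitly to avoid the suboptimal $L^\infty$ bound).
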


\begin{proof}
Using the definition of bilinear  form {$\B(\cdot;\cdot)$} (cf. \eqref{def-form-Bstab}), \eqref{vf1}-\eqref{vf2}, \eqref{stab}, 
and the fact that $\aaa$ is solenoidal, we have
\begin{align}
	\nonumber
	&\,\small \B(\ww-\ww_h,p-p_h;\vv_h,q_h) 
	\\ \nonumber
	=&\,\small A(\ww-\ww_h,p-p_h;\vv_h,q_h) + \Sconv(\ww-\ww_h;\vv_h)+\Spress(\ww-\ww_h,p-p_h;\vv_h,q_h) \\ \nonumber
	=&\,\small (\ff,\vv_h) +(g,q_h)+ \lambda\, (g,\nabla \cdot \vv_h)+
	\rho\, ( \nabla \ww(\aaa_h-\aaa),\vv_h)+
	\dfrac{\rho}{2} \, ( (\nabla\cdot \aaa_h)\ww,\vv_h)  +{\Spress(\ww,p;\vv_h,q_h)} \\ \nonumber
	&\,\small -\,A(\ww_h,p_h;\vv_h,q_h) - \Sconv(\ww_h;\vv_h)-\Spress(\ww_h,p_h;\vv_h,q_h) 
	\\ \nonumber
	=&\, \small
	\dfrac{\rho}{2} \, ( (\nabla\cdot \aaa_h)\ww,\vv_h)+ 
	\rho\, ( \nabla \ww(\aaa_h-\aaa),\vv_h)+\\ \nonumber
	&\,  \sum_{T \in \T_h}  \tau_T \, \Big(
	\sigma\, \ww  - \mu \, \Delta \ww + \LLh(\ww,p)-\ff,
	\, - \sigma\, \vv_h +   \mu \, \Delta \vv_h + \LLh(\vv_h,q_h)
	\Big)_{T}
	%
	% Cristian dice que omitamos la siguiente linea (6/1/26)
	%
	\\
	% \nonumber =&\, 
	% \dfrac{\rho}{2} \, ( (\nabla\cdot \aaa_h)\ww,\vv_h)+ 
	% \rho\, ( \nabla \ww(\aaa_h-\aaa),\vv_h)+
	% \\ \nonumber
	%  &\, \sum_{T \in \T_h}  \tau_T \,
	%\Bigg( 
	% \rho\, (\nabla \ww)(\aaa_h-\aaa) ,
	% -\sigma\, \vv_h +   \mu \, \Delta \vv_h + \LLh(\vv_h,q_h) 
	%\Bigg)_T \\
	%
	%
	%
	%
	\nonumber =&\, \small
	\dfrac{\rho}{2} \, ( \nabla\cdot(\aaa_h-\aaa)\ww,\vv_h)+ 
	\rho\, ( \nabla \ww(\aaa_h-\aaa),\vv_h)+
	\\ 
	&\,\small \sum_{T \in \T_h}  \tau_T \,
	\Bigg( 
	\rho\, (\nabla \ww)(\aaa_h-\aaa) ,
	-\sigma\, \vv_h +   \mu \, \Delta \vv_h + \LLh(\vv_h,q_h) 
	\Bigg)_T,  \nonumber %\label{pseudo-ortho-1}
\end{align}
for all $(\vv_h,q_h)\in \HH_{h} \times Q_{h}$.\\

On the other hand, using  \eqref{prop_tau1}-\eqref{prop_tau2} and Lemma \ref{Cinv}, we can see that
\begin{align}
	\nonumber
	%\le &\,C\,
	&\sum_{T \in \T_h}  \tau_T
	\, \left\| \sigma\, \vv_h - \mu\, \Delta \vv_h + \LLh(\vv_h,q_h)  \right\|_{0,T}^2  	\nonumber\\
	%
	%
	%
	%\nonumber
	& \nonumber \qquad \le \,C\,  \sum_{T \in \T_h}  \tau_T
	\Big[ 
	\sigma^2\, \|\vv_h\|_{0,T}^2 + \mu^2\, \| \Delta \vv_h\|_{0,T}^2+\|\LLh(\vv_h,q_h)\|_{0,T}^2\Big]\\
	\nonumber
	&	\qquad \le\, C\, \sum_{T \in \T_h}  
	\Big\{
	\sigma^2\tau_T\, \|\vv_h\|_{0,T}^2 + \mu^2 \tau_T h_T^{-2}\, | \vv_h|_{1,T}^2+\tau_T\, \|\LLh(\vv_h,q_h)\|_{0,T}^2\Big\}\\
	\nonumber
  &	\qquad	\le \, C\, \sum_{T \in \T_h}  
	\Big\{
	\sigma\, \|\vv_h\|_{0,T}^2 + \mu\, | \vv_h|_{1,T}^2+\tau_T\, \|\LLh(\vv_h,q_h)\|_{0,T}^2\Big\}\\
 &	\qquad	\le \, C\, \nh{(\vv_h,q_h)}^2. \label{pseudo-ortho-2}
\end{align}

Finally, using H\"older inequality, the fact that $\|\vv_h\|_{0,\OO} \le \sigma^{-1/2}\, \nh{(\vv_h,q_h)}$ and the continuous embedding  $H^2(\Omega)\xhookrightarrow{ } L^{\infty}(\Omega)$, 
\eqref{pseudo-ortho-2}, \eqref{prop_aaa_1}, we can see that 
\begin{align}
	\nonumber
	&\, \B(\ww-\ww_h,p-p_h;\vv_h,q_h) \\
	\nonumber \le&\, C
	\Bigg\{
	\rho\, \| \nabla \cdot(\aaa-\aaa_h)\|_{0,\OO}\|\ww\|_{\infty,\OO}\|\vv_h\|_{0,\OO}+
	\rho\, \| \aaa-\aaa_h\|_{1,\OO}\|\ww\|_{2,\OO}\|\vv_h\|_{0,\OO}
	+\\ \nonumber
	&\, \rho \, \sum_{T \in \T_h}  \tau_T \,
	\|\aaa-\aaa_h\|_{1,T}  \|\ww\|_{2,T}
	%\Big[ \sigma\, \|\vv_h\|_{0,T} + \mu \, \| \Delta \vv_h\|_{0,T}+\|\LLh(\vv_h,q_h)\|_{0,T}\Big]
	\left\| \sigma\, \vv_h - \mu\, \Delta \vv_h + \LLh(\vv_h,q_h)  \right\|_{0,T}
	\Bigg\}
	\nonumber\\
	\nonumber \le&\, C
	\rho\, %\Bigg[
	%\| \nabla \cdot(\aaa-\aaa_h)\|_{\infty,\OO}
	% +
	\| \aaa-\aaa_h\|_{1,\OO}
	%\Bigg]
	\|\ww\|_{2,\OO}\|\vv_h\|_{0,\OO}+\\ \nonumber
	&\, C\, \rho \,\|\ww\|_{2,\OO}\, \Bigg\{ \sum_{T \in \T_h}  \tau_T \,
	\|\aaa-\aaa_h\|_{1,T}^2\Bigg\}^{1/2}%\times \\ \nonumber
	%&\, 
	\Bigg\{ \sum_{T \in \T_h}  \tau_T
	% \Big[ \sigma^2\, \|\vv_h\|_{0,T}^2 + \mu^2 h\, \| \Delta \vv_h\|_{0,T}^2+\|\LLh(\vv_h,q_h)\|_{0,T}^2\Big]
	\left\| \sigma\, \vv_h - \mu\, \Delta \vv_h + \LLh(\vv_h,q_h)  \right\|_{0,T}^2
	\Bigg\}^{1/2}\\
	\nonumber \le&\, C \sigma^{-1/2}
	\rho\, %\Bigg[
	% \|\nabla \cdot \uu_m\|_{\infty,\OO}
	% +
	% \| \nabla \cdot(\aaa-\aaa_h)\|_{\infty,\OO}
	% +
	\| \aaa-\aaa_h\|_{1,\OO}
	%\Bigg]
	\|\ww\|_{2,\OO}\nh{(\vv_h,q_h)} \\ \nonumber
	\nonumber \le&\, C \sigma^{-1/2}
	\rho\, h^k\, %\Big[
	%\| \aaa\|_{k+1,\infty,\OO}
	% +
	\| \aaa\|_{k+1,\OO}
	%\Big]
	\|\ww\|_{2,\OO}\nh{(\vv_h,q_h)} ,
\end{align}
and we conclude the proof.
\end{proof}

To estimate the velocity error,  we use the Lagrange interpolant 
$\II_h: H^{k+1}(\OO)^d \longrightarrow \HH_h$, which satisfies: For all $T \in \TT_h$
\begin{equation}
|\ww - \II_h\ww |_{l,T} \le C h_T^{s-l}\,|\ww|_{s,T},
\label{LAGRANGE1}
\end{equation}
for all $\ww \in H^{s}(T)^d$ with $l=0,1$ and  $\frac{d}{2} < s\le k+1$.  To approximate the  pressure  we consider the Cl\'ement interpolant $\mathcal{J}_h: H^{k}(\OO) \longrightarrow Q_h$, which satisfies: For all $T \in \TT_h$
\begin{align}
\label{LAGRANGE2}
|p-\mathcal{J}_hp|_{l,T} \le&\,  C h_T^{s-l}\, |p|_{s,\tilde{\omega}_T}  ,\\
\label{eq:5.38}
\|\mathcal{J}_h p \|_{1,\OO} \leq&\,  C\|p\|_{1,\OO},
\end{align}
for all $p \in H^{s}(T)$ with $l=0,1$ and  $ 1 \le s\le k$. where $\tilde{\omega}_T:=\cup\{ T'\in\TT_h:T\cap T'\not=\emptyset\}$.  For details in both interpolants and their analysis see \cite{BS08,EG21-FE1}.

{Now, we present the main result regarding error analysis for Method~\eqref{stab}.}

%The following results ensure the convergence of the proposed stabilized finite element method \eqref{stab}.

\begin{theorem}
\label{convergencia}
Assume  the assumptions of Lemmas \ref{coercivity_lemma} and \ref{pseudo-orthogo}.
Let $(\ww,p) \in \Big[\HH\cap H^{k+1}(\OO)^d \Big]\times \Big[Q\cap H^k(\OO)\Big]$ and $(\ww_h,p_h)\in \HH_h \times Q_h$ 
be the solutions of (P) and \eqref{stab}, respectively.
Then, there is a positive constant $C$, independent of $h$ and every physical 
parameter of the problem, such that
\begin{equation}\label{main-estimate} 
	\nh{(\ww-\ww_h,p-p_h)} \le C\,
	h^k\, \Bigg\{ 
	M_1 \,   \|\ww\|_{k+1,\OO}+
	M_2 \, 
	\|p\|_{k,\OO}
	\Bigg\},
\end{equation}
where 
\begin{equation}
	\label{M1}
	M_1 \defi  C_1 +
	\sigma^{-1/2}
	\rho\, \Big[  
	{h\, \|\nabla \cdot \uu_{m}\|_{\infty,\OO} + h^{2-d/2}\|\aaa\|_{2,\Omega}}+ 
	\|\aaa\|_{k+1,\OO}
	\Big],
\end{equation}
with
\begin{equation}
	\label{C1}
	{C_1 \defi  \sigma^{1/2}\,h+ \mu^{1/2}+ \lambda^{1/2} +
		\sigma^{-1/2}
		\rho\,  \big( \|\aaa \|_{\infty,\OO}+\|\uu_{m} \|_{\infty,\OO}\big) , }
\end{equation}
and
\begin{equation}
	\label{M2}
	M_2 \defi  \lambda^{-1/2} +\mu^{-1/2}.
\end{equation}

\end{theorem}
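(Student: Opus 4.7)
The approach is the classical abstract--error argument adapted to this non-consistent stabilized scheme: decompose the error with the interpolants $\II_h$ and $\JJJ_h$, control the discrete part with the coercivity of $\B(\cdot,\cdot)$ combined with the pseudo--Galerkin identity of Lemma~\ref{pseudo-orthogo}, and estimate the interpolation part directly via \eqref{LAGRANGE1}--\eqref{eq:5.38}.

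\emph{Step 1 (Reduction to a discrete error).} I would split
\[
\ww-\ww_h=\boldsymbol{\eta}^{\ww}+\boldsymbol{\xi}_h,\qquad p-p_h=\eta^p+\zeta_h,
\]
with $\boldsymbol{\eta}^{\ww}\defi\ww-\II_h\ww$, $\eta^p\defi p-\JJJ_h p$ and $(\boldsymbol{\xi}_h,\zeta_h)\defi(\II_h\ww-\ww_h,\JJJ_h p-p_h)\in\HH_h\times Q_h$. By the triangle inequality and \eqref{LAGRANGE1}--\eqref{eq:5.38}, the interpolation part $\nh{(\boldsymbol{\eta}^{\ww},\eta^p)}$ is already of the order claimed in \eqref{main-estimate}, so the core task is to estimate $\nh{(\boldsymbol{\xi}_h,\zeta_h)}$. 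Invoking Lemma~\ref{coercivity_lemma} on this discrete error and using
\[
\B(\boldsymbol{\xi}_h,\zeta_h;\boldsymbol{\xi}_h,\zeta_h)=\B(\ww-\ww_h,p-p_h;\boldsymbol{\xi}_h,\zeta_h)-\B(\boldsymbol{\eta}^{\ww},\eta^p;\boldsymbol{\xi}_h,\zeta_h),
\]
Lemma~\ref{pseudo-orthogo} identifies the first term on the right with the explicit consistency residual, already bounded by $C\sigma^{-1/2}\rho\,h^{k}\|\aaa\|_{k+1,\OO}\|\ww\|_{2,\OO}\nh{(\boldsymbol{\xi}_h,\zeta_h)}$; this produces the $\sigma^{-1/2}\rho\|\aaa\|_{k+1,\OO}$ summand of $M_1$.

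\emph{Step 2 (Bounding $\B(\boldsymbol{\eta}^{\ww},\eta^p;\boldsymbol{\xi}_h,\zeta_h)$).} For every contribution in $A(\cdot,\cdot)$ and $\Sconv(\cdot;\cdot)$ I would apply Cauchy--Schwarz, pair the interpolation factor with the appropriate component of $\nh{(\boldsymbol{\xi}_h,\zeta_h)}$, and invoke \eqref{LAGRANGE1}--\eqref{eq:5.38} together with \eqref{prop_aaa_1}. The mass term yields the coefficient $\sigma^{1/2}h$, the diffusion term $\mu^{1/2}$, the grad--div term $\lambda^{1/2}$; the $\rho(\nabla\uu_m)\ww$ term produces $\sigma^{-1/2}\rho\,h\|\nabla\cdot\uu_m\|_{\infty,\OO}$; and the convective part of $\Sconv$ delivers $\sigma^{-1/2}\rho(\|\aaa\|_{\infty,\OO}+\|\uu_m\|_{\infty,\OO})$, where the discrepancy $\aaa-\aaa_h$ is controlled in $L^{\infty}$ via the second bound of \eqref{prop_aaa_1}, giving the $h^{2-d/2}\|\aaa\|_{2,\OO}$ summand in $M_1$. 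The pressure coupling $(\eta^p,\nabla\cdot\boldsymbol{\xi}_h)$ can be absorbed against either $\mu^{1/2}|\boldsymbol{\xi}_h|_{1,\OO}$ or $\lambda^{1/2}\|\nabla\cdot\boldsymbol{\xi}_h\|_{0,\OO}$, accounting for the two summands $\mu^{-1/2}+\lambda^{-1/2}$ in $M_2$.

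\emph{Step 3 (The main obstacle: the stabilization term).} The most delicate piece is the estimate of $\Spress(\boldsymbol{\eta}^{\ww},\eta^p;\boldsymbol{\xi}_h,\zeta_h)$. After an elementwise Cauchy--Schwarz that isolates $\tau_T^{1/2}\|\LLh(\boldsymbol{\xi}_h,\zeta_h)\|_{0,T}$ inside $\nh{(\boldsymbol{\xi}_h,\zeta_h)}$ (exactly as in the computation leading to \eqref{pseudo-ortho-2}), each of the pieces $\sigma\boldsymbol{\eta}^{\ww}$, $\mu\Delta\boldsymbol{\eta}^{\ww}$, $\rho(\nabla\uu_m)\boldsymbol{\eta}^{\ww}$, $\rho(\nabla\boldsymbol{\eta}^{\ww})(\aaa_h+\uu_m)$ and $\nabla\eta^p$ must be scaled correctly using the bounds \eqref{prop_tau1}--\eqref{prop_tau2} on $\tau_T$, the inverse estimate \eqref{Cinv} applied to $\Delta\boldsymbol{\xi}_h$, and local interpolation estimates. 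This is what guarantees that no new constants arise beyond those already present in $C_1$, $M_1$ and $M_2$: in particular, no negative powers of $h$, $\mu$ or $\sigma$ appear. Collecting every bound, dividing through by $\nh{(\boldsymbol{\xi}_h,\zeta_h)}$, and adding back the interpolation error via the triangle inequality yields \eqref{main-estimate}.
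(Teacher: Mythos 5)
Your overall strategy is indeed the paper's: split the error with the interpolants, apply the coercivity of Lemma~\ref{coercivity_lemma} to the discrete part, use Lemma~\ref{pseudo-orthogo} for the consistency residual, bound the interpolation part of $\B$ term by term, and finish with the triangle inequality. However, there is a genuine gap in your Step~2: you never address the term $(\zeta_h,\nabla\cdot\boldsymbol{\eta}^{\ww})$ coming from $b(\etw,q_h)$, i.e.\ the discrete pressure tested against the divergence of the velocity interpolation error, and this term cannot be handled by your advertised recipe of ``Cauchy--Schwarz and pair with the appropriate component of $\nh{(\boldsymbol{\xi}_h,\zeta_h)}$'': the triple norm contains no $L^2$ (nor weighted $H^1$) control of $\zeta_h$ on its own, since the discrete pressure enters only inside $\tau_T^{1/2}\|\LLh(\boldsymbol{\xi}_h,\zeta_h)\|_{0,T}$. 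The paper's proof resolves precisely this point (the estimate of $I_2$) by integrating by parts both $(\zeta_h,\nabla\cdot\etw)$ and the convective term $\rho((\nabla\etw)(\aaa_h+\uu_m),\vv_h)$, regrouping via Lemma~\ref{trilinearprop} so that $\nabla\zeta_h$ appears only through $\LLh(\boldsymbol{\xi}_h,\zeta_h)$ after adding and subtracting $\rho(\nabla\uu_m)\boldsymbol{\xi}_h$, and then pairing $\tau_T^{-1/2}\|\etw\|_{0,T}$ with $\tau_T^{1/2}\|\LLh(\boldsymbol{\xi}_h,\zeta_h)\|_{0,T}$ using $\tau_T^{-1}\le C(\sigma+\mu h_T^{-2})$.

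This maneuver is not a routine detail: it is where several of the stated constants actually originate, namely the $\sigma^{1/2}h+\mu^{1/2}$ part of $C_1$ for this term and the summand $\sigma^{-1/2}\rho\,h\big(\|\nabla\cdot\uu_m\|_{\infty,\OO}+h^{1-d/2}\|\aaa\|_{2,\OO}\big)$ of $M_1$, the latter through $\nabla\cdot(\aaa_h-\aaa)$ and the $L^{\infty}$ bound in \eqref{prop_aaa_1}. Your attribution of these coefficients to other terms (the $\rho(\nabla\uu_m)\ww$ term, which under \eqref{assum1d} is simply absorbed into $\sigma^{1/2}h$ and in any case involves $\|\nabla\uu_m\|_{\infty,\OO}$ rather than $\|\nabla\cdot\uu_m\|_{\infty,\OO}$, and the $L^{\infty}$ control of $\aaa_h$ in $\Sconv$) indicates the missing step is not implicit in your plan. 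Moreover the grouping matters: if you integrate by parts only the pressure term and reconstruct $\LLh$ afterwards, the piece $\rho\|(\nabla\boldsymbol{\xi}_h)(\aaa_h+\uu_m)\|_{0,T}$ must be paired either with $\mu^{1/2}|\boldsymbol{\xi}_h|_{1,T}$, which yields a factor $\rho\,\mu^{-1/2}h$ not covered by $M_1$, or rescued by an additional inverse inequality; without spelling out one of these routes the claimed constants in \eqref{main-estimate} are not justified.
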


\begin{proof}
Let us define the following operators:

\begin{equation}
	\label{def-eta-eh}
	(\etw,\etp)\defi (\ww-\II_h\ww,p-\JJJ_hp)\quad \text{and}\quad (\ew,\ep)\defi (\ww_h-\II_h\ww,p_h-\JJJ_h p).
\end{equation}
Using the definition of bilinear form $A(\cdot;\cdot)$ and $\Sconv(\cdot;\cdot)$ given in \eqref{def-form-A}-\eqref{def-form-Sconv},
%   \eqref{assum1d}, 
%Cauchy--Schwarz and H\"older inequality (this last with $L^{\infty},L^2,L^2$ or $L^2,L^{\infty},L^2$), 
we have for all $(\vv_h,q_h) \in \HH_{h} \times Q_{h}$ that 
\begin{align}
	\nonumber &A((\etw,\etp),(\vv_h,q_h)) + \Sconv(\etw;\vv_h) \\ \nonumber
	=&\, 	\sigma\, (\etw,\vv_h) +  \mu \,(\nabla \etw,\nabla \vv_h) +
	\rho\, ((\nabla \uu_{m})\etw,\vv_h) +  \lambda \,(\nabla \cdot \etw,\nabla \cdot \vv_h) 
	- (\etp,\nabla \cdot \vv_h)+\\ \nonumber
	&\,  (q_h,\nabla \cdot \etw)+
	\rho\, ((\nabla \etw)(\aaa_h+\uumh),\vv_h)+ 
	% \dfrac{\rho}{2} \, \left( (\nabla \cdot \uu_{m})\, \etw , \vv_h\right)+
	\dfrac{\rho}{2} \, \left( (\nabla \cdot \aaa_h)\, \etw , \vv_h\right) \\
	=&\, I_1+ I_2.
	\label{eq:5.11.99}
\end{align}

Now, using Cauchy--Schwarz and H\"older inequalities, Lemma \ref{trilinearprop} and  \eqref{LAGRANGE1}-\eqref{LAGRANGE2}, we get 
\begin{align}
	\nonumber
	I_1
	=&\, 	\sigma\, (\etw,\vv_h) +  \mu \,(\nabla \etw,\nabla \vv_h) +
	\rho\, ((\nabla \uu_{m})\etw,\vv_h) +  \lambda \,(\nabla \cdot \etw,\nabla \cdot \vv_h) 
	- (\etp,\nabla \cdot \vv_h)\\ \nonumber
	\le &\, C\,   \Bigg\{ \sigma \, \|\etw\|_{0,\OO}^2+ \mu \, |\etw|_{1,\OO}^2 + 
	\sigma^{-1}\rho^2\, \|\nabla \uumh\|^2_{\infty,\OO}\|\etw\|^2_{0,\OO}+
	\lambda\, \| \nabla \cdot \etw\|_{0,\OO}^2 
	% \sigma^{-1}\rho^2\, \|\aaa_h+\uu_{m} \|^2_{\infty,\OO}|\etw|^2_{1,\OO}
	+ \lambda^{-1}\, \|\etp\|^2_{0,\OO}
	\Bigg\}^{1/2} \times \\ \nonumber
	&\, \Bigg\{ \sigma\, \|\vv_h\|^2_{0,\OO}+ \mu\, |\vv_h|_{1,\OO}^2 
	+\lambda\, \| \nabla \cdot \vv_h\|_{0,\OO}^2\Bigg\}^{1/2}\\ \nonumber
	\le &\,  C\, \Bigg\{ \sigma \, \|\etw\|_{0,\OO}^2+
	\left( \mu + \lambda \right) \, |\etw|_{1,\OO}^2 
	+ \lambda^{-1}\, \|\etp\|^2_{0,\OO}
	\Bigg\}^{1/2} \nh{(\vv_h,q_h)}
	\\
	\le &\,  C\,   h^k\, \Bigg\{   \left(\sigma^{1/2}\,h+ \mu^{1/2}+ \lambda^{1/2} \right)\,  \|\ww\|_{k+1,\OO}
	+ \lambda^{-1/2} \,
	\|p\|_{k,\OO}
	\Bigg\}\, \nh{(\vv_h,q_h)}.
	\label{eq:5.12a}
\end{align}

To bound $I_2$, we integrate by parts and using   Cauchy--Schwarz  and H\"older inequalities with \eqref{assum1d}, we have

\begin{align}
	\nonumber
	I_2 =&\,  (q_h,\nabla \cdot \etw)+
	\rho\, ((\nabla \etw)(\aaa_h+\uumh),\vv_h)+ 
	\dfrac{\rho}{2} \, \left( (\nabla \cdot \aaa_h)\, \etw , \vv_h\right) \\ \nonumber
	=&\, -(\nabla q_h,\etw)
	%- \rho\,  ((\nabla \vv_h)\aaa_h,\etw) - \rho\,  ((\nabla \vv_h)\uu_m,\etw)
	- \rho\,  ((\nabla \vv_h)(\aaa_h+\uu_m),\etw) 
	-\rho \, \left( (\nabla \cdot \uu_{m})\, \etw , \vv_h\right)
	-\dfrac{\rho}{2} \, \left( (\nabla \cdot \aaa_h)\, \etw , \vv_h\right) \\ \nonumber
	=&\, -\Bigg(\etw, \rho\,  (\nabla \vv_h)(\uu_m+\aaa_h) +\nabla q_h\Bigg)
	-\rho \, \left( (\nabla \cdot \uu_{m})\, \etw , \vv_h\right)
	-\dfrac{\rho}{2} \, \left( (\nabla \cdot \aaa_h)\, \etw , \vv_h\right) 
	\\ \nonumber
	\le&\, \sum_{T\in \TT_h}\|\etw\|_{0,T}\| \rho\,  (\nabla \vv_h)(\uu_m+\aaa_h) +\nabla q_h\|_{0,T}
	+\rho \, \|\nabla \cdot \uu_{m}\|_{\infty,\OO}\| \etw \|_{0,\OO}\| \vv_h\|_{0,\OO}
	\\ \nonumber
	&  +\rho \, \|\nabla \cdot \aaa_h\|_{\infty,\OO}\| \etw \|_{0,\OO}\| \vv_h\|_{0,\OO}
	\\ \nonumber
	\le&\, \sum_{T\in \TT_h}\|\etw\|_{0,T}\| \LLh(\vv_h,q_h)\|_{0,T}
	+ \sum_{T\in \TT_h}\|\etw\|_{0,T}\| \rho\,  (\nabla \uu_m)\vv_h\|_{0,T}
	\\ \nonumber & +\rho \, \Big[\|\nabla \cdot \uu_{m}\|_{\infty,\OO}+\|\nabla \cdot \aaa_h\|_{\infty,\OO} \Big]\| \etw \|_{0,\OO}\| \vv_h\|_{0,\OO}
	\\ \nonumber
	\le&\, \sum_{T\in \TT_h}\|\etw\|_{0,T}\| \LLh(\vv_h,q_h)\|_{0,T}
	+ \sum_{T\in \TT_h} \rho\, \|\nabla \uu_m\|_{\infty,T} \|\etw\|_{0,T}\| \vv_h\|_{0,T}
	\\ \nonumber & +\rho \, \Big[\|\nabla \cdot \uu_{m}\|_{\infty,\OO}+\|\nabla \cdot \aaa_h\|_{\infty,\OO} \Big]\| \etw \|_{0,\OO}\| \vv_h\|_{0,\OO}
	\\ \nonumber
	\le&\, C\, \Bigg\{  \sum_{T\in \TT_h} \Big[ \tau_T^{-1}\,  \|\etw\|_{0,T}^2
	+ \sigma\, \|\etw\|_{0,T}^2 \Big]
	+\sigma^{-1}\rho^2 \, \Big[\|\nabla \cdot \uu_{m}\|_{\infty,\OO}^2+\|\nabla \cdot \aaa_h\|_{\infty,\OO}^2\Big] \| \etw \|_{0,\OO}^2 \Bigg\}^{1/2} \times\\ \nonumber
	&\,  \Bigg\{  \sum_{T\in \TT_h} \Big[ \tau_T\, \| \LLh(\vv_h,q_h)\|_{0,T}^2 
	+\sigma\, \| \vv_h\|_{0,T}^2\Big]
	+\sigma\, \| \vv_h\|_{0,\OO}^2 \Bigg\}^{1/2}
	\\ \nonumber
	\le&\, C\, \Bigg\{  \sum_{T\in \TT_h} ( \sigma + \mu\, h_T^{-2})\,  \|\etw\|_{0,T}^2
	+\sigma^{-1}\rho^2 \, \Big[\|\nabla \cdot \uu_{m}\|_{\infty,\OO}^2+\|\nabla \cdot \aaa_h\|_{\infty,\OO}^2\Big] \| \etw \|_{0,\OO}^2 \Bigg\}^{1/2} \nh{(\vv_h,q_h)}
	\\ \nonumber
	\le&\, {C\, \Bigg(\sigma^{1/2}\, h + \mu^{1/2}  +\sigma^{-1/2}\rho \,h\, \Big[\|\nabla \cdot \uu_{m}\|_{\infty,\OO}+\|\nabla \cdot (\aaa_h-\aaa)\|_{\infty,\OO}\Big] \Bigg)}
	\, h^k \, 
	\| \ww\|_{k+1,\OO}
	\nh{(\vv_h,q_h)}\\
	\le&\, {C\, \Bigg(\sigma^{1/2}\, h + \mu^{1/2}  +\sigma^{-1/2}\rho \,h\, \Big[\|\nabla \cdot \uu_{m}\|_{\infty,\OO}+h^{1-d/2}\|\aaa\|_{2,\OO}\Big] \Bigg)}
	\, h^k \, 
	\| \ww\|_{k+1,\OO}
	\nh{(\vv_h,q_h)}
	\label{eq:5.12cc}
\end{align}

Next, from \eqref{eq:5.11.99} to \eqref{eq:5.12cc}, we have
\begin{align}
	\small
	\nonumber &A((\etw,\etp),(\vv_h,q_h)) + \Sconv(\etw;\vv_h) \\
	\le &\,  C\,  h^k\, \Bigg\{ 
	\left(\sigma^{1/2}\,h+ \mu^{1/2}+ \lambda^{1/2} +\sigma^{-1/2}\rho h\, 
	\Big[\|\nabla \cdot \uu_{m}\|_{\infty,\OO}%+\|\nabla \cdot \aaa_h\|_{\infty,\OO}
	{+h^{1-d/2}\|\aaa\|_{2,\OO}}\Big]
	\right) \,
	\|\ww\|_{k+1,\OO} \nonumber\\
	& + \lambda^{-1/2}\,
	\|p\|_{k,\OO}
	\Bigg\}\, \nh{(\vv_h,q_h)}.
	\label{eq:5.12}
\end{align}

Previous to bound the bilinear form $\Spress(\cdot;\cdot)$ given in \eqref{def-form-Spress}, we need the follow estimate. Using 
triangle inequality and \eqref{prop_tau1}-\eqref{prop_tau2}, we deduce that
\begin{align}
	\nonumber
	%\le &\,C\,
	& \sum_{T \in \T_h}  \tau_T
	\, \left\| \LLh(\etw,\etp)  \right\|_{0,T}^2 \nonumber \\
	& \qquad =\,  \sum_{T \in \T_h}  \tau_T
	\, \left\| \rho\, (\nabla \uu_{m})\etw+ \rho\, (\nabla \etw)(\aaa_h+\uu_{m}) + \nabla \etp \right\|_{0,T}^2 
	\\ \nonumber
	& \qquad \le \, C\,  \sum_{T \in \T_h}  
	\Big[ \rho^2\tau_T \,  \|\nabla \uumh\|_{\infty,T}^2\|\etw\|_{0,T}^2+
	\tau_T\rho^2 \, \|\aaa_h+\uumh\|_{\infty,T}^2|\etw|_{1,T}^2+\tau_T  \, |\etp|_{1,T}^2
	\Big]\\ 
	& \qquad \le \,C\,  \sum_{T \in \T_h}  
	\Big[ \sigma\, \|\etw\|_{0,T}^2 + \sigma^{-1}  \rho^2\, \|\aaa_h+\uumh\|_{\infty,T}^2|\etw|_{1,T}^2+ \dfrac{\delta h^2_T}{\mu}   \, |\etp|_{1,T}^2
	\Big].
	%
	%
	%
	%\le &\,  C\, h^k\, \Bigg\{ 
	% \sigma^{1/2}\,h +
	% \sigma^{-1/2}\rho \, \|\aaa_h+\uu_{m} \|_{\infty,\OO}   \,
	%\|\ww\|_{k+1,\OO}+\mu^{-1/2}\,
	%\|p\|_{k,\OO}
	%\Bigg\}.
	\label{eq:5.125}
\end{align}

Now, applying the Cauchy-Schwarz inequality, Lemma \ref{Cinv}, %assumption \eqref{assum1d} and \eqref{hip3} to arrive at
\eqref{pseudo-ortho-2} and \eqref{eq:5.125} we get %to arrive at

\begin{align} 
	\nonumber &
	\Spress(\etw,\etp;\vv_h,q_h) \\ \nonumber
	=&\, \sum_{T \in \T_h}  \tau_T\, \Big(
	\sigma\, \etw  - \mu \, \Delta \etw + \LLh(\etw,\etp),
	- \sigma\, \vv_h +   \mu \, \Delta \vv_h + \LLh(\vv_h,q_h)
	\Big)_{T} \\ \nonumber
	\le&\, \sum_{T \in \T_h}  \tau_T \, \Bigg\{
	\sigma\, \|\etw\|_{0,T}  + \mu \, \| \Delta \etw \|_{0,T}+
	% \rho\, \|\nabla \uumh\|_{\infty,T}\|\etw\|_{0,T}+
	% \rho\, \|\aaa_h+\uumh\|_{\infty,T}|\etw|_{1,T}+ |\etp|_{1,T}
	\|\LLh(\etw,\etp)  \|_{0,T}
	\Bigg\} \, 
	\|- \sigma\,\vv_h  + \mu \, \Delta \vv_h +\LLh(\vv_h,q_h)\|_{0,T}\\ \nonumber
	\le&\, C\, \Bigg\{\sum_{T \in \T_h}  \Bigg[
	\sigma^2\tau_T \, \|\etw\|_{0,T}^2  +
	\mu^2\tau_T \, \|  \etw \|_{2,T}^2+
	%\rho^2\tau_T \,  \|\nabla \uumh\|_{\infty,T}^2\|\etw\|_{0,T}^2+\\ \nonumber
	%&\, 
	% \tau_T\rho^2 \, \|\aaa_h+\uumh\|_{\infty,T}^2|\etw|_{1,T}^2+\tau_T  \, |\etp|_{1,T}^2
	\|\LLh(\etw,\etp)  \|_{0,T}^2
	\Bigg] \Bigg\}^{1/2} \\ \nonumber
	&\times\,
	\Bigg\{\sum_{T \in \T_h} \tau_T\, 
	%\Bigg[  \sigma^2 \, \|\vv_h \|_{0,T}^2+    \mu^2  \,\|\Delta \vv_h\|_{0,T}^2 +   \,\|\LLh(\vv_h,q_h)\|_{0,T}^2 \Bigg]
	\| -\sigma\,\vv_h  + \mu \, \Delta \vv_h +\LLh(\vv_h,q_h)\|_{0,T}^2
	\Bigg\}^{1/2} \\ \nonumber
	\le&\small \, C\, \Bigg\{\sum_{T \in \T_h}  \Bigg[
	\sigma \, \|\etw\|_{0,T}^2  +
	\mu h_T^2 \, \|  \etw \|_{2,T}^2+
	\sigma^{-1}  \rho^2 \, \|\aaa_h+\uumh\|_{\infty,T}^2|\etw|_{1,T}^2+  \dfrac{\delta h^2_T}{\mu} \, |\etp|_{1,T}^2
	\Bigg] \Bigg\}^{1/2}\, \nh{(\vv_h,q_h)}\\ 
	%
	%
	%%
	%
	%
	%\le &\,  C\, h^k\, \Bigg\{ 
	%\left( \sigma^{1/2}\,h+ \mu^{1/2}
	% +\sigma^{-1/2}\rho \, \|\aaa_h+\uu_{m} \|_{\infty,\OO} \right)  \,
	%\|\ww\|_{k+1,\OO}+\mu^{-1/2}\,
	%\|p\|_{k,\OO}
	%\Bigg\} \, \nh{(\vv_h,q_h)}, \label{eq:5.14}
	\le &\,  {C\, h^k\, \Bigg\{ 
		\left( \sigma^{1/2}\,h+ \mu^{1/2}
		+\sigma^{-1/2}\rho \, \big(\|\aaa\|_{\infty,\OO} +\|\uu_{m} \|_{\infty,\OO} \big)\right)  \,
		\|\ww\|_{k+1,\OO}+\mu^{-1/2}\,
		\|p\|_{k,\OO}
		\Bigg\} \, \nh{(\vv_h,q_h)}, }\label{eq:5.14}
\end{align}
for all $(\vv_h,q_h) \in \HH_{h} \times Q_{h}$.\\
Hence, inserting \eqref{eq:5.12} and \eqref{eq:5.14} into definition of $\B$ given in \eqref{def-form-Bstab} gives
\begin{align}
	\nonumber &\,
	\B(\etw,\etp;\vv_h,q_h) \\  \nonumber
	\le &\,  C\,  h^k\, \Bigg\{
	\Big(
	\sigma^{1/2}\,h+ \mu^{1/2}+ \lambda^{1/2}+
	\sigma^{-1/2}
	\rho\, \Big[  
	h \, \Big(  \|\nabla \cdot \uu_{m}\|_{\infty,\OO}+ {h^{1-d/2}\|\aaa\|_{2,\OO}} \Big)  +\\ 
	&\, 
	\|\aaa_h+\uu_{m} \|_{\infty,\OO} 
	\Big]
	\Big) \,   \|\ww\|_{k+1,\OO}+
	\Big(\lambda^{-1/2} 
	+\mu^{-1/2}
	\Big) \,
	\|p\|_{k,\OO} \nonumber
	\Bigg\} \nh{(\vv_h,q_h)} \\ 
	= &\,  C\,  h^k\, \Bigg\{
	\Big(
	C_1 +
	\sigma^{-1/2}
	\rho\,  
	h \, \Big[  \|\nabla \cdot \uu_{m}\|_{\infty,\OO}+ {h^{1-d/2}\|\aaa\|_{2,\OO}} \Big] 
	\Big) \,   \|\ww\|_{k+1,\OO}+
	M_2 \,
	\|p\|_{k,\OO}
	\Bigg\} \nh{(\vv_h,q_h)}, \label{eq:5.15}
\end{align}
for all $(\vv_h,q_h) \in \HH_{h} \times Q_{h}$.

Now, from Lemma \ref{coercivity_lemma}, Lemma \ref{pseudo-orthogo} and \eqref{eq:5.15}, we conclude that
\begin{align}
	\nonumber
	&\, C_B \, \nh{(\ew,\ep)}^2 \\
	\nonumber
	\le&\, \B((\ew,\ep),(\ew,\ep))\\
	\nonumber
	=&\,  \B((\etw,\etp),(\ew,\ep)) - \B((\ww-\ww_h,p-p_h),(\ew,\ep))  \\
	\nonumber
	\le &\, C\,  h^k\, \Bigg\{ \Bigg(
	\sigma^{1/2}\,h+ \mu^{1/2}+ \lambda^{1/2}+ 
	\sigma^{-1/2}
	\rho\, \Big[  
	h \, \Big(  \|\nabla \cdot \uu_{m}\|_{\infty,\OO}+ {h^{1-d/2}\|\aaa\|_{2,\OO}}  \Big)  +\\ 
	&\, 
	\|\aaa\|_{\infty,\OO}+\|\uu_{m} \|_{\infty,\OO} + \|\aaa\|_{k+1,\OO}
	\Big]
	\Bigg) \,   \|\ww\|_{k+1,\OO}+\Bigg( \lambda^{-1/2} +\mu^{-1/2} \Bigg)\,
	\|p\|_{k,\OO}
	\Bigg\} \, \nh{(\ew,\ep)} \nonumber \\
	\nonumber
	= &\, C\,  h^k\, \Bigg\{ \Bigg(
	C_1+ 
	\sigma^{-1/2}
	\rho\, \Big[  
	h \, \Big(  \|\nabla \cdot \uu_{m}\|_{\infty,\OO}+{h^{1-d/2}\|\aaa\|_{2,\OO}} \Big)  + 
	\|\aaa\|_{k+1,\OO}
	\Big]
	\Bigg) \,   \|\ww\|_{k+1,\OO} \nonumber \\ 
&	+M_2\,	\|p\|_{k,\OO}
	\Bigg\} \, \nh{(\ew,\ep)} \nonumber \\
	= &\, C\,  h^k\, \Bigg\{ M_1  \,   \|\ww\|_{k+1,\OO} + M_2\, \|p\|_{k,\OO}
	\Bigg\} \, \nh{(\ew,\ep)}.
	\label{eq:5.16}
\end{align}
In the same manner, from {\eqref{hip3-lemma3}}, the interpolation properties \eqref{LAGRANGE1}-\eqref{LAGRANGE2} and \eqref{eq:5.125}, we can see that
\begin{align}
	\nonumber
	&   \nh{(\etw,\etp)}^2 \\
	\nonumber
	\le&\,
	\sigma\, \|\etw\|_{0,\OO}^2+\mu\, | \etw|_{1,\OO}^2 + \lambda\, \| \nabla \cdot \etw \|_{0,\OO}^2+  \sum_{T \in \T_h}  \tau_T  \,  \| \LLh(\etw,\etp)\|^2_{0,T}   \\
	%
	%
	%
	%  \nonumber
	%  =&\,
	%  \sigma\, \|\etw\|_{0,\OO}^2+\mu\, | \etw|_{1,\OO}^2+ \lambda\, \| \nabla \cdot \etw \|_{0,\OO}^2+
	%  \sum_{T \in \T_h}  \tau_T \,  \| \rho\, (\nabla \uu_{m})\etw + \rho\, (\nabla \etw)(\aaa_h+\uu_{m}) + \nabla \etp \|^2_{0,T}   \\
	%%
	%%
	%%
	%  \nonumber
	%  \le &\, C\, \Bigg\{
	%  \sigma\, \|\etw\|_{0,\OO}^2+\mu\, | \etw|_{1,\OO}^2+ \lambda\, \| \nabla \cdot \etw \|_{0,\OO}^2+
	%  \sum_{T \in \T_h} \Bigg[ \tau_T \,  \| \rho\, (\nabla \uu_{m})\etw \|^2_{0,T} +\\ \nonumber
	%&\, \tau_T\,  \|  \rho\, (\nabla \etw)(\aaa_h+\uu_{m}) \|^2_{0,T} +
	%\tau_T \,  |  \etp |^2_{1,T}   \Bigg]
	%\Bigg\} \\ \nonumber
	%%
	%%
	%%
	%  \nonumber
	%  \le &\, C\, \Bigg\{
	%  \sigma\, \|\etw\|_{0,\OO}^2+\mu\, | \etw|_{1,\OO}^2+ \lambda\, \| \nabla \cdot \etw \|_{0,\OO}^2+ 
	%%
	%\sum_{T \in \T_h} \Bigg[ \rho^2\tau_T \,  \| \nabla \uu_{m}\|_{\infty,T}^2\|\etw \|^2_{0,T} +\\ \nonumber
	%&\, \tau_T\rho^2  \, \|\aaa_h+\uu_{m} \|^2_{\infty,T}  |  \etw|_{1,T}^2 +
	%\tau_T  \,  |  \etp |^2_{1,T}   \Bigg]
	%%
	%\Bigg\} \\ \nonumber
	%%
	%%
	%%
	\nonumber
	\le &\, C\, \Bigg\{
	\sigma\, \|\etw\|_{0,\OO}^2+\mu\, | \etw|_{1,\OO}^2+ \lambda\, \| \nabla \cdot \etw \|_{0,\OO}^2+
	\sum_{T \in \T_h} \Bigg[ \sigma^{-1}\rho^2  \, \|\aaa_h+\uu_{m}\|^2_{\infty,T}  |  \etw|_{1,T}^2 +
	\dfrac{\delta h^2_T}{\mu}  \,  |  \etp |^2_{1,T}   \Bigg]
	\Bigg\} \\ \nonumber
	\nonumber
	\le &\, C\, \Bigg\{
	\sigma\, \|\etw\|_{0,\OO}^2+\left(\mu+\lambda+ \sigma^{-1}\rho^2  \, \|\aaa_h+\uu_{m} \|^2_{\infty,\OO}\right)
	\, | \etw|_{1,\OO}^2+ 
	\sum_{T \in \T_h} \dfrac{\delta h^2_T}{\mu}  \,  |  \etp |^2_{1,T}  
	\Bigg\} \\ \nonumber
	\le&\,C\,  h^{2k}\, \Bigg\{ 
	\left(\sigma h^2 + \mu
	+ \lambda+ \sigma^{-1}\rho^2  \, \|\aaa_h+\uu_{m} \|^2_{\infty,\OO} 
	\right) \,
	\|\ww\|_{k+1,\OO}^2+
	\mu^{-1}
	\,
	\|p\|^2_{k,\OO}
	\Bigg\}  \\\nonumber
	=&\,C\,  h^{2k}\, \Bigg\{ 
	C_1^2 \,
	\|\ww\|_{k+1,\OO}^2+
	\mu^{-1}
	\,
	\|p\|^2_{k,\OO}
	\Bigg\} ,
\end{align}
and combining the last estimate with \eqref{eq:5.16}, the result follows.
\end{proof}

\begin{remark}
{The error estimate presented in the previous result is not robust with
	respect to the viscosity $\mu$.  One alternative to avoid this dependence is to make a stronger
	assumption on the pressure.  In fact,  if we suppose that
	$p\in H^{k+1}(\OO)\cap L^2_0(\OO)$,  then we get
	\[ 
	\sum_{T\in \TT_h} \tau_T \,  |  \etp |^2_{1,T}   \le C\, 
	\sum_{T\in \TT_h} \sigma^{-1}\,  |  \etp |^2_{1,T}  \le C\, \sigma^{-1}\,h^{2k}\,  \|  p \|^2_{k+1,\OO},
	\]
	and following the same steps as to obtain \eqref{main-estimate} we obtain
	the following robust estimate}
\begin{equation}
	\nh{(\ww-\ww_h,p-p_h)} 
	\le \, 
	C\,
	h^k\, \Bigg\{ 
	M_1\,  \|\ww\|_{k+1,\OO}+
	\Big(\lambda^{-1/2} +\sigma^{-1/2}
	\Big) \, 
	\|p\|_{k+1,\OO}
	\Bigg\}.
\end{equation}
\end{remark}

%
% Estimacion \| p - p_h \|
%

\begin{corollary}
Assume  the assumptions of Theorem \ref{convergencia}.
Then, there is a positive constant $C$ independent of $h$, such that
%\textcolor{red}{REVISAR}
\begin{equation}
	\|p-p_h\|_{0,\OO}  \le \, C\, h^k\, 
	\Bigg\{
	\Big( 
	M_1M_3+M_4\Big) \,   \|\ww\|_{k+1,\OO}+
	M_2M_3 \, 
	\|p\|_{k,\OO}
	\Bigg\},
\end{equation}
where
\begin{equation}
	\label{M3}
	\begin{split}
		M_3 \defi & \; \sigma^{1/2} + C_1 + \rho\, \Big( h^2 \mu^{-1} \sigma^{1/2}+h\mu^{-1/2}\Big)\, (   \|\aaa\|_{\infty,\OO}+\|\uu_m\|_{\infty,\OO} ) \\
		& + \sigma^{-1/2}\rho\,  \Big( \|\nabla \cdot \uu_{m}\|_{\infty,\OO}+ {h^{1-d/2}\|\aaa\|_{2,\OO}} \Big),
	\end{split}
\end{equation}
and
\begin{equation}
	\label{M4}
	{M_4 \defi  \Bigg[ \rho  +\rho  \sigma^{-1/2}   \mu^{1/2}+ \sigma^{-1}\rho^2 \, (   \|\aaa\|_{\infty,\OO}+\|\uu_m\|_{\infty,\OO} ) %\textcolor{blue}{\|\aaa_h+\uu_m\|_{\infty,\OO}
			\Bigg] \, \|\aaa\|_{k+1,\OO} }.
	\end{equation}
\end{corollary}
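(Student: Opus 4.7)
The plan is to promote the energy-norm estimate of Theorem~\ref{convergencia} to an $L^2$ bound for the pressure error via the continuous inf-sup condition satisfied by $b(\cdot,\cdot)$. Since $p-p_h\in Q$, there exists $\vv\in\HH$ with $\|\vv\|_{1,\OO}\le C\|p-p_h\|_{0,\OO}$ and $b(\vv,p-p_h)\ge\beta\|p-p_h\|_{0,\OO}^2$, so it suffices to bound $b(\vv,p-p_h)$ linearly in $\|\vv\|_{1,\OO}$.

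I would then split $b(\vv,p-p_h)=b(\vv-\II_h\vv,p-p_h)+b(\II_h\vv,p-p_h)$. For the second piece, subtract the continuous formulation \eqref{vf1} tested with $\II_h\vv$ from the stabilized scheme \eqref{stab} tested with $(\II_h\vv,0)$. Following the pattern of Lemma~\ref{pseudo-orthogo}, this produces an identity expressing $b(\II_h\vv,p-p_h)$ as a sum of: (i) pairings between $\ww-\ww_h$ and $\II_h\vv$ through the pieces of $\tilde{a}$, the $\lambda$-divergence stabilization and the $\tau_T$-weighted strong-residual $\LLh$ term, all controllable by $\nh{(\ww-\ww_h,p-p_h)}\,\|\vv\|_{1,\OO}$; and (ii) the genuine consistency corrections $\tfrac{\rho}{2}((\nabla\cdot(\aaa_h-\aaa))\ww,\II_h\vv)+\rho((\nabla\ww)(\aaa_h-\aaa),\II_h\vv)$ together with their $\tau_T$-weighted counterpart, which via \eqref{prop_aaa_1} yield the $\|\aaa\|_{k+1,\OO}$-multiplied factor defining $M_4$.

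For $b(\vv-\II_h\vv,p-p_h)$, I would integrate by parts element-wise, writing it as $\sum_{T\in\TT_h}(\vv-\II_h\vv,\nabla(p-p_h))_T$, and then replace $\nabla(p-p_h)=\LLh(\ww-\ww_h,p-p_h)-\rho(\nabla\uu_m)(\ww-\ww_h)-\rho(\nabla(\ww-\ww_h))(\aaa_h+\uu_m)$. Each resulting pairing is bounded by Cauchy--Schwarz with weight $\tau_T^{1/2}$, using the Lagrange bound $\|\vv-\II_h\vv\|_{0,T}\le Ch_T\,|\vv|_{1,\tilde{\omega}_T}$ and the elementary estimate $\tau_T^{-1/2}h_T\le C(\sigma^{1/2}h_T+\mu^{1/2})$ implied by \eqref{Def:Stab_Param}, so this piece is controlled by $C(\sigma^{1/2}h+\mu^{1/2}+\rho(\|\aaa\|_{\infty,\OO}+\|\uu_m\|_{\infty,\OO}))\,\nh{(\ww-\ww_h,p-p_h)}\,\|\vv\|_{1,\OO}$.

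Combining the two bounds, substituting Theorem~\ref{convergencia} for $\nh{(\ww-\ww_h,p-p_h)}$, and dividing by $\|\vv\|_{1,\OO}\le C\|p-p_h\|_{0,\OO}$, yields the claimed estimate, the regrouping of coefficients producing exactly $M_3$ (the $\nh{\cdot}$-multiplier) and $M_4$ (the intrinsic $\|\aaa\|_{k+1,\OO}$-multiplier). The main obstacle is the careful bookkeeping of how the various $\rho$- and $\sigma$-dependent factors combine: terms in which $\aaa-\aaa_h$ multiplies $\ww-\ww_h$ must be collected with the other $\nh{\cdot}$-absorbed contributions and hence end up inside $M_3$, whereas those in which $\aaa-\aaa_h$ multiplies $\ww$ directly remain in $M_4$; matching the specific powers of $h$ appearing in $M_3$ (in particular the $h^{1-d/2}\|\aaa\|_{2,\OO}$ factor) additionally requires invoking the $L^\infty$-approximation bound in \eqref{prop_aaa_1}.
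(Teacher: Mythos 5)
Your overall route is the same as the paper's: lift $p-p_h$ to a velocity field in $\HH$ whose divergence is $p-p_h$ and whose $H^1$ norm is controlled by $\|p-p_h\|_{0,\OO}$, split the resulting duality into an interpolated part and a discrete part, treat the discrete part through the consistency identity of Lemma~\ref{pseudo-orthogo} tested with $(\vv_h,0)$, bound the interpolation part with $\tau_T$-weighted Cauchy--Schwarz, and conclude with Theorem~\ref{convergencia}. One repair is cosmetic: since $\vv\in\HH$ has only $H^1$ regularity you cannot apply the Lagrange interpolant $\II_h$ to it; you need a Cl\'ement/Scott--Zhang operator (the patchwise estimate over $\tilde{\omega}_T$ that you quote is in fact the Cl\'ement bound \eqref{LAGRANGE2}, which is exactly what the paper uses via $\mathcal{C}_h$).

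The genuine gap is in your item (i): you assert that all pairings of $\ww-\ww_h$ with the test function through the $\tau_T$-weighted strong residual are controllable by $\nh{(\ww-\ww_h,p-p_h)}\,\|\vv\|_{1,\OO}$. This fails for the Laplacian contribution: $\Spress(\ww-\ww_h,p-p_h;\vv_h,0)$ carries the term $\mu\,\Delta(\ww-\ww_h)$, and $\sum_{T\in\TT_h}\tau_T\,\mu^2\|\Delta(\ww-\ww_h)\|_{0,T}^2$ is \emph{not} bounded by the triple norm (which contains only first derivatives and the first-order residual $\LLh$), nor can you invoke an inverse inequality, since $\ww-\ww_h$ is not a discrete function. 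The paper closes this step (term $I_{10}$, estimate \eqref{eq:5.31}) by splitting $\ww-\ww_h=\etw-\ew$, using the interpolation estimate \eqref{LAGRANGE1} for $\Delta\etw$ and the inverse inequality \eqref{Cinv} for the discrete part $\ew$; this produces an additional $\mu^{1/2}h^k\|\ww\|_{k+1,\OO}$ contribution that is not of the form $\nh{(\ww-\ww_h,p-p_h)}$ and must be carried separately into the final bound. Without this extra idea your argument does not close as written. A smaller imprecision: your stated bound for $b(\vv-\II_h\vv,p-p_h)$ with factor $\rho(\|\aaa\|_{\infty,\OO}+\|\uu_m\|_{\infty,\OO})$ is not what your own computation yields — pairing $\rho(\nabla(\ww-\ww_h))(\aaa_h+\uu_m)$ against $\|\vv-\II_h\vv\|_{0,T}\le C\,h_T|\vv|_{1,\tilde{\omega}_T}$ and paying $\mu^{-1/2}$ to reach the triple norm gives a factor $\rho\,h\,\mu^{-1/2}\|\aaa_h+\uu_m\|_{\infty,\OO}$, which is precisely why $M_3$ contains $\rho\,(h^2\mu^{-1}\sigma^{1/2}+h\mu^{-1/2})(\|\aaa\|_{\infty,\OO}+\|\uu_m\|_{\infty,\OO})$; since the corollary is an upper bound this does not damage the final statement, but the intermediate claim as written is not justified by the route you describe.
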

%
% Proof
%
\begin{proof}
	%
	% Nuevo inicio 17 Junio 2024
	%
	As is well known, given $p-p_h \in Q$, there exist 
	$\vvt \in \HH$ and a positive constant $C$   (see \cite{BGHRR24}) such that
	\begin{equation}
		\label{eq:5.32}
		\nabla \cdot \vvt=p-p_h \qquad  \text{ and } \qquad  \|\vvt\|_{1,\OO}\le C\|p-p_h\|_{0,\OO}.
	\end{equation}
	
	Furthermore, let  $\vv_h \defi \mathcal{C}_h \vvt$, where 
	$\mathcal{C}_h \vvt\in \HH_h$ is the Cl\'ement interpolate  of $\vvt$ (cf. to scalar case and the properties \eqref{LAGRANGE2}-\eqref{eq:5.38}). 
	Applying integration by parts and the fact that $\vvt$ and $\vv_h$ vanish on $\partial \OO$, we get
	{\begin{equation}
			\|p-p_h\|_{0,\OO}^2
			=\, (p-p_h, \nabla \cdot \vvt)
			%
			%
			%
			%\nonumber
			%&=\, (p-p_h, \nabla \cdot (\vvt-\vv_h)) + (p-p_h,\nabla \cdot \vv_h) \\
			%
			%
			%
			=\, -(\nabla(p-p_h),\vvt-\vv_h) + (p-p_h,\nabla \cdot \vv_h) 
			%
			%
			%
			%\nonumber
			%&=\, -(\nabla(p-p_h),\vvt-\vv_h) + (p-p_h,\nabla \cdot \vv_h) \\
			%
			%
			%
			%&= I_9 + I_{10}.
			= I_3 + I_{4}.
			\label{eq:5.39}
	\end{equation}}
	To bound $I_3$, {we first notice that}
	\begin{align}
		\nonumber
		&\, 
		\sum_{T\in \T_h} \tau_T\, |p-p_h|_{1,T}^2 \\
		\nonumber
		\le &\, C\,  
		\sum_{T\in \T_h} \tau_T\,  \Big[\|\LLh(\ww-\ww_h,p-p_h)\|_{0,T}^2+
		\rho^2\, \| (\nabla \uu_m)(\ww-\ww_h) + \nabla(\ww-\ww_h)(\uu_m+ \aaa_h)   \|_{0,T}^2
		\Big]
		\\
		\nonumber
		\le &\small \, C\, 
		\sum_{T\in \T_h} \tau_T\,  \Big[\|\LLh(\ww-\ww_h,p-p_h)\|_{0,T}^2+
		\rho^2\, \| \nabla \uu_m\|_{\infty,\OO}^2\| \ww-\ww_h \|_{0,T}^2+
		\rho^2\,\|\uu_m+ \aaa_h   \|_{\infty,T}^2 |\ww-\ww_h|_{1,T}^2 
		\Big]
		\\
		%
		%
		%
		%\nonumber
		%\le &\, C\,  \Bigg\{ 
		%\sum_{T\in \T_h} \tau_T\,  \Big[\|\LLh(\ww-\ww_h,p-p_h)\|_{0,T}^2+
		%\sigma^2\, \| \ww-\ww_h \|_{0,\OO}^2+
		%\rho^2\,\|\uu_m+ \aaa_h   \|_{\infty,T}^2 |\ww-\ww_h|_{1,T}^2 
		%\Big]
		%\Bigg\}^{1/2}  \times \\ \nonumber
		%&\, 
		%\Bigg\{ 
		%\sigma^{1/2}h +  \mu^{1/2}
		%\Bigg\}\, \| \vvt\|_{1,\OO}  \\
		%%
		%
		%
		\nonumber
		\le &\, C\,  
		\nh{(\ww-\ww_h,p-p_h)}^2+
		\rho^2\,\|\uu_m+ \aaa_h   \|_{\infty,\OO}^2
		\sum_{T\in \T_h} \tau_T\,   |\ww-\ww_h|_{1,T}^2  \\
		%
		%
		%
		%
		%
		%
		%\nonumber
		%\le &\, C\,  \Bigg\{
		% 1+\rho\mu^{-1}h\,\|\uu_m+ \aaa_h   \|_{\infty,\OO}
		%\Bigg\}\,
		% \nh{(\ww-\ww_h,p-p_h)} ,
		\le &{\, C\,  \Bigg\{
			1+\rho^2{\mu^{-2}}h^2\,\|\uu_m+ \aaa_h   \|^2_{\infty,\OO}
			\Bigg\}\,
			\nh{(\ww-\ww_h,p-p_h)}^2} ,
		\label{eq:5.40alt2}
	\end{align}
	and that,  %for definition of $\tau_T$ and using \eqref{LAGRANGE2}, we have
	{using the definition of $\tau_T$ and \eqref{LAGRANGE2} we obtain
		\begin{equation}
			\sum_{T\in \T_h} \tau_T^{-1}\, \|\vvt-\vv_h\|_{0,T}^2
			%
			%
			%
			%\le 
			%\sigma\, \| \vvt-\vv_h\|^2_{0,\OO} + 
			%\sum_{T\in \T_h} \mu h_T^{-2}\, \| \vvt-\vv_h\|_{0,T}^2
			%
			%
			%\nonumber
			\le  C\,  
			\Bigg\{ 
			\sigma h^2 +  \mu
			\Bigg\}\,
			\| \vvt\|_{1,\OO}^2.
			\label{eq:5.40alt3}
	\end{equation}}
	From these two facts, we conclude that
	%and from this two fact, we conclude that 
	%
	%
	%\newpage
	%To bound $I_3$, using \eqref{LAGRANGE2},  Cauchy-Schwarz and triangle inequalities, we get
	%
	%
	%
	\begin{align}
		%I_9
		I_3
		\nonumber
		=&\,- (\nabla(p-p_h),\vvt-\vv_h)  \\
		\le &\, C\,  
		\Bigg\{ 
		\sigma^{1/2}h +  \mu^{1/2}+ 
		\rho h \mu^{-1/2} \, (\sigma^{1/2}\mu^{-1/2}h+1)\,\|\uu_m+ \aaa_h   \|_{\infty,\OO}
		\Bigg\}\,
		\nh{(\ww-\ww_h,p-p_h)} \,
		\| \vvt\|_{1,\OO}.
		\label{eq:5.40}
	\end{align}
	%
	%
	%
	%\textcolor{red}{Semana del 11 de Junio: Seguir las consecuencias del cambio en \eqref{eq:5.40}. En particular seguir el rastro al t\'ermino en rojo que est\'a m\'as %abajo.}\\
	%
	%
	%
	%
	%
	%
	%
	On the other hand, from \eqref{pseudo-ortho-1}, we can deduce  that
	\begin{align}
		\nonumber
		&\, A(\ww-\ww_h,p-p_h;\vv_h,0) + \Sconv(\ww-\ww_h;\vv_h)+\Spress(\ww-\ww_h,p-p_h;\vv_h,0) \\ \nonumber
		\nonumber =&\, 
		\dfrac{\rho}{2} \, ( \nabla\cdot(\aaa_h-\aaa)\ww,\vv_h)+ 
		\rho\, ( \nabla \ww(\aaa_h-\aaa),\vv_h)+
		\nonumber\\  
		 &\, 		\sum_{T \in \T_h}  \tau_T \,
		\Bigg( 
		\rho\, (\nabla \ww)(\aaa_h-\aaa) ,
		-\sigma\, \vv_h +   \mu \, \Delta \vv_h + \LLh(\vv_h,0) 
		\Bigg)_T,
		\nonumber
	\end{align}
	and as consequence, we get 
	\begin{align}
		\nonumber
		&\sigma\, (\ww-\ww_h,\vv_h) +  \mu \,(\nabla (\ww-\ww_h),\nabla \vv_h) +
		\rho\, ((\nabla \uu_{m})(\ww-\ww_h),\vv_h) +
		\\
		\nonumber
		&\,  \lambda\, (\nabla \cdot (\ww-\ww_h), \nabla\cdot \vv_h)  - (p-p_h,\nabla \cdot \vv_h)
		+ \Sconv(\ww-\ww_h;\vv_h)+\Spress(\ww-\ww_h,p-p_h;\vv_h,0) 
		\\ \nonumber
	  =&\, 
		\dfrac{\rho}{2} \, ( \nabla\cdot(\aaa_h-\aaa)\ww,\vv_h)+ 
		\rho\, ( \nabla \ww(\aaa_h-\aaa),\vv_h) \nonumber \\ 
		& + \sum_{T \in \T_h}  \tau_T \,
		\Bigg( 
		\rho\, (\nabla \ww)(\aaa_h-\aaa) ,
		-\sigma\, \vv_h +   \mu \, \Delta \vv_h + \LLh(\vv_h,0) 
		\Bigg)_T.
		\nonumber
	\end{align}
	%It lasts remains to show that
	This last allows us to prove that
	\begin{align}
		\nonumber
		I_{4}
		=&\, (p-p_h,\nabla \cdot \vv_h)\\
		%
		%
		%
		%
		% \nonumber
		%=&\, \sigma\, (\ww-\ww_h,\vv_h) +  \mu \,(\nabla (\ww-\ww_h),\nabla \vv_h) +
		%\rho\, ((\nabla \uu_{m})(\ww-\ww_h),\vv_h) + \lambda\, (\nabla \cdot (\ww-\ww_h), \nabla\cdot \vv_h)  +
		%\\
		%\nonumber
		%&\, 
		% \Sconv(\ww-\ww_h;\vv_h)+\Spress(\ww-\ww_h,p-p_h;\vv_h,0) 
		%-  \dfrac{\rho}{2} \, ( \nabla\cdot(\aaa_h-\aaa)\ww,\vv_h)-
		%\\ \nonumber
		% \nonumber &\,
		% \rho\, ( \nabla \ww(\aaa_h-\aaa),\vv_h)-
		%  \sum_{T \in \T_h}  \tau_T \,
		%\Bigg( 
		% \rho\, \nabla \ww(\aaa_h-\aaa) ,
		% -\sigma\, \vv_h +   \mu \, \Delta \vv_h + \LLh(\vv_h,0) 
		%\Bigg)_T\\
		%
		%
		%
		\nonumber
		=&\,\small  \sigma\, (\ww-\ww_h,\vv_h) +  \mu \,(\nabla (\ww-\ww_h),\nabla \vv_h) +
		\rho\, ((\nabla \uu_{m})(\ww-\ww_h),\vv_h) + \lambda\, (\nabla \cdot (\ww-\ww_h), \nabla\cdot \vv_h)  +
		\\
		\nonumber
		&\, \small
		\rho\, (\nabla (\ww-\ww_h)(\aaa_h+\uu_{m}),\vv_h) + \dfrac{\rho}{2} \, \left( (\nabla \cdot \aaa_h)\, (\ww-\ww_h) , \vv_h\right)-\\
		\nonumber
		&\, \small
		\dfrac{\rho}{2} \, ( \nabla\cdot(\aaa_h-\aaa)\ww,\vv_h)-
		\rho\, ( (\nabla \ww)(\aaa_h-\aaa),\vv_h)+
		\\ 
		\nonumber &\,\small
		\sum_{T \in \T_h}  \tau_T \,
		\Bigg( 
		\sigma\, (\ww-\ww_h)- \mu\, \Delta (\ww-\ww_h) +\LLh(\ww-\ww_h,p-p_h)
		, -\sigma\, \vv_h +   \mu \, \Delta \vv_h + \LLh(\vv_h,0) 
		\Bigg)_T \nonumber -\\ \nonumber
		&\,\small
		\sum_{T \in \T_h}  \tau_T \,
		\Bigg( 
		\rho\, (\nabla \ww)(\aaa_h-\aaa) ,
		-\sigma\, \vv_h +   \mu \, \Delta \vv_h + \LLh(\vv_h,0) 
		\Bigg)_T \\
		%
		%
		%
		%=&\, I_3 + I_4+I_5+I_6.
		=&\, I_5 + I_6+I_7+I_8.
		\label{eq:5.26}
	\end{align}
	In order to bound the first term on \eqref{eq:5.26}, we use integration by parts, Cauchy-Schwarz and H\"older inequalities and \eqref{eq_may:2.5}, we get
	\begin{align}
		\nonumber
		%&
		I_5 %\\
		%
		%
		%
		%\nonumber 
		=&\, \small\sigma\, (\ww-\ww_h,\vv_h) +  \mu \,(\nabla (\ww-\ww_h),\nabla \vv_h) +
		\rho\, ((\nabla \uu_{m})(\ww-\ww_h),\vv_h) 
		\\
		\nonumber
		&\, \small
		+ \lambda\, (\nabla \cdot (\ww-\ww_h), \nabla\cdot \vv_h)  +\rho\, ((\nabla (\ww-\ww_h))(\aaa_h+\uu_{m}),\vv_h) + \dfrac{\rho}{2} \, \left( (\nabla \cdot \aaa_h)\, (\ww-\ww_h) , \vv_h\right)\\
		\nonumber 
		=&\, \small\sigma\, (\ww-\ww_h,\vv_h) +  \mu \,(\nabla (\ww-\ww_h),\nabla \vv_h) +
		\rho\, ((\nabla \uu_{m})(\ww-\ww_h),\vv_h) + \lambda\, (\nabla \cdot (\ww-\ww_h), \nabla\cdot \vv_h)  -
		\\
		\nonumber
		&\, \small
		\rho\, ((\nabla \vv_h)(\aaa_h+\uu_{m}),\ww-\ww_h) 
		- \rho \, (\nabla \cdot (\aaa_h+\uu_{m}) \vv_h,\ww-\ww_h) +
		\dfrac{\rho}{2} \, \left( (\nabla \cdot \aaa_h)\, (\ww-\ww_h) , \vv_h\right)\\
		%
		%
		%
		% \nonumber
		% \le&\, C\, \Bigg\{
		% \sigma\, \|\ww-\ww_h\|_{0,\OO}\|\vv_h\|_{0,\OO} +  \mu \,|\ww-\ww_h|_{1,\OO}|\vv_h|_{1,\OO} +
		%\rho\, \|\nabla \uu_{m}\|_{\infty,\OO}\|\ww-\ww_h\|_{0,\OO}\|\vv_h\|_{0,\OO} +\\
		%\nonumber
		%&\, 
		%  \lambda\, \| \nabla \cdot (\ww-\ww_h)\|_{0,\OO}
		%\| \nabla\cdot \vv_h\|_{0,\OO}  +
		% \rho\,  \|\aaa_h+\uu_{m}\|_{\infty,\OO}|\vv_h|_{1,\OO}\|\ww-\ww_h\|_{0,\OO}+\\
		%  \nonumber
		%&\, 
		%   \rho \, \|\nabla \cdot (\aaa_h+\uu_{m})\|_{\infty,\OO}\| \vv_h\|_{0,\OO}\|\ww-\ww_h\|_{0,\OO} +     \rho \, \|\nabla \cdot \aaa_h\|_{\infty,\OO}  \|\ww-\ww_h\|_{0,\OO} \| \vv_h\|_{0,\OO}
		% \Bigg\}\\ 
		%
		%
		%
		\nonumber
		\le&\, \small C\, \Bigg\{
		\sigma^2 \, \|\ww-\ww_h\|_{0,\OO}^2 +  \mu^2 \,|\ww-\ww_h|_{1,\OO}^2+
		\rho^2\, \|\nabla \uu_{m}\|_{\infty,\OO}^2\|\ww-\ww_h\|_{0,\OO}^2 +\\
		\nonumber
		&\, \small
		\lambda^2\, \| \nabla \cdot (\ww-\ww_h)\|_{0,\OO}^2  +
		\rho^2\,  \|\aaa_h+\uu_{m}\|_{\infty,\OO}^2\|\ww-\ww_h\|_{0,\OO}^2+
		\rho^2 \, \|\nabla \cdot (\aaa_h+\uu_{m})\|_{\infty,\OO}^2\|\ww-\ww_h\|_{0,\OO}^2 +\\
		\nonumber
		&\,   \small \rho^2 \, \|\nabla \cdot \aaa_h\|_{\infty,\OO}^2  \|\ww-\ww_h\|_{0,\OO}^2
		\Bigg\}^{1/2}\, \| \vv_h\|_{1,\OO}\\ 
		%
		%
		%
		% \nonumber
		% \le&\, C\, C_2\Bigg\{
		% \sigma^{1/2}+  \mu^{1/2} +
		%  \lambda^{1/2}+
		% \sigma^{-1/2}\rho\,  \|\aaa_h+\uu_{m}\|_{\infty,\OO}+
		%   \sigma^{-1/2}\rho\,  \, \|\nabla \cdot \uu_{m}\|_{\infty,\OO}+\\
		%   &\, 
		%   \sigma^{-1/2}\rho\,  \| \nabla \cdot \aaa_h\|_{\infty,\OO}\Bigg\} \,\nh{(\ww-\ww_h,p-p_h)}\, \| \vv_h\|_{1,\OO},
		%
		%
		\le&\, \small C\, C_2\, \nh{(\ww-\ww_h,p-p_h)}\, \| \vv_h\|_{1,\OO},
		\label{eq:5.27}
	\end{align}
	where \[C_2 \defi  \sigma^{1/2}+  \mu^{1/2} +
	\lambda^{1/2}+
	\sigma^{-1/2}\rho\,  \|\aaa_h+\uu_{m}\|_{\infty,\OO}+
	\sigma^{-1/2}\rho\,  \, \|\nabla \cdot \uu_{m}\|_{\infty,\OO}+
	\sigma^{-1/2}\rho\,  {h^{1-d/2}\|\aaa\|_{2,\OO}} .\]
	Now, to bound $I_6$, using H\"older inequality (with $L^2,L^4,L^4$ or $L^4,L^4,L^2$) and \eqref{prop_aaa_1}, we have
	\begin{align}
		\nonumber
		I_{6} =&\,
		-  \dfrac{\rho}{2} \, ( \nabla\cdot(\aaa_h-\aaa)\ww,\vv_h)-
		\rho\, ( \nabla \ww(\aaa_h-\aaa),\vv_h)\\
		\nonumber
		\le&\, C\, \Bigg\{
		\rho \, \| \nabla\cdot(\aaa_h-\aaa)\|_{0,\OO} \|\ww\|_{1,\OO} \|\vv_h\|_{1,\OO}+
		\rho\, \|\aaa-\aaa_h\|_{1,\OO}\|\ww\|_{2,\OO}\|\vv_h\|_{1,\OO}\Bigg\}\\ 
		%
		%
		%
		% \nonumber
		% \le&\, C \rho \, 
		%  \|\aaa-\aaa_h\|_{1,\OO} \, \|\ww\|_{2,\OO}\|\vv_h\|_{1,\OO}\\
		%
		%
		%
		\le&\, C \rho \, h^k\, 
		\| \aaa\|_{k+1,\OO} \, \|\ww\|_{2,\OO}\|\vv_h\|_{1,\OO}.
		%
		%
		%
		%=&\, I_3 + I_4.
		\label{eq:5.28}
	\end{align}
	
	On the other hand, to bound $I_7$, we need to decompose it in the follow way:
	\begin{align}
		\nonumber
		\nonumber
		I_7=&\,\small     \sum_{T \in \T_h}  \tau_T \,
		\Bigg( 
		\sigma\, (\ww-\ww_h)- \mu\, \Delta (\ww-\ww_h) +\LLh(\ww-\ww_h,p-p_h)
		, -\sigma\, \vv_h +   \mu \, \Delta \vv_h + \LLh(\vv_h,0) 
		\Bigg)_T  \nonumber \\
		\nonumber
		=&\,  \small \sum_{T \in \T_h}  \tau_T \,
		\Bigg( 
		\sigma\, (\ww-\ww_h) +\LLh(\ww-\ww_h,p-p_h)
		, -\sigma\, \vv_h +   \mu \, \Delta \vv_h + \LLh(\vv_h,0) 
		\Bigg)_T  \\ \nonumber
		&\,\small -
		\sum_{T \in \T_h}  \tau_T \,
		\Bigg( 
		\mu\, \Delta (\ww-\ww_h)
		, -\sigma\, \vv_h +   \mu \, \Delta \vv_h + \LLh(\vv_h,0) 
		\Bigg)_T 
		\nonumber \\
		%
		%
		%
		%=&\, I_7+ I_8.
		=&\, I_9+ I_{10}.
		\label{eq:5.29}
	\end{align}
	%
	%
	%Next,
	% \begin{align}
		% \nonumber
		% %
		% %
		% %
		% %
		% \nonumber
		%I_6
		%=&\,   \sum_{T \in \T_h}  \tau_T \,
		%\Bigg( 
		%\sigma\, (\ww-\ww_h)+ \rho\, (\nabla \uu_m)(\ww-\ww_h)+\rho\, \nabla(\ww-\ww_h)(\aaa_h+\uu_m) + \nabla(p-p_h) \\ \nonumber
		%&\, 
		%- \rho\, (\nabla \ww)(\aaa_h-\aaa) ,
		% -\sigma\, \vv_h +   \mu \, \Delta \vv_h + \LLh(\vv_h,0) 
		%\Bigg)_T \\
		%%
		%%
		%%
		%\nonumber
		%\le &\,  \Bigg\{ \sum_{T \in \T_h}  \Bigg[\tau_T \,
		%\sigma^2\, \|\ww-\ww_h\|^2_{0,T}+\tau_T \, \rho^2\,  \|\nabla \uu_m\|^2_{\infty,\OO} \|\ww-\ww_h\|^2_{0,T}+
		%\tau_T\, \|\aaa_h+\uu_m\|_{\infty,\OO} |\ww-\ww_h|_{1,T}^2+ \tau_T |p-p_h|_{1,T}^2 \\ \nonumber
		%&\, 
		%- \rho\, (\nabla \ww)(\aaa_h-\aaa) ,
		% -\sigma\, \vv_h +   \mu \, \Delta \vv_h + \LLh(\vv_h,0) 
		%\Bigg] \Bigg\}^{1/2}\\ 
		%\nonumber
		%&\,  \Bigg\{ \sum_{T \in \T_h}  \tau_T \,
		%\| -\sigma\, \vv_h +   \mu \, \Delta \vv_h + \LLh(\vv_h,0) 
		%\|_{0,T}^2 \Bigg\}^{1/2}\\ 
		%%\label{eq:5.30}
		%\end{align}
		%
		%On the other hand, using  \eqref{prop_tau1}-\eqref{prop_tau2} and Lemma \ref{Cinv}, we can see that
		Before that we continue bounding these new terms, we need to note that using \eqref{assum1d},  \eqref{prop_tau1}-\eqref{prop_tau2}
		and  Lemma \ref{Cinv}, we have
		\begin{align}
			%\nonumber
			%&\,
			\sum_{T \in \T_h}  \tau_T
			\, \left\| -\sigma\, \vv_h + \mu\, \Delta \vv_h + \LLh(\vv_h,0)  \right\|_{0,T}^2 %\\
			%
			%
			%
			%\nonumber
			%\le &\,C\,  \sum_{T \in \T_h}  \tau_T
			% \Big[ 
			%\sigma^2\, \|\vv_h\|_{0,T}^2 + \mu^2\, \| \Delta \vv_h\|_{0,T}^2+\|\LLh(\vv_h,0)\|_{0,T}^2\Big]\\
			%%
			%%
			%%
			%\nonumber
			%\le &\, C\, \sum_{T \in \T_h}  
			%\Big[\tau_T \,
			%\sigma^2\, \|\vv_h\|^2_{0,T} +   \mu^2\tau_T \, \|  \Delta \vv_h\|_{0,T}^2 + 
			%\rho^2\, \tau_T\, \|\nabla \uu_m\|_{\infty,\OO}^2 \| \vv_h\|_{0,\OO}^2 + 
			%\rho^2 \, \tau_T\, \|\aaa_h+\uu_m\|_{\infty,\OO}^2|  \vv_h|^2_{1,T}
			%\Big]\\
			%
			%
			%
			%\nonumber
			\le&\, C\, 
			\Bigg\{ \sigma +  \mu+ \sigma^{-1}\rho^2 \, \|\aaa_h+\uu_m\|_{\infty,\OO}^2
			\Bigg\}\, \| \vv_h\|_{1,\OO}^2.\label{eq:5.12ccc}
		\end{align}
		%for all $\vv_h\in \HH_h$.
		%
		Next, using Cauchy-Schwarz inequality, \eqref{assum1d} and \eqref{eq:5.12ccc}, we have
		\begin{align}
			\nonumber
			\nonumber
			I_9
			=&\,   \sum_{T \in \T_h}  \tau_T \,
			\Bigg( 
			\sigma\, (\ww-\ww_h) +\LLh(\ww-\ww_h,p-p_h)
			, -\sigma\, \vv_h +   \mu \, \Delta \vv_h + \LLh(\vv_h,0) 
			\Bigg)_T  \\ %\nonumber
			%
			%
			%
			%\le &\, \Bigg\{
			% \sum_{T \in \T_h} \Big[ \tau_T\, \sigma^2\, \|\ww-\ww_h\|^2_{0,T} +\tau_T\, \| \LLh(\ww-\ww_h,p-p_h)\|_{0,T}^2
			% \Bigg\}^{1/2}
			%\Bigg\{ \sum_{T \in \T_h}  \tau_T \,
			%\|-\sigma\, \vv_h +   \mu \, \Delta \vv_h + \LLh(\vv_h,0) \|^2_{0,T}
			%\Bigg\}^{1/2} \\ 
			%
			%
			%
			\le &\, C\, 
			\Bigg\{ \sigma^{1/2} +  \mu^{1/2}+ \sigma^{-1/2}\rho \, \|\aaa_h+\uu_m\|_{\infty,\OO}
			\Bigg\}\,  
			\nh{(\ww-\ww_h,p-p_h)} \, \| \vv_h\|_{1,\OO}.
			\label{eq:5.30}
		\end{align}
		To estimate $I_{10}$, we use the interpolation operator $\II_h \ww$ and the definition of $\etw$ and $\ew$ given in \eqref{def-eta-eh} join to \eqref{eq:5.12ccc} with Lemma \ref{Cinv}, and  we obtain
		\begin{align}
			\nonumber
			&\small I_{10}= \\\, -
			& \nonumber \small \sum_{T \in \T_h}  \tau_T \,
			\Bigg( 
			\mu\, \Delta (\ww-\ww_h)
			, -\sigma\, \vv_h +   \mu \, \Delta \vv_h + \LLh(\vv_h,0) 
			\Bigg)_T\\  \nonumber
			%
			%
			%
			%\le& \, 
			% \Bigg\{ \sum_{T \in \T_h} %\Big[ \tau_T \,
			% \mu\,^2  \tau_T \, \| \Delta (\ww-\ww_h)\|^2_{0,T} 
			% \Bigg\}^{1/2}
			%  \Bigg\{ \sum_{T \in \T_h}  \tau_T \,
			%\| -\sigma\, \vv_h +   \mu \, \Delta \vv_h + \LLh(\vv_h,0) \|_{0,T}^2
			%\Bigg\}^{1/2}
			%\nonumber \\
			%
			%
			%
			\le&\small \, C\, 
			\Bigg\{ \sigma^{1/2} +  \mu^{1/2}+ \sigma^{-1/2}\rho \, \|\aaa_h+\uu_m\|_{\infty,\OO}
			\Bigg\}\,  
			\Bigg\{ \sum_{T \in \T_h} 
			\mu  h_T^2 \, \Big[ \| \Delta \etw\|^2_{0,T}  + \| \Delta \ew\|^2_{0,T} \Big]
			\Bigg\}^{1/2}\, \|\vv_h\|_{1,\OO}
			\nonumber \\
			%
			%
			%
			%\le& \, C\, 
			%\Bigg\{ \sigma^{1/2} +  \mu^{1/2}+ \sigma^{-1/2}\rho \, \|\aaa_h+\uu_m\|_{\infty,\OO}
			%\Bigg\}\,  
			% \Bigg\{ \sum_{T \in \T_h} \, \Big[ 
			% \mu  h_T^2 \,\| \etw\|^2_{2,T}  +\mu\, |\ew|^2_{1,T} \Big]
			% \Bigg\}^{1/2}\, \|\vv_h\|_{1,\OO}
			%\nonumber \\
			%
			%
			%
			\le& \small \, C\, 
			\Bigg\{ \sigma^{1/2} +  \mu^{1/2}+ \sigma^{-1/2}\rho \, \|\aaa_h+\uu_m\|_{\infty,\OO}
			\Bigg\}\,  
			\Bigg\{ \sum_{T \in \T_h} \, \Big[ 
			\mu  h_T^2\, \| \etw\|^2_{2,T}  +\mu\, |\ww-\ww_h |^2_{1,T}  \nonumber \\ 
			& \small +\mu\, |\etw|^2_{1,T} \Big]
			\Bigg\}^{1/2}\, \|\vv_h\|_{1,\OO}
			\nonumber \\
			%
			%
			%
			%\le& \, C\, 
			%\Bigg\{ \sigma^{1/2} +  \mu^{1/2}+ \sigma^{-1/2}\rho \, \|\aaa_h+\uu_m\|_{\infty,\OO}
			%\Bigg\}\,  
			% \Bigg\{ 
			% \mu  h^{2k}\, \| \ww\|^2_{k+1,\OO}  +\nh{(\ww-\ww_h,p-p_h)}^2 
			% \Bigg\}^{1/2}\, \|\vv_h\|_{1,\OO}
			%\nonumber \\
			%
			%
			%
			\le& \small \, C\, 
			\Bigg\{ \sigma^{1/2} +  \mu^{1/2}+ \sigma^{-1/2}\rho \, \|\aaa_h+\uu_m\|_{\infty,\OO}
			\Bigg\}\,  
			\Bigg\{ 
			\mu^{1/2}  h^{k}\, \| \ww\|_{k+1,\OO}  +\nh{(\ww-\ww_h,p-p_h)} 
			\Bigg\} \, \|\vv_h\|_{1,\OO}.
			\nonumber \\
			%
			%
			%
			%\nonumber
			%\le& \, C\, 
			%\Bigg\{ \sigma^{1/2} +  \mu^{1/2}+ \sigma^{-1/2}\rho \, \|\aaa_h+\uu_m\|_{\infty,\OO}
			%\Bigg\}\,  
			% \mu^{1/2}  \, h^{k} \, \| \ww\|_{k+1,\OO}   \|\vv_h\|_{1,\OO} +\\ 
			% & 
			%  C\, 
			%\Bigg\{ \sigma^{1/2} +  \mu^{1/2}+ \sigma^{-1/2}\rho \, \|\aaa_h+\uu_m\|_{\infty,\OO}
			%\Bigg\}\,  
			%\nh{(\ww-\ww_h,p-p_h)}
			%\, \|\vv_h\|_{1,\OO}.
			\label{eq:5.31}
		\end{align}
		We may now to bound the last term on \eqref{eq:5.26}, using  Cauchy-Schwarz and H\"older inequalities (with $L^4,L^4,L^2$), \eqref{eq:5.12ccc}  and we obtain
		\begin{align}
			\nonumber
			\nonumber
			&I_8 =\\
			&\,   \small \sum_{T \in \T_h}  \tau_T \,
			\Bigg( \rho\, (\nabla \ww)(\aaa-\aaa_h)
			, -\sigma\, \vv_h +   \mu \, \Delta \vv_h + \LLh(\vv_h,0) 
			\Bigg)_T  \\ \nonumber
			%
			%
			%
			%\le &\,     \sum_{T \in \T_h}  \rho \,\tau_T \, \|\aaa-\aaa_h\|_{0,4,T}\|\nabla \ww\|_{0,4,T} 
			%\| -\sigma\, \vv_h +   \mu \, \Delta \vv_h + \LLh(\vv_h,0) \|_{0,T}
			% \\ \nonumber
			%
			%
			%
			\le &\,  \small    \rho\, \left\{\sum_{T \in \T_h}  \tau_T \, \|\aaa-\aaa_h\|_{0,4,T}^2\|\nabla \ww\|_{0,4,T}^2 \right\}^{1/2}
			\left\{\sum_{T \in \T_h}  \tau_T \, \| -\sigma\, \vv_h +   \mu \, \Delta \vv_h + \LLh(\vv_h,0) \|_{0,T}^2\right\}^{1/2}
			\\ \nonumber
			\le &\,   \small  \rho \sigma^{-1/2} \, \left\{\sum_{T \in \T_h}  \|\aaa-\aaa_h\|_{0,4,T}^4 \right\}^{1/4}\,
			\, \left\{\sum_{T \in \T_h} \|\nabla \ww\|_{0,4,T}^4 \right\}^{1/4}\,
			\left\{\sum_{T \in \T_h}  \tau_T \, \| -\sigma\, \vv_h +   \mu \, \Delta \vv_h + \LLh(\vv_h,0) \|_{0,T}^2\right\}^{1/2}
			\\ \nonumber
			\le &\,\small C\,      \rho \,\sigma^{-1/2} \,\|\ww\|_{2,\OO}  \|\aaa-\aaa_h\|_{1,\OO}
			\Bigg\{    \sum_{T \in \T_h} \tau_T \| -\sigma\, \vv_h +   \mu \, \Delta \vv_h + \LLh(\vv_h,0) \|_{0,T}^2 \Bigg\}^{1/2}
			\\ %\nonumber
			%
			%
			%
			%\le &\, C\,  h^{k}\,  \Bigg\{ \sigma^{1/2} +  \mu^{1/2}+ \sigma^{-1/2}\rho \, \|\aaa_h+\uu_m\|_{\infty,\OO}
			%\Bigg\} \,  \rho \,\sigma^{-1/2}\, \|\ww\|_{2,\OO} \|\aaa\|_{k+1,\OO} \|\vv_h\|_{1,\OO}
			% \\ 
			%
			%
			%
			\le &\,\small C\,  h^{k}\,  \Bigg\{ \rho  +\rho  \sigma^{-1/2}   \mu^{1/2}+ \sigma^{-1}\rho^2 \, \|\aaa_h+\uu_m\|_{\infty,\OO}
			\Bigg\} \,  \|\ww\|_{2,\OO} \|\aaa\|_{k+1,\OO} \|\vv_h\|_{1,\OO}.
			\label{eq:5.310}
		\end{align}
		%
		%
		%\newpage 
		% \begin{align}
			% \nonumber
			% %
			% %
			% %
			% %
			% \nonumber
			%I_6
			%=&\,   \sum_{T \in \T_h}  \tau_T \,
			%\Bigg( \rho\, (\nabla \ww)(\aaa-\aaa_h)
			%, -\sigma\, \vv_h +   \mu \, \Delta \vv_h + \LLh(\vv_h,0) 
			%\Bigg)_T  \\ \nonumber
			%%
			%%
			%%
			%\le &\, C\,    \sum_{T \in \T_h}  \rho \,\tau_T \, \|\aaa-\aaa_h\|_{1,T}\|\ww\|_{2,T} 
			%\| -\sigma\, \vv_h +   \mu \, \Delta \vv_h + \LLh(\vv_h,0) \|_{0,T}
			% \\ \nonumber
			% %
			%%
			%%
			%\le &\, C\,      \rho \,\sigma^{-1/2} \,\|\ww\|_{2,\OO}  \|\aaa-\aaa_h\|_{1,\OO}
			%\Bigg\{    \sum_{T \in \T_h} \tau_T \| -\sigma\, \vv_h +   \mu \, \Delta \vv_h + \LLh(\vv_h,0) \|_{0,T}^2 \Bigg\}^{1/2}
			% \\ \nonumber
			%  %
			%%
			%%
			%\le &\, C\,  h^{k}\,  \Bigg\{ \sigma^{1/2} +  \mu^{1/2}+ \sigma^{-1/2}\rho \, \|\aaa_h+\uu_m\|_{\infty,\OO}
			%\Bigg\} \,  \rho \,\sigma^{-1/2}\, \|\ww\|_{2,\OO} \|\aaa\|_{k+1,\OO} \|\vv_h\|_{1,\OO}
			% \\ 
			%  %
			%%
			%%
			%\le &\, C\,  h^{k}\,  \Bigg\{ \rho  +\rho  \sigma^{-1/2}   \mu^{1/2}+ \sigma^{-1}\rho^2 \, \|\aaa_h+\uu_m\|_{\infty,\OO}
			%\Bigg\} \,  \|\ww\|_{2,\OO} \|\aaa\|_{k+1,\OO} \|\vv_h\|_{1,\OO}.
			%%\label{eq:5.310}
			% \end{align}
		%
		Thus, from \eqref{eq:5.26}-\eqref{eq:5.29} and \eqref{eq:5.30}-\eqref{eq:5.310}, we get 
		%\begin{align}
		%\nonumber
		% I_4%=& \,(p-p_h,\nabla \cdot \vv_h) \\
		% %
		% %
		% %
		%% \nonumber
		%% \le &\,  C\, \Bigg\{
		%% \Bigg[ 
		%% \sigma^{1/2}+  \mu^{1/2} +
		%%  \lambda^{1/2}+
		%% \sigma^{-1/2}\rho\,  \|\aaa_h+\uu_{m}\|_{\infty,\OO}+
		%%   \sigma^{-1/2}\rho\,  \, \|\nabla \cdot \uu_{m}\|_{\infty,\OO}+\\ 
		%%   &\, 
		%%   \sigma^{-1/2}\rho\,  \| \nabla\cdot \aaa_h\|_{\infty,\OO}\Bigg] \,\nh{(\ww-\ww_h,p-p_h)}+ h^{k}\, M_4  \|\ww\|_{2,\OO}  \Bigg\}\, \|\vv_h\|_{1,\OO}.
		%%
		%%
		%%
		% \nonumber
		% \le &\,  C\, \Bigg\{
		%C_2\,\nh{(\ww-\ww_h,p-p_h)}+ h^{k}\, M_4  \|\ww\|_{2,\OO}  \Bigg\}\, \|\vv_h\|_{1,\OO}.
		%\label{eq:5.31b}
		%\end{align}
		%
		\begin{equation}
			I_4
			\le \,  C\, \Bigg\{
			C_2\,\nh{(\ww-\ww_h,p-p_h)}+ h^{k}\, M_4  \|\ww\|_{2,\OO}  \Bigg\}\, \|\vv_h\|_{1,\OO}.
			\label{eq:5.31b}
		\end{equation}
		%
		%
		%for all $\vv_h\in \HH_h$.\\
		%
		%
		%
		%
		%
		%
		%
		%Thus, using \eqref{eq:5.31b},  \eqref{eq:5.39} and \eqref{eq:5.40},  we can deduce that
		Next, connecting \eqref{eq:5.40} and  \eqref{eq:5.31b} with  \eqref{eq:5.39},  we can deduce that
		%
		%
		%
		%\begin{align}
		%\nonumber 
		% &\, \|p-p_h\|_{0,\OO}^2 \\
		%%
		%%
		%%
		% \nonumber
		% \le &\,  C\, \Bigg\{
		% \Bigg[ 
		% \sigma^{1/2}(h+1)+  \mu^{1/2} +
		%  \lambda^{1/2}+
		%\rho\, \Big( h^2 \mu^{-1} \sigma^{1/2}+h\mu^{-1/2}+\sigma^{-1/2}\Big)\,\|\uu_m+ \aaa_h   \|_{\infty,\OO}+
		%\\ \nonumber
		%   &\, 
		%   \sigma^{-1/2}\rho\,   \|\nabla \cdot \uu_{m}\|_{\infty,\OO}+   \sigma^{-1/2}\rho\,  \| \nabla\cdot \aaa_h\|_{\infty,\OO}\Bigg] \,\nh{(\ww-\ww_h,p-p_h)}+\\
		%  &\,  
		% h^{k}\,  \Bigg[ \rho  +\rho  \sigma^{-1/2}   \mu^{1/2}+ \sigma^{-1}\rho^2 \, \|\aaa_h+\uu_m\|_{\infty,\OO}
		%\Bigg] \,  \|\ww\|_{2,\OO} \|\aaa\|_{k+1,\OO} 
		%  \Bigg\}\,  \|\vvt\|_{1,\OO} \nonumber \\
		%  %
		%%
		%%
		% = &\,  C\, \Bigg\{
		%M_3 \,\nh{(\ww-\ww_h,p-p_h)}+
		%  M_4\, 
		% h^{k} \,  \|\ww\|_{2,\OO} 
		%  \Bigg\}\,  \|\vvt\|_{1,\OO}.
		%%
		%%
		%%
		%\label{eq:5.41}
		%\end{align}
		%%
		%
		% 30/10/25: Detalles en lo comentado anteriormente
		%
		\begin{equation}
			\|p-p_h\|_{0,\OO}^2 
			\le \,  C\, \Bigg\{
			M_3 \,\nh{(\ww-\ww_h,p-p_h)}+
			M_4\, 
			h^{k} \,  \|\ww\|_{2,\OO} 
			\Bigg\}\,  \|\vvt\|_{1,\OO}.
			\label{eq:5.41}
		\end{equation}
		%
		%
		%
		%
		%
		%Finally, using estimate \eqref{eq:5.32} in \eqref{eq:5.41}  with Theorem \ref{convergencia} and H\"older inequality ($2ab \le a^2+b^2$), we conclude that
		%
		% 30/10/25: El resultado se sigue a partir de lo siguiente (elimine el detalle).
		% 
		Finally, using estimate \eqref{eq:5.32} in \eqref{eq:5.41}  with Theorem \ref{convergencia} and H\"older inequality ($2ab \le a^2+b^2$) the result follow.
		%
		%
		%
		%\begin{align}
		%\nonumber 
		% &\, \|p-p_h\|_{0,\OO} \\ \nonumber
		%%
		%%
		%%
		%\le&\, C\, h^k\, 
		%\Bigg\{
		%M_3 \, \Big[ 
		%M_1 \,   \|\ww\|_{k+1,\OO}+
		%M_2 \, 
		%\|p\|_{k,\OO}
		%\Big] +
		%  M_4\, 
		% \,  \|\ww\|_{2,\OO} 
		%  \Bigg\} \\
		%  %
		%%
		%%
		%\nonumber
		%=&\, C\, h^k\, 
		%\Bigg\{
		% \Big( 
		%M_1M_3+M_4\Big) \,   \|\ww\|_{k+1,\OO}+
		%M_2M_3 \, 
		%\|p\|_{k,\OO}
		%  \Bigg\},
		%\end{align}
		%%
		%%
		%%
		%%\newpage
		%and the result follow.
		%
		%
		%
	\end{proof}
\section{Numerical experiments}\label{Sec:Numerics}

In this section, we present three series of numerical experiments in order to illustrate the expected results.
The numerical routines have been built using the open-source finite element libraries FEniCS \cite{alnaes_fenics_2015}.
The legends of the different convergence curves follow the following notation:

\[ e_0(p) \defi \Vert p-p_h \Vert_{0,\OO},\qquad  e_1(\ww) \defi \vert \ww-\ww_h \vert_{1,\Omega} \qquad \text{and}\qquad  e_0(\ww) \defi \Vert \ww-\ww_h \Vert_{0,\Omega}.\]

We first test the convergence of the stabilized method  by means of an academic problem with a smooth analytical
solution. Next, we test the robustness of the pressure recovery process with respect to perturbations on the data. Finally, in the third example, we recover the velocity and reconstruct
the pressure from a Navier-Stokes simulation.

\subsection{\bf Experiment 1: {The Kovasznay flow solution}}

In this case we have considered $\OO \defi (-1/2,3/2)\times (0,2)$ and we choose the data 
$\ff$  and $g$ so that the unknowns are defined as follows:
\begin{align*}
	&\ww = \begin{pmatrix}  1-e^{\zeta x}\cos(2\pi y) \\ \frac{\zeta}{2\pi} e^{\zeta x}\sin(2\pi y) \end{pmatrix}, \qquad p = \frac{1}{2}e^{2\zeta x} -\frac{1}{8\zeta} (e^{3\zeta}-e^{-\zeta}) , \qquad \uu =  \begin{pmatrix} x \\ -y  \end{pmatrix},
\end{align*}
where $\zeta \defi \displaystyle \frac{1}{2\mu} \sqrt{\frac{1}{4\mu^2}+4\pi^2}${, and $\uu_m = \uu-\ww$}.  For this experiment, we are assessing our scheme by using different values for $\mu$, which are $1, 10^{-1}, 10^{-2},$ and $10^{-3}$.  {Moreover, we set $\rho=\sigma=1.0$, and the stabilization parameters are given by $\lambda=0.5$ and $\delta=0.001$.} 

Recall that $\aaa$ represents an approximation of the noise $\ww$ at a previous time step (cf. Remark \ref{remark2}). For this reason, we start by using $\aaa=0.9\,\ww$ (and so $\aaah = 0.9 \, \II_h(\ww)$)}, for different { approximation} degrees {$k$}. The error curves are depicted in Figure~\ref{curve:ex1-case1}, where we observe that all  error norms exhibit the expected convergence rates.
Next, we extend the scheme to address the model problem (P) with $\aaa = \displaystyle  \ww$. This nonlinear differential problem is referred to as the perturbed Navier–Stokes problem, and it is solved by extending the stabilized formulation \eqref{stab} as follows: Find $(\ww_h,p_h)\in \boldsymbol{H}_h\times Q_h$ such that
\begin{equation}\label{stab:NS}
\begin{split}
	&A(\ww^{}_h,p^{}_h;\vv_h,q_h) + \rho\, ((\nabla \ww^{}_h)(\ww^{}_h+\uu_{m}),\vv_h) + \dfrac{\rho}{2} \, \left( (\nabla \cdot \ww^{}_h)\, \ww^{}_h , \vv_h\right)
	\\ 
	&	+	\sum_{T \in \T_h}  \tau_T \, \Big(
	\sigma\, \ww^{}_h  - \mu \, \Delta \ww^{}_h + \LLh^{\boldsymbol{n}}(\ww^{}_h,\ww^{}_h, p_h),
	\, - \sigma\, \vv_h +   \mu \, \Delta \vv_h + \LLh^{\boldsymbol{n}}(\vv_h,\ww^{}_h,q_h)
	\Big)_{T} \\
	=& (\ff,\vv_h)+
	(q_h,g) + \lambda\, (g,\nabla \cdot \vv_h)+ \sum_{T \in \T_h}  \tau_T \, \Big(\ff , 
	-\sigma\, \vv_h +   \mu \, \Delta \vv_h + \LLh^{\boldsymbol{n}}(\vv_h, \ww^{}_h, q_h) \Big)_{T},
\end{split}
\end{equation}
where $A(\cdot; \cdot)$ is defined as in  \eqref{def-form-A}, and
\[\LLh^{\boldsymbol{n}} (\ww_h, \zz_h, p_h) \defi \rho\, (\nabla \uu_{m})\ww_h + \rho\, (\nabla \ww_h)(\zz_h+\uu_{m}) + \nabla p_h\,.\]
This nonlinear problem is solved
using Picard's scheme, and the iterations are stopped when the error between two consecutive approximations is smaller than $\textrm{tol} = 10^{-6}$. The results are displayed in Figure \ref{curve:ex1-case2}, where we observe once again that the errors 
{exhibit the expected orders of convergence.} One further interesting remark to be made is that all the errors show a decreasing behavior as the viscosity $\mu$ decreases.  A similar pattern has been observed before (see, e.g.,  \cite{carber2023}) for different pressure recovery strategies.  To our best knowledge, the mathematical explanation of this fact remains an open question.

\begin{figure}[H]
\centering
\begin{tabular}{ccc}
	$k=1$ & $k=2$ & $k=3$\\
	\includegraphics[width=4.5cm,height=4.0cm]{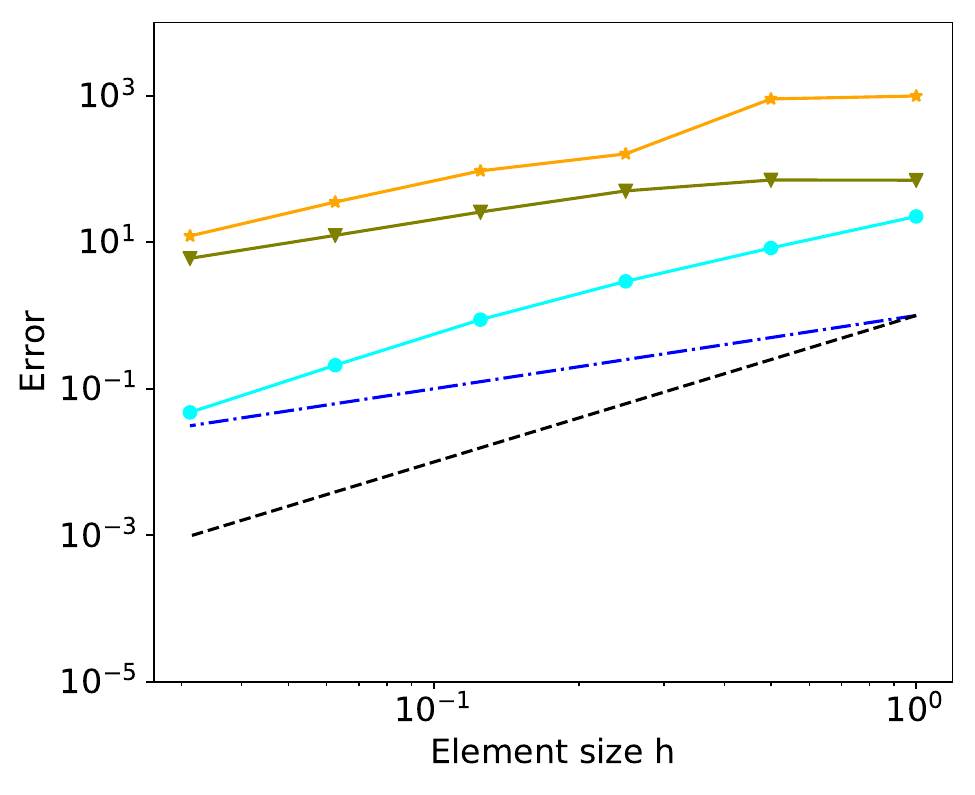}&
	\includegraphics[width=4.5cm,height=4.0cm]{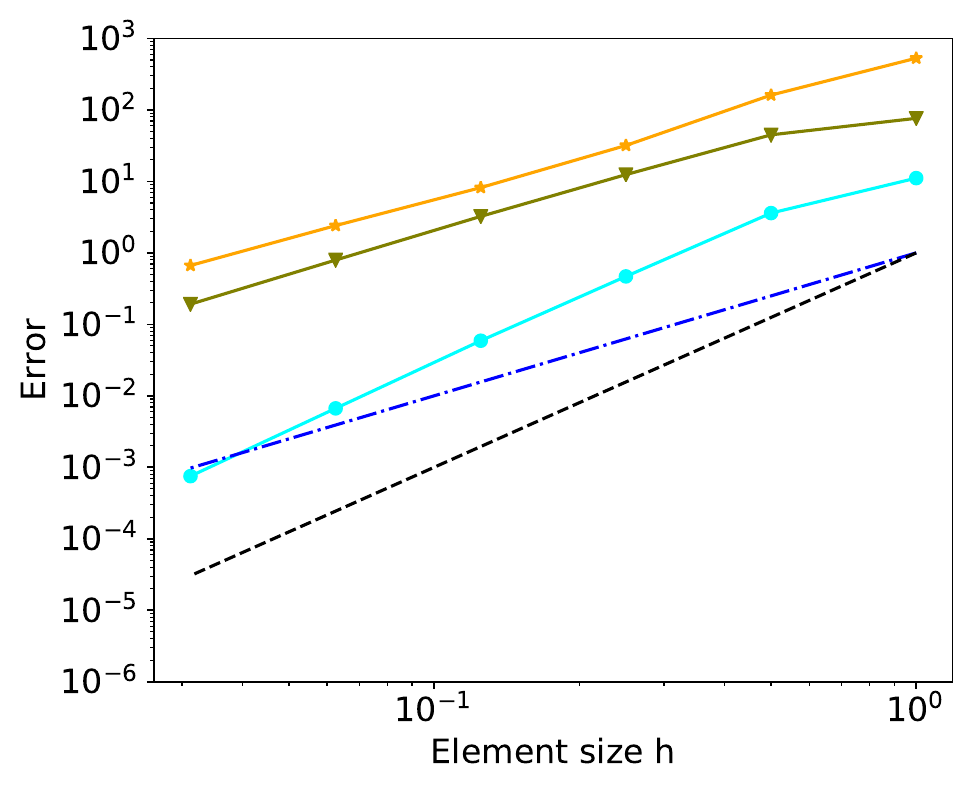}&
	\includegraphics[width=4.5cm,height=4.0cm]{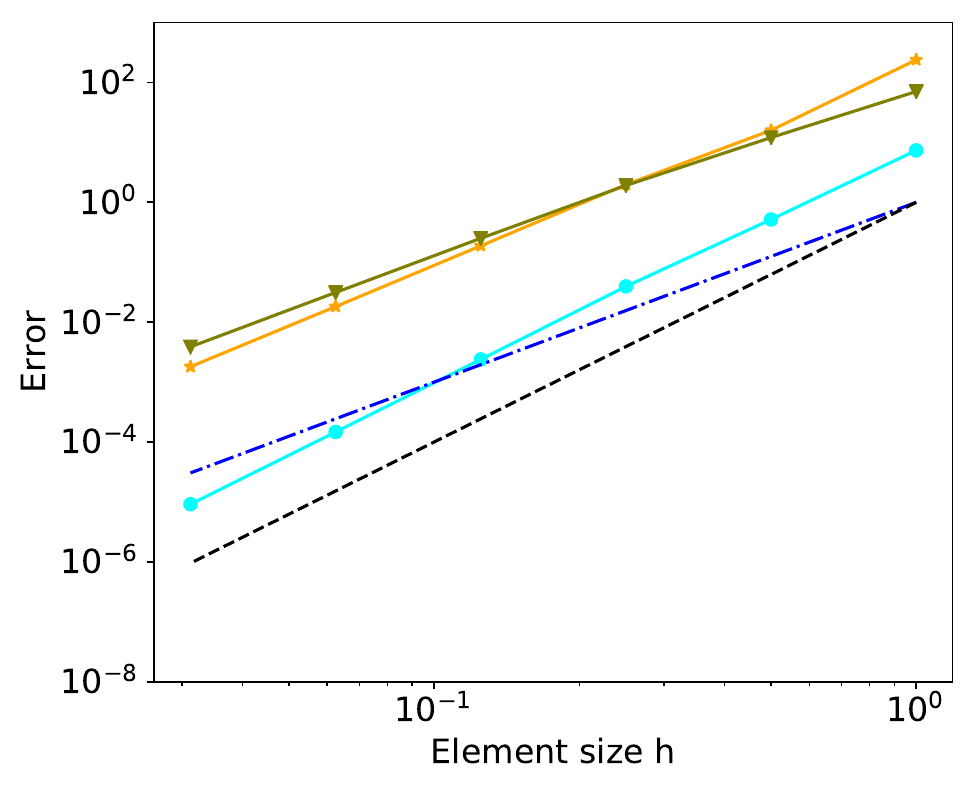}\\
	\includegraphics[width=4.5cm,height=4.0cm]{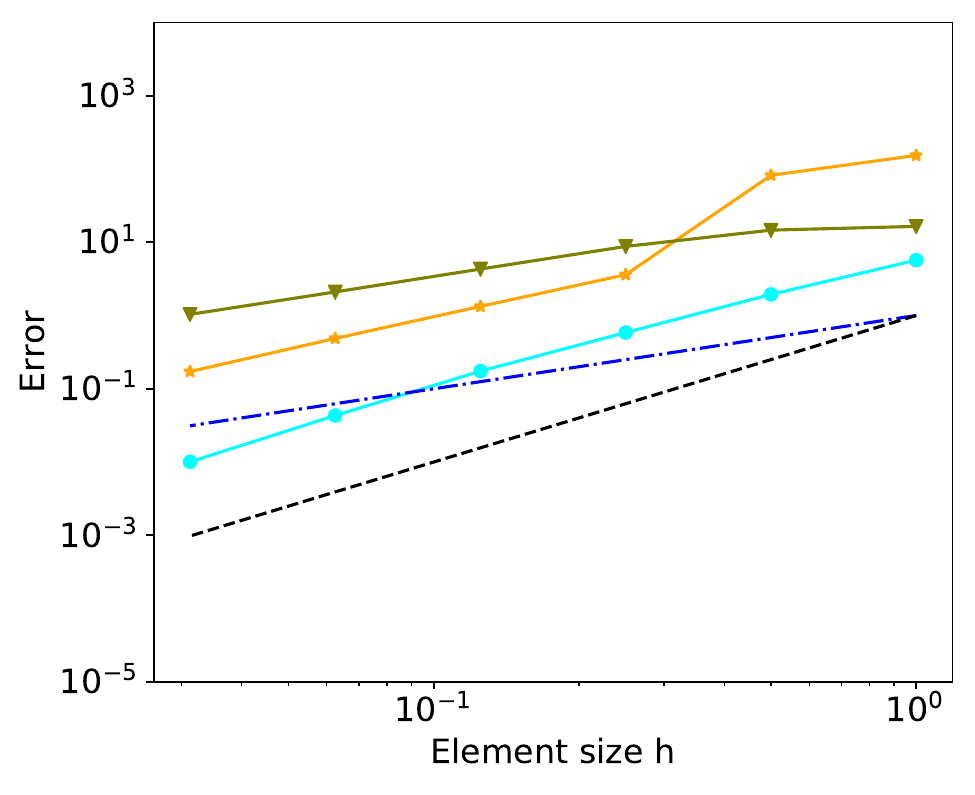}&
	\includegraphics[width=4.5cm,height=4.0cm]{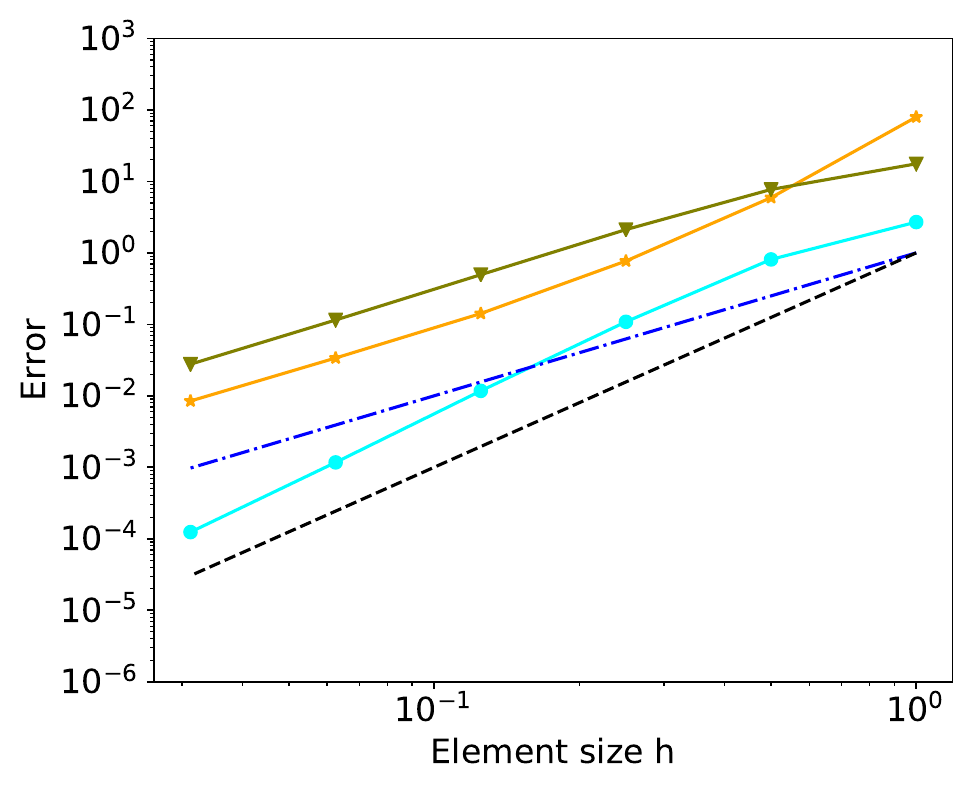}&
	\includegraphics[width=4.5cm,height=4.0cm]{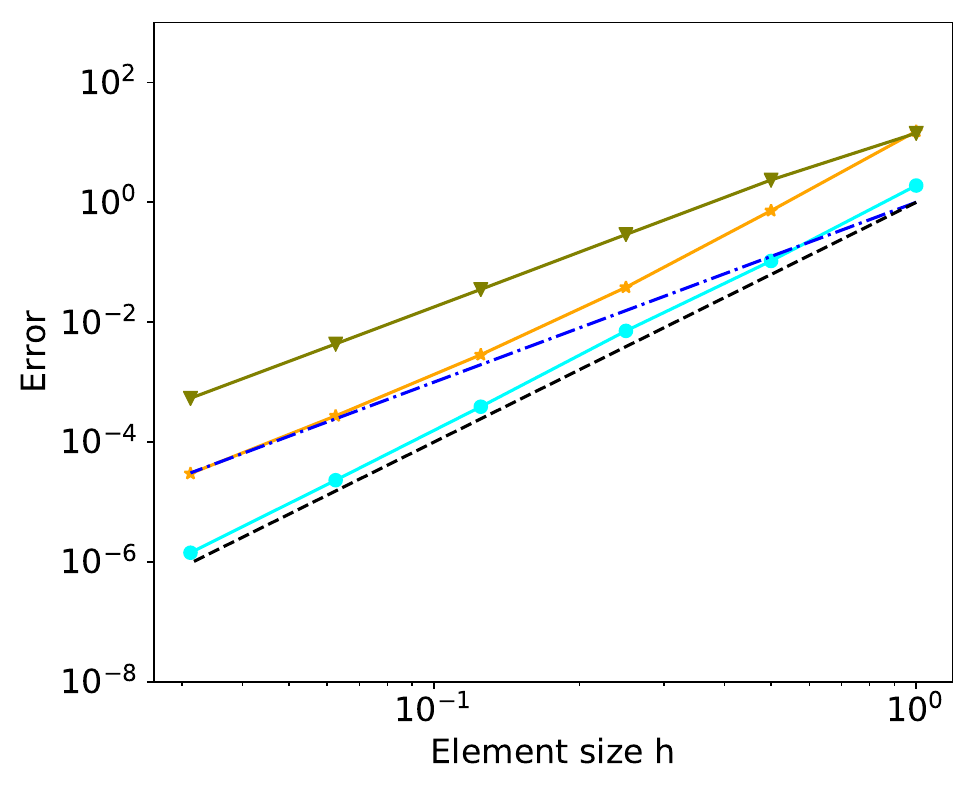}\\
	\includegraphics[width=4.5cm,height=4.0cm]{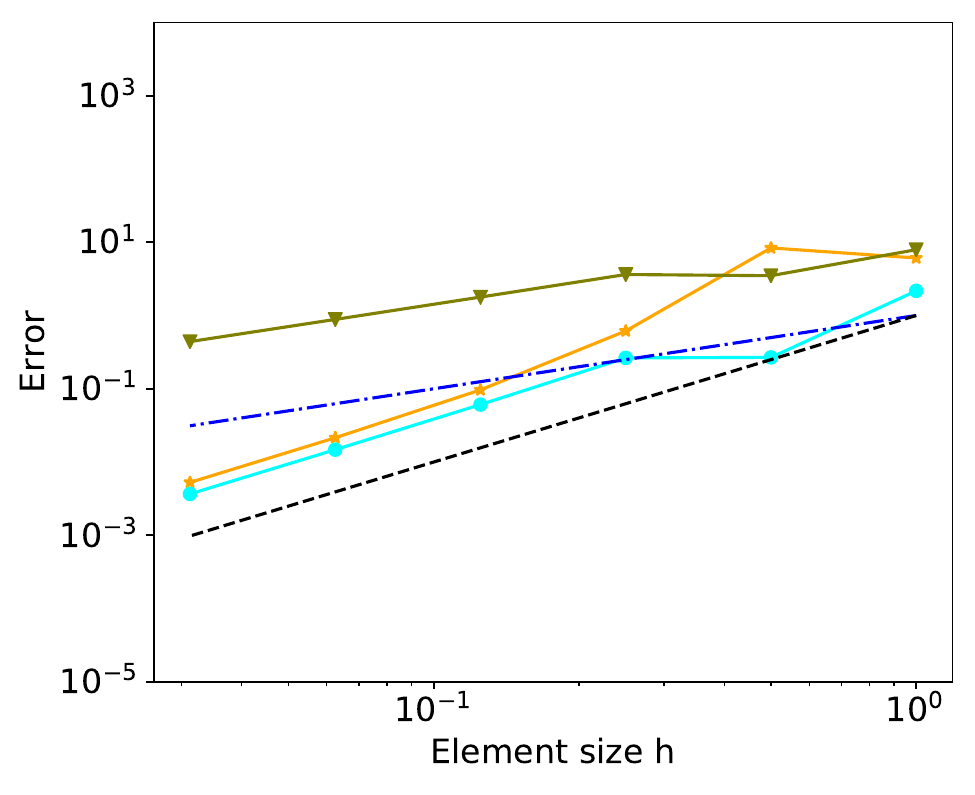}&
	\includegraphics[width=4.5cm,height=4.0cm]{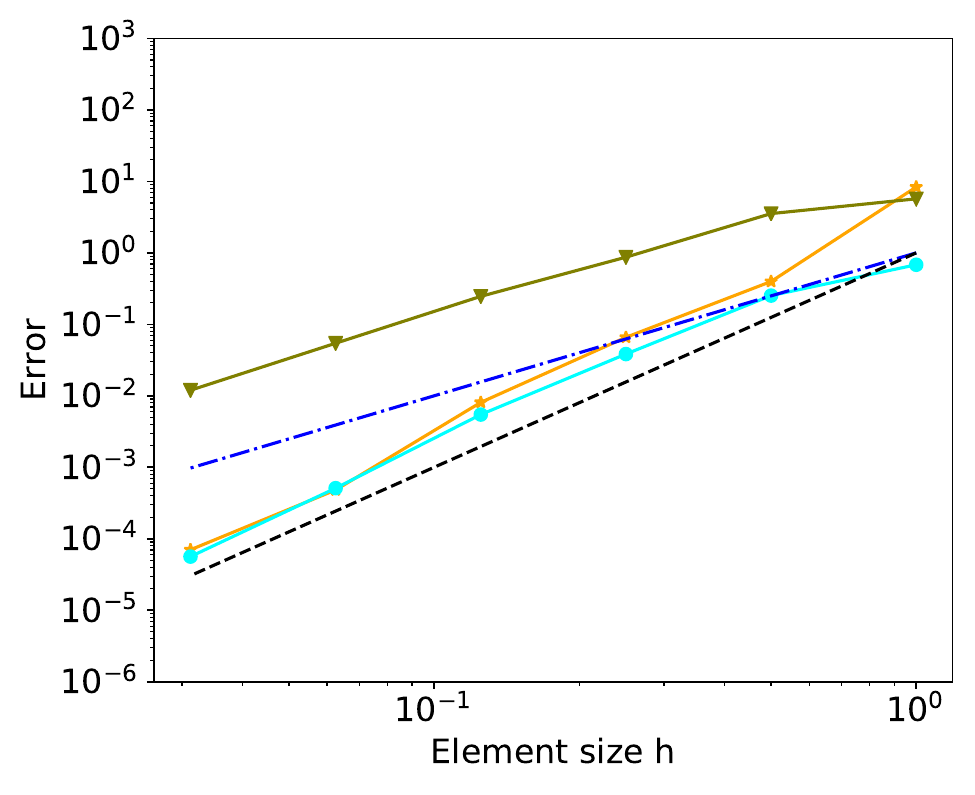}&
	\includegraphics[width=4.5cm,height=4.0cm]{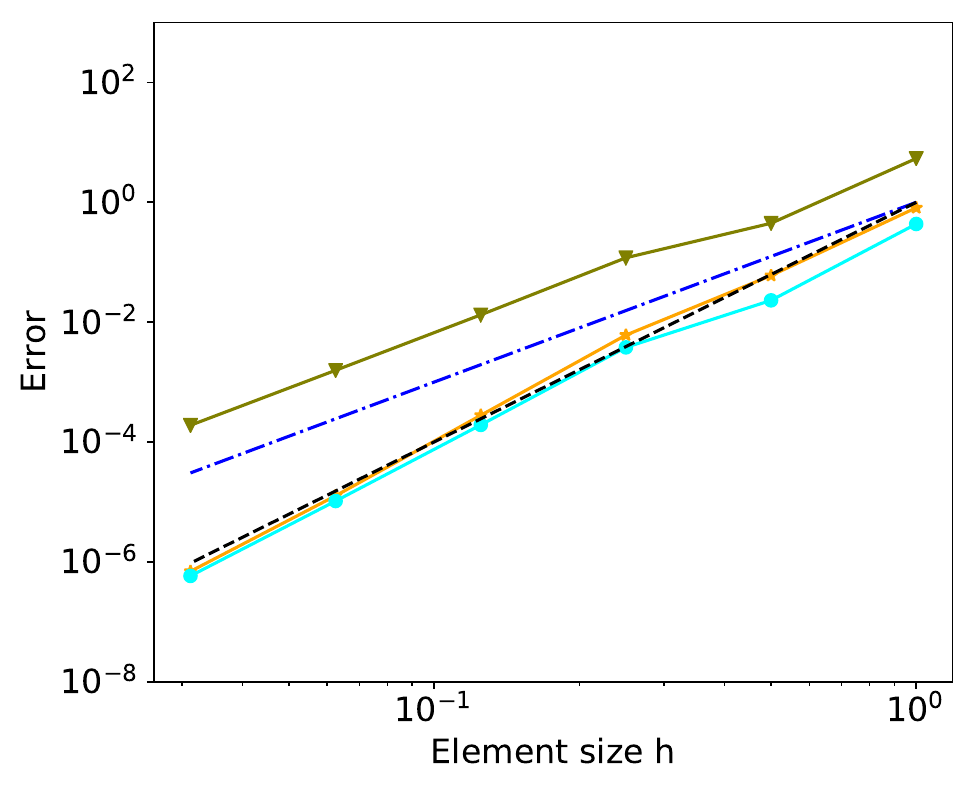}\\
	\includegraphics[width=4.5cm,height=4.0cm]{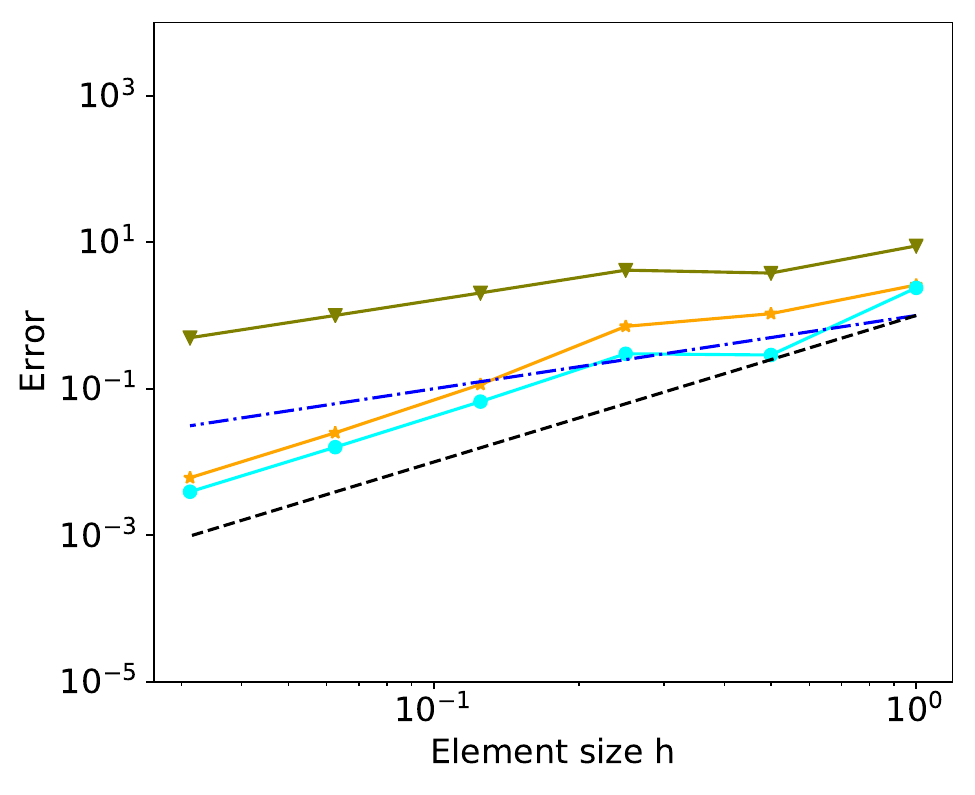}&
	\includegraphics[width=4.5cm,height=4.0cm]{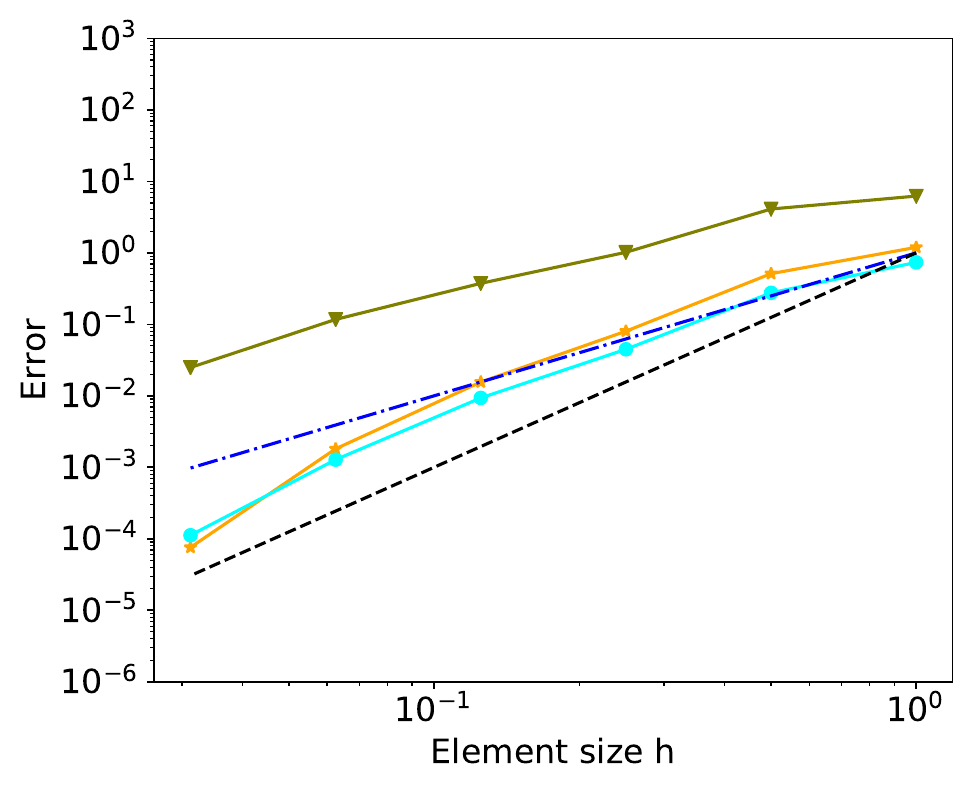}&
	\includegraphics[width=4.5cm,height=4.0cm]{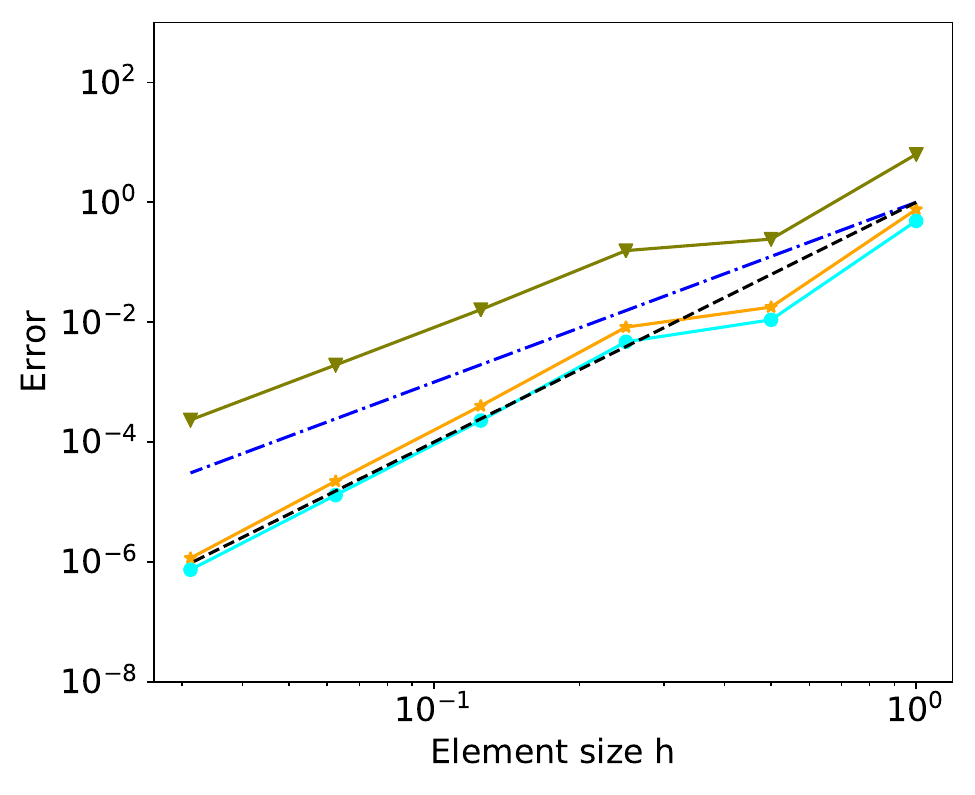}\\
	\multicolumn{3}{c}{\includegraphics[width=12.5cm,height=0.7cm]{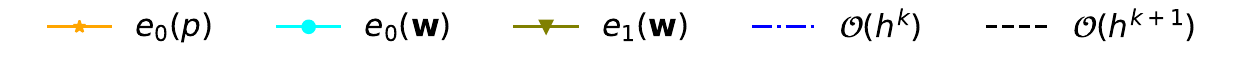}} 
\end{tabular}
\vspace{-0.3cm}
\caption{{\bf Kovasznay {flow for the model problem with} $\aaah = 0.9\II_h(\ww)$.} 
	{Convergence histories for linear elements ($k=1$) (left), quadratic elements ($k=2$) (center) and cubic elements (right).} 
	From top to bottom the values of $\mu$ are $1, 10^{-1}, 10^{-2},$ and $10^{-3}$.} 	
\label{curve:ex1-case1}
\end{figure}

\begin{figure}[H]
\centering
\begin{tabular}{ccc}
	$k=1$ & $k=2$ & $k=3$\\
	\includegraphics[width=4.5cm,height=4.0cm]{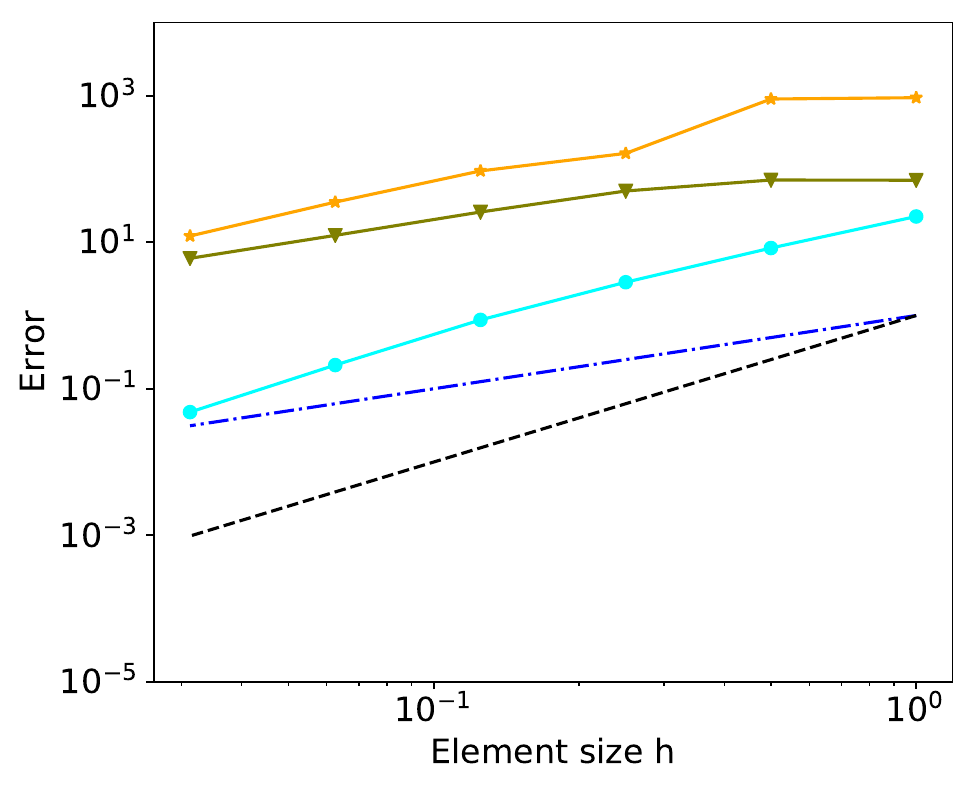}&
	\includegraphics[width=4.5cm,height=4.0cm]{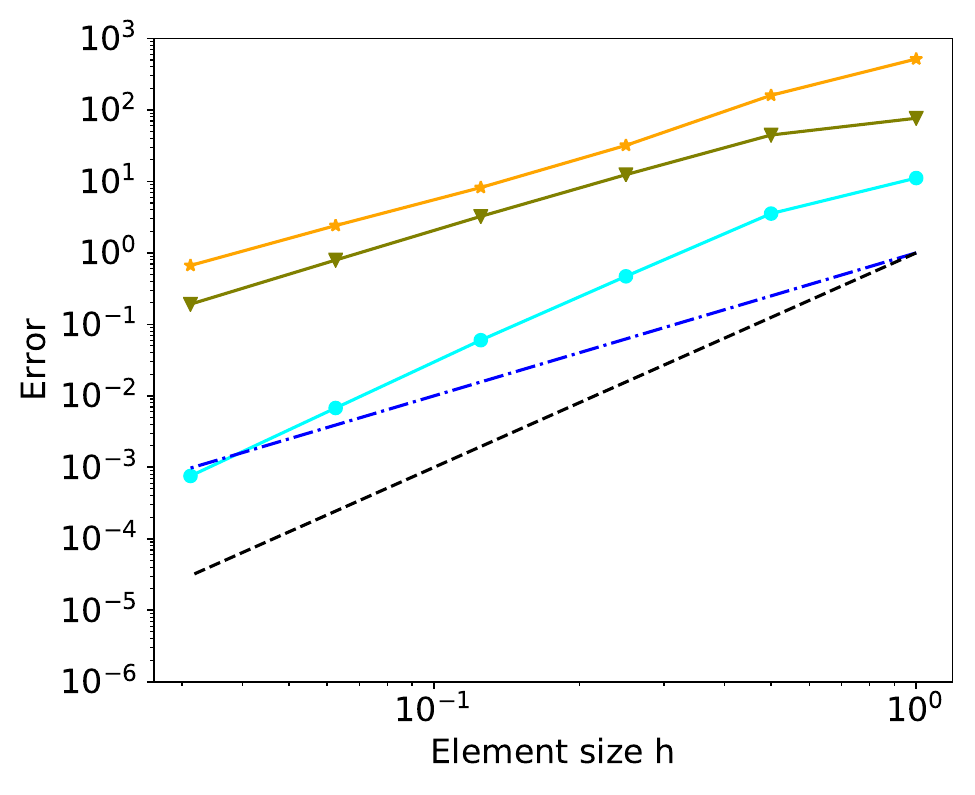}&
	\includegraphics[width=4.5cm,height=4.0cm]{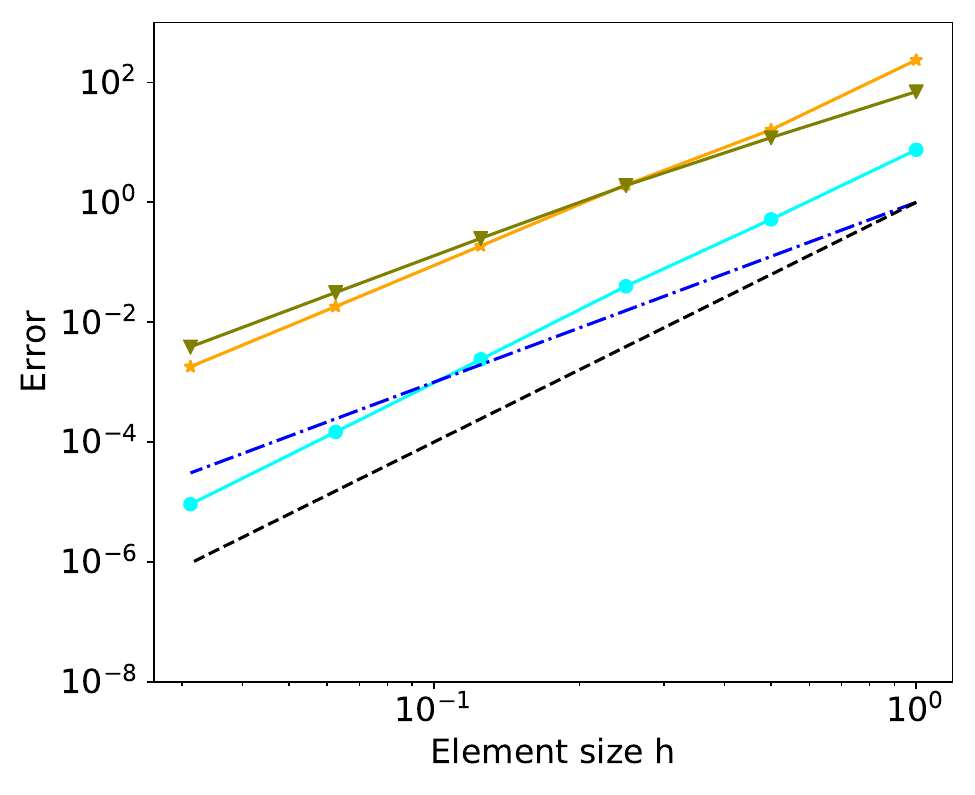}\\
	\includegraphics[width=4.5cm,height=4.0cm]{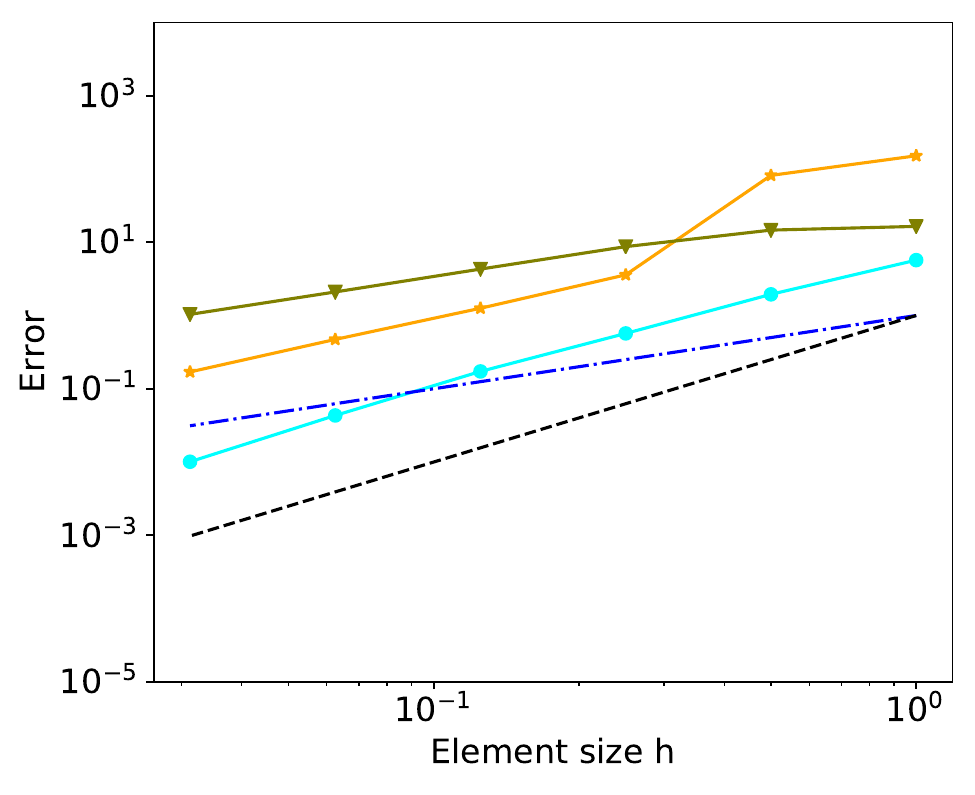}&
	\includegraphics[width=4.5cm,height=4.0cm]{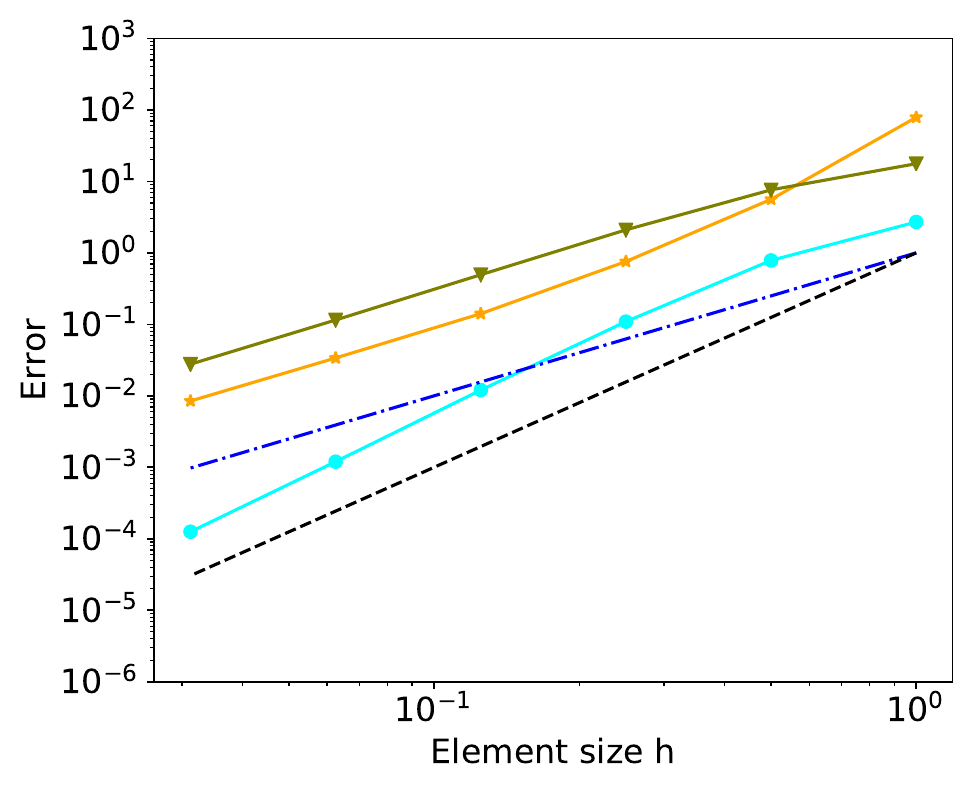}&
	\includegraphics[width=4.5cm,height=4.0cm]{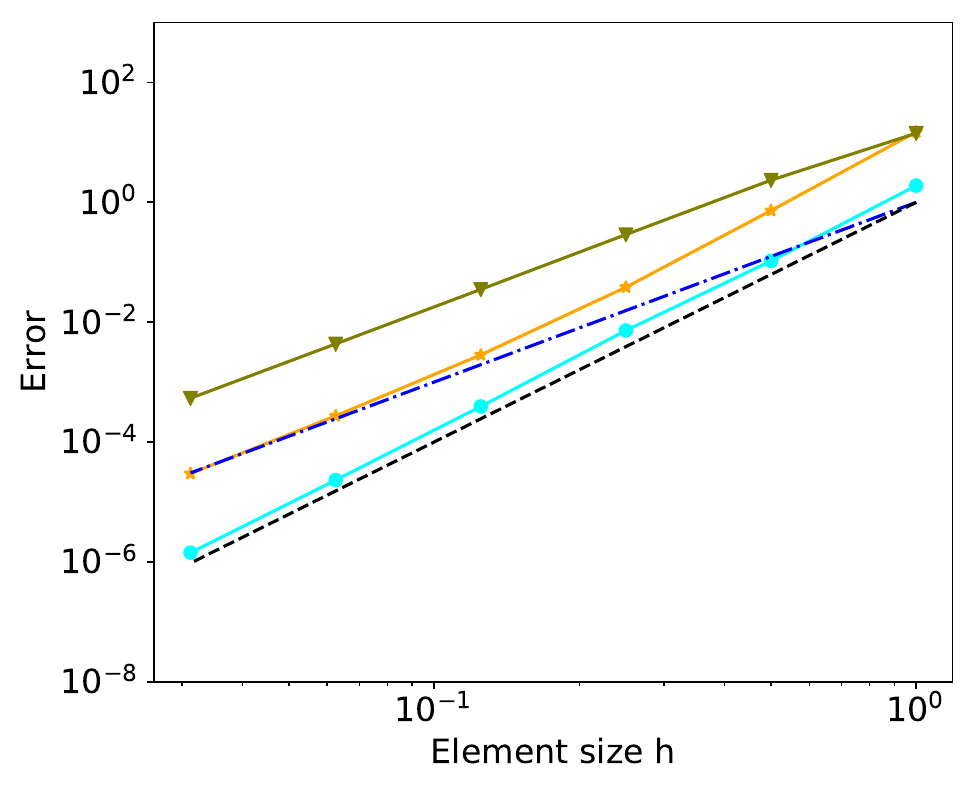}\\
	\includegraphics[width=4.5cm,height=4.0cm]{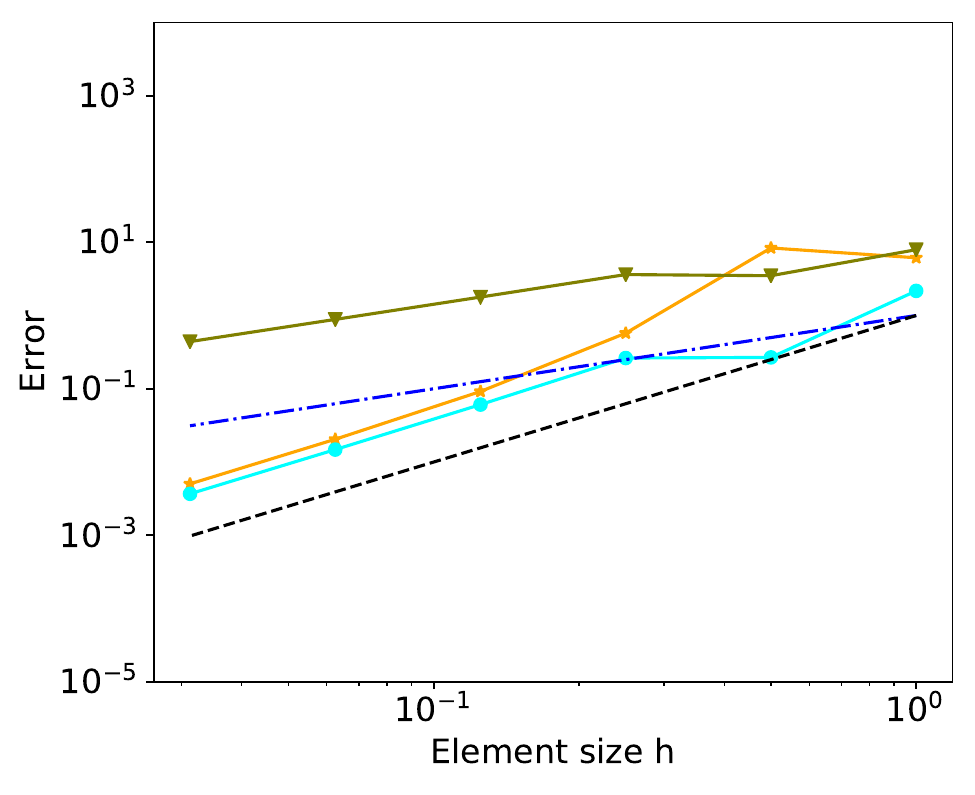}&
	\includegraphics[width=4.5cm,height=4.0cm]{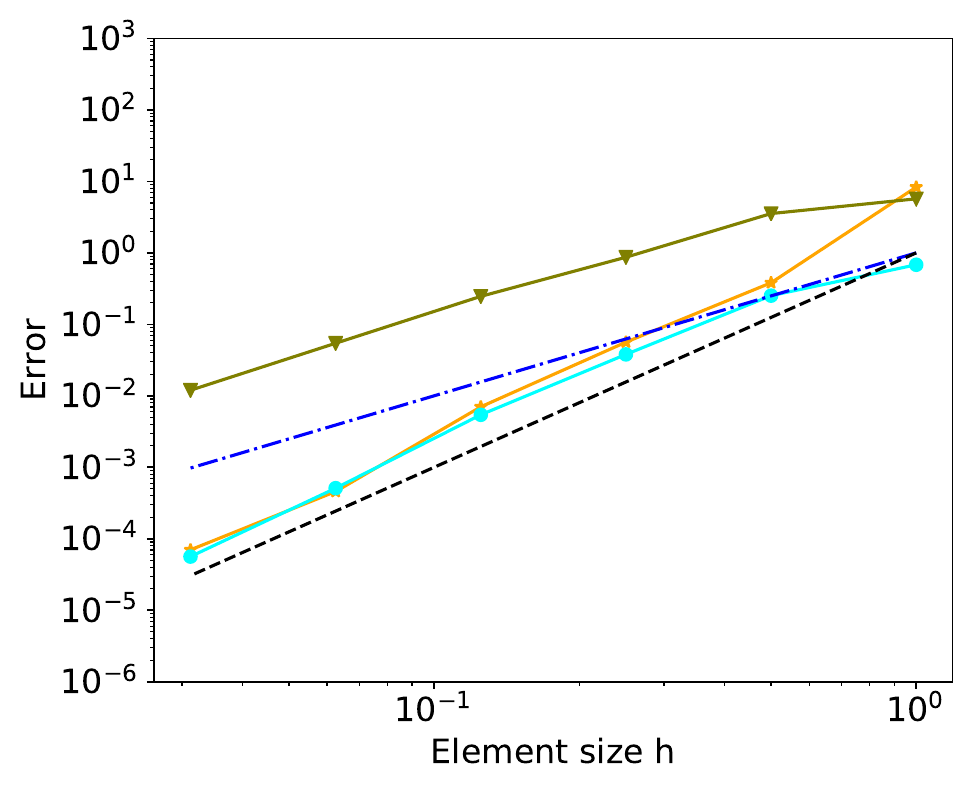}&
	\includegraphics[width=4.5cm,height=4.0cm]{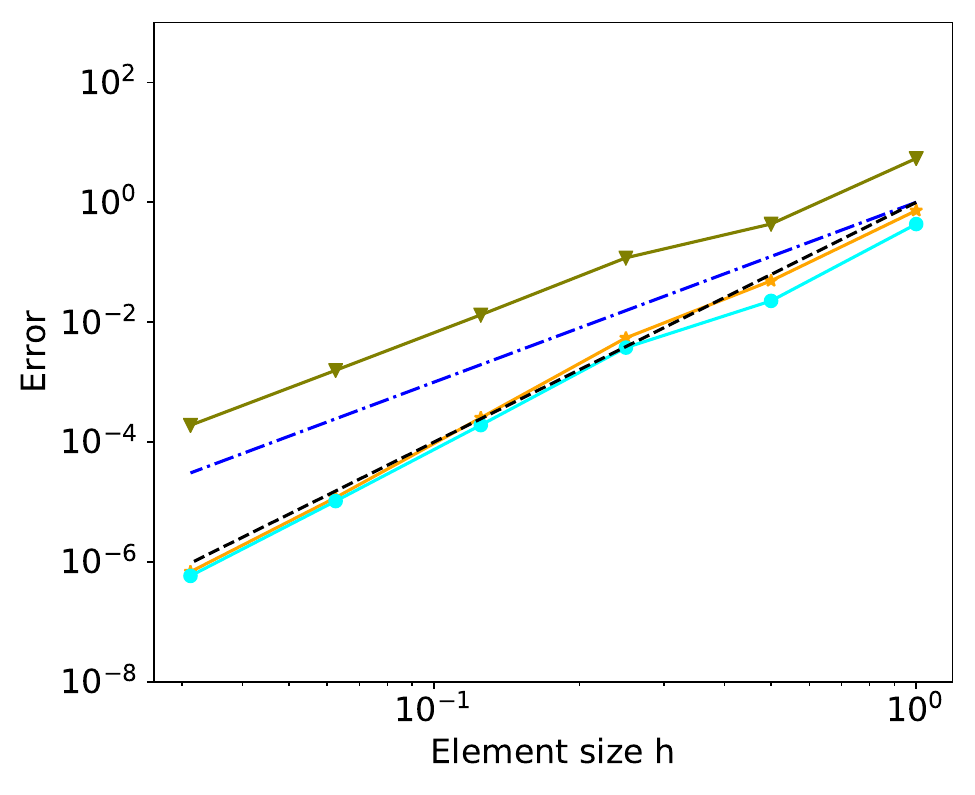}\\
	\includegraphics[width=4.5cm,height=4.0cm]{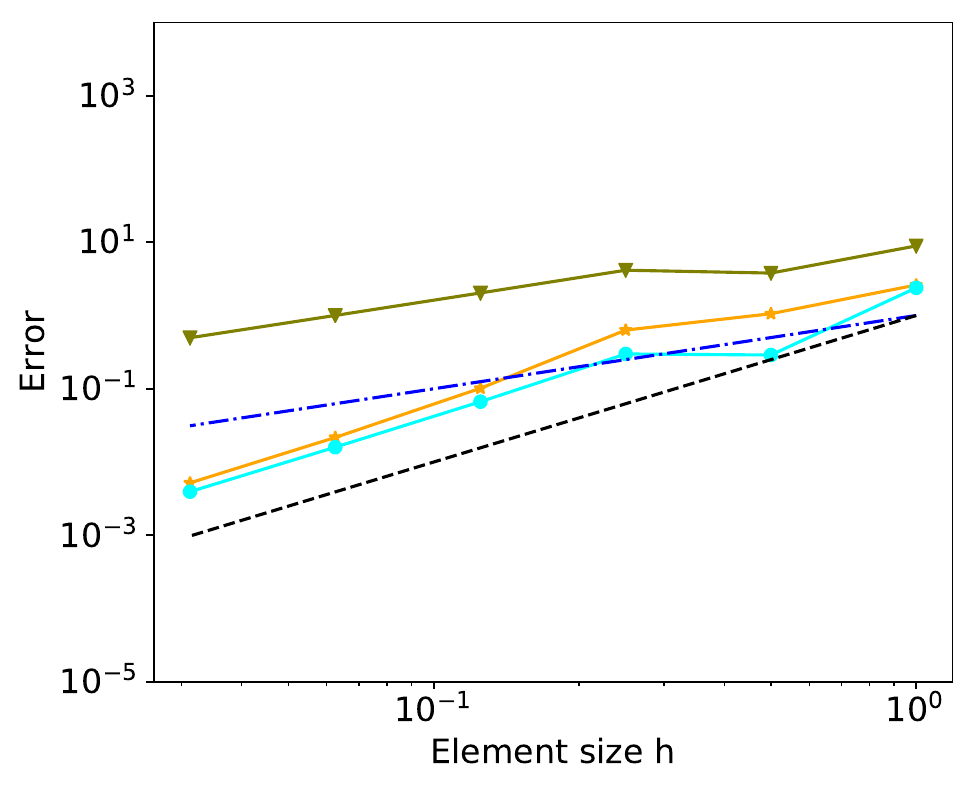}&
	\includegraphics[width=4.5cm,height=4.0cm]{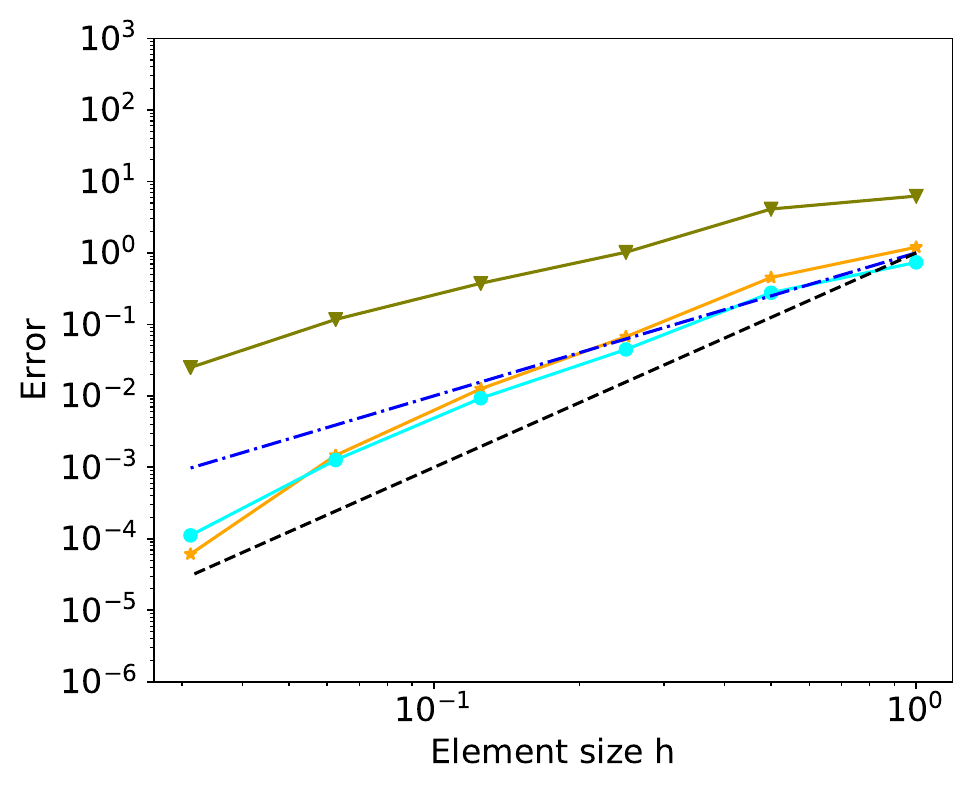}&
	\includegraphics[width=4.5cm,height=4.0cm]{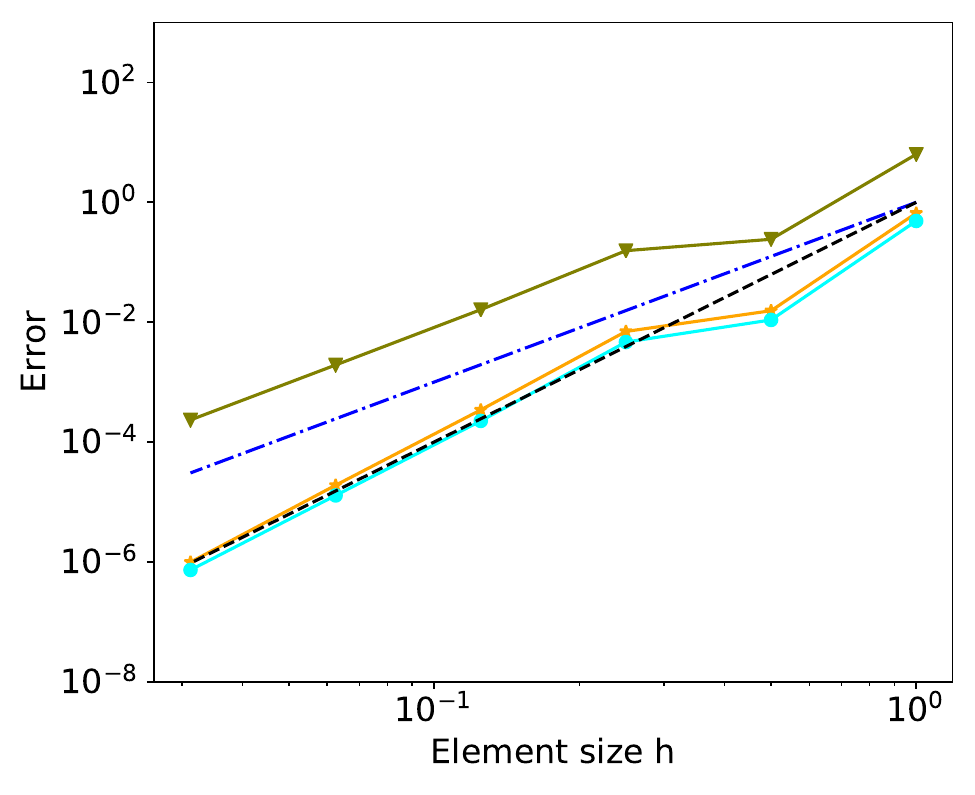}\\
	\multicolumn{3}{c}{\includegraphics[width=12.5cm,height=0.7cm]{Figures/Example1/legend.pdf}} 
\end{tabular}
\vspace{-0.3cm}
\caption{{\bf Kovasznay {flow for the perturbed Navier--Stokes problem with} $\aaah = \ww_h$.} 
	{Convergence histories for linear elements ($k=1$) (left), quadratic elements ($k=2$) (center) and cubic elements (right).} From top to bottom the values of $\mu$ are $1, 10^{-1}, 10^{-2},$ and $10^{-3}$.} 	\label{curve:ex1-case2}
	\end{figure}

	\subsection{\bf Experiment 2: Pressure reconstructed from random velocity measurements $\uu_m$ on a  bent computational domain.}
	
	In the next experiment, we reconstruct the pressure field from a random velocity field $\uu_{m}$ built on a bent computational domain. To mimic the 4D flow MRI setup, where the velocity is piecewise constant in a pixel-based grid, we consider a quadrilateral mesh formed by 693 elements. We assign a random field $\uu$ on each element, with magnitude values between $0$ and $120 cm/s$. 
	Next, we construct a criss-cross mesh of 2.772 triangles on which we interpolate the field $\uu$ in order to get the velocity field $\uu_m\in \HH_h$ 
	%\st{$\uu_m = \II_h(\uu)$, where $ \II_h$ is the interpolation operator onto $\boldsymbol{H}_h^{}$}}
with $k=1$ {(see Figure~\ref{bend1} for an illustration)}. Once the velocity field $\uu_{m}$ is known, we use it to solve the problem  \eqref{stab}, where 
we select the data $\ff$ and $g$, such that the unknowns are defined as follows:
\begin{align*}
\ww = \begin{pmatrix}
\pi\cos(\pi y)\sin(\pi x) \\ -\pi\cos(\pi x)\sin(\pi y)
\end{pmatrix},
\quad  p = \cos(\pi x)\cos(\pi y), \quad \text{and}\quad \aaa = 0.9\,\ww.
\end{align*}
The physical parameters used to compute this solution are $\rho =1.119 gr/cm^3$ and $\mu =0.0483 P$.  Regarding $\sigma $, given that {$\Vert  \nabla \uu_m\Vert_{\infty,\OO} \thickapprox 1.2 $}, we have taken $\sigma = 5.37$ in order to satisfy the condition \eqref{assum1d}.  
%\textcolor{red}{Ejemplo 1, se cumple la condici\'on \eqref{assum1d}??} 
The stabilization parameters are $\lambda =0.5$ and $\delta = 0.5$.
\begin{figure}[H]
\begin{center}
\includegraphics[width=6cm,height=4cm]{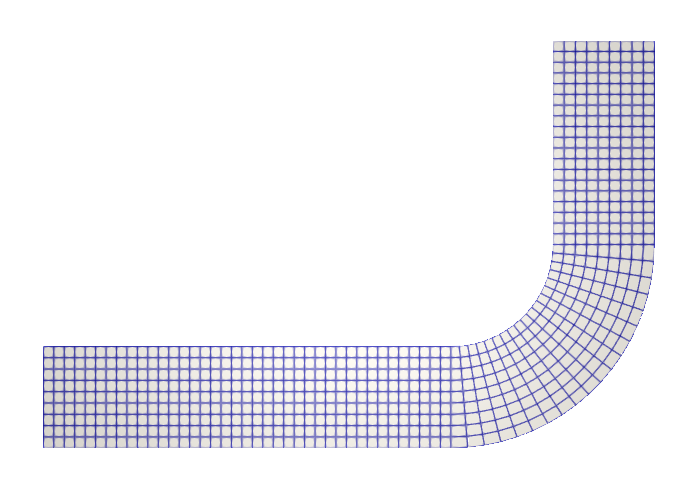}
\includegraphics[width=6cm,height=4cm]{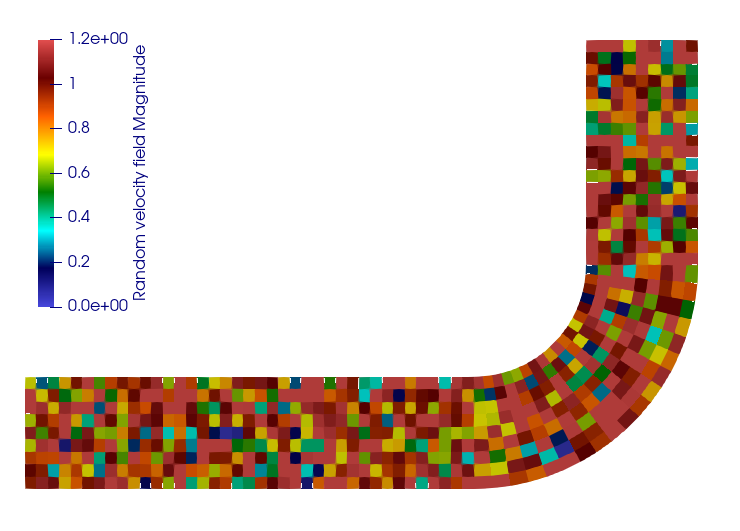}
\includegraphics[width=6cm,height=4cm]{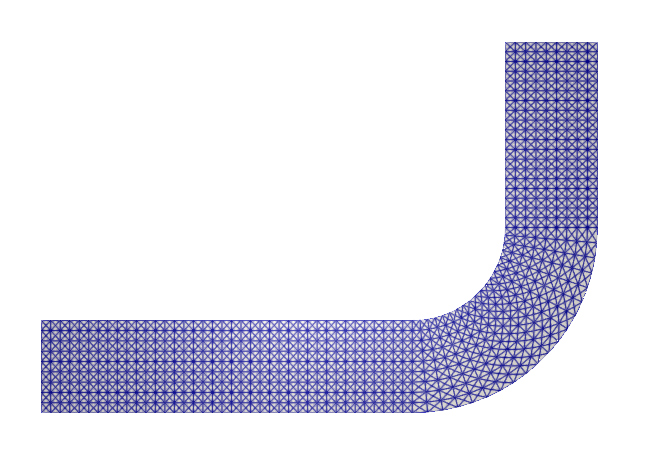}
\includegraphics[width=6cm,height=4cm]{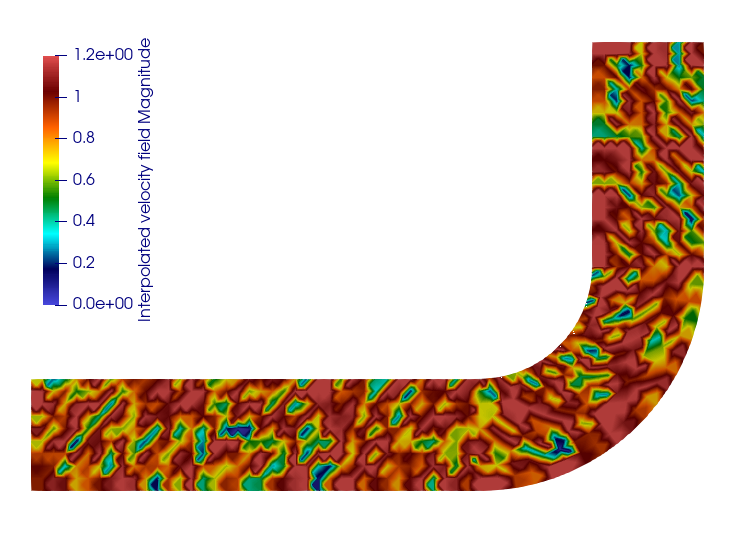}
\caption{{\bf Pressure reconstructed on a bent computational domain.}		
	Top: Quadrilateral mesh {(left)} and map of the distribution of the random velocity field $\uu$ {(right)}. Bottom: Triangular  criss-cross mesh
	{(left)}   and the interpolated velocity {$\uu_m\in \HH_h$}, with $k=1$ {(right)}. } 	\label{bend1}
	\end{center}
\end{figure}
\begin{figure}[H]
\begin{center}
\includegraphics[width=6cm,height=4cm]{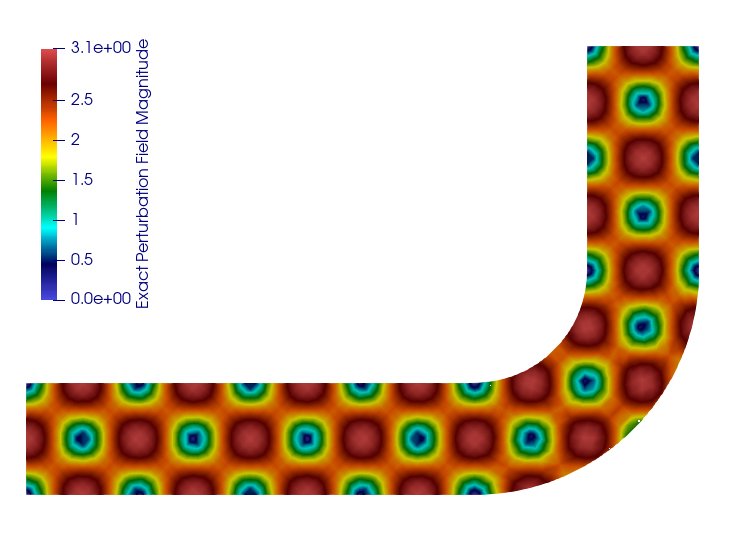}
\includegraphics[width=6cm,height=4cm]{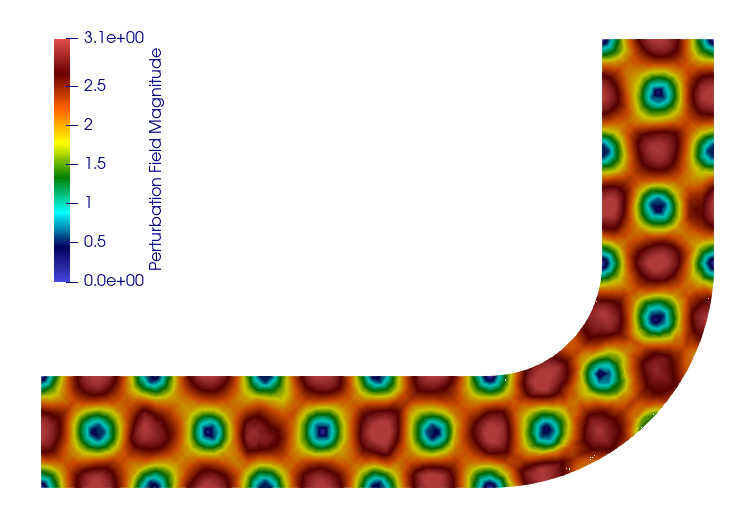}
\includegraphics[width=6cm,height=4cm]{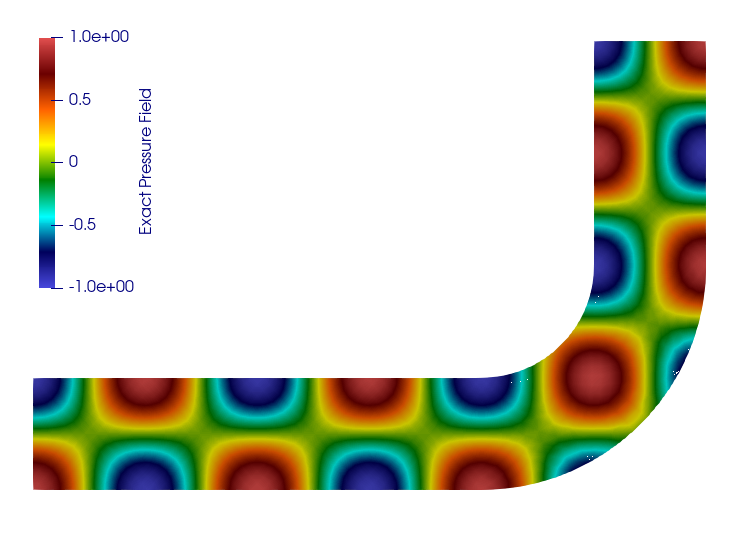}
\includegraphics[width=6cm,height=4cm]{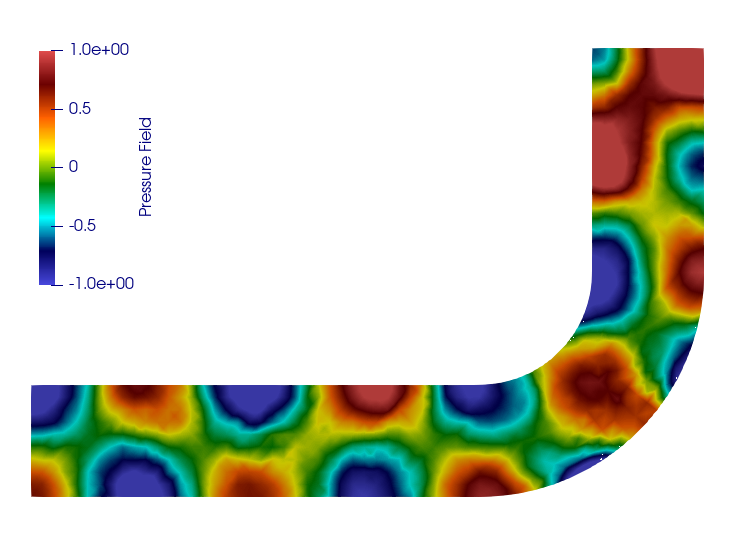}
\caption{{\bf Pressure reconstructed on a bent computational domain.}	
	Top: Exact solution $\ww$ (left) and computed solution $\ww_h$ (right). Bottom:  Exact solution $p$ (left) and computed solution $p_h$ (right). Here, we use 
	{linear elements ($k=1$) on a mesh formed by 2.772 triangles}. } 	\label{bend2}
	\end{center}
\end{figure}

The results depicted in Figure~\ref{bend2} indicate that the perturbation field $\ww_h$ is well captured, while the reconstructed pressure {$p_h$} is  close to the analytical solution,  although there is some blurring that appears, mostly due to the interpolation error between a piecewise constant function in a quadrilateral mesh ($\uu$) and its continuous interpolate $\uu_m$, which shows the sensitivity of the reconstructed pressure to perturbations on the data.

\subsection{\bf Experiment 3: Pressure reconstruction using an interpolated Navier–Stokes velocity field $\uu_m$.}

{In this final numerical experiment
we consider a test case motivated by the main idea of this paper, which is to reconstruct the pressure using a velocity measurement 
$\uu_m$  obtained from an interpolated solution of the incompressible Navier–Stokes equation:  
\begin{equation}
\label{NSG}
\sigma \,\uu - \mu \,\Delta \uu + (\nabla \uu)\uu + \nabla p = \zero, \qquad \nabla \cdot \uu =0.
\end{equation}
Here, the computational domain $\OO\defi(0,4)\times(0,1)$, and the different boundary conditions are shown in Figure~\ref{domain-example3}, with $u_p=  1-\left(\dfrac{y-0.5}{0.5}\right)^2.$
\begin{figure}[H]
\begin{center}
	\includegraphics[scale=2.8]{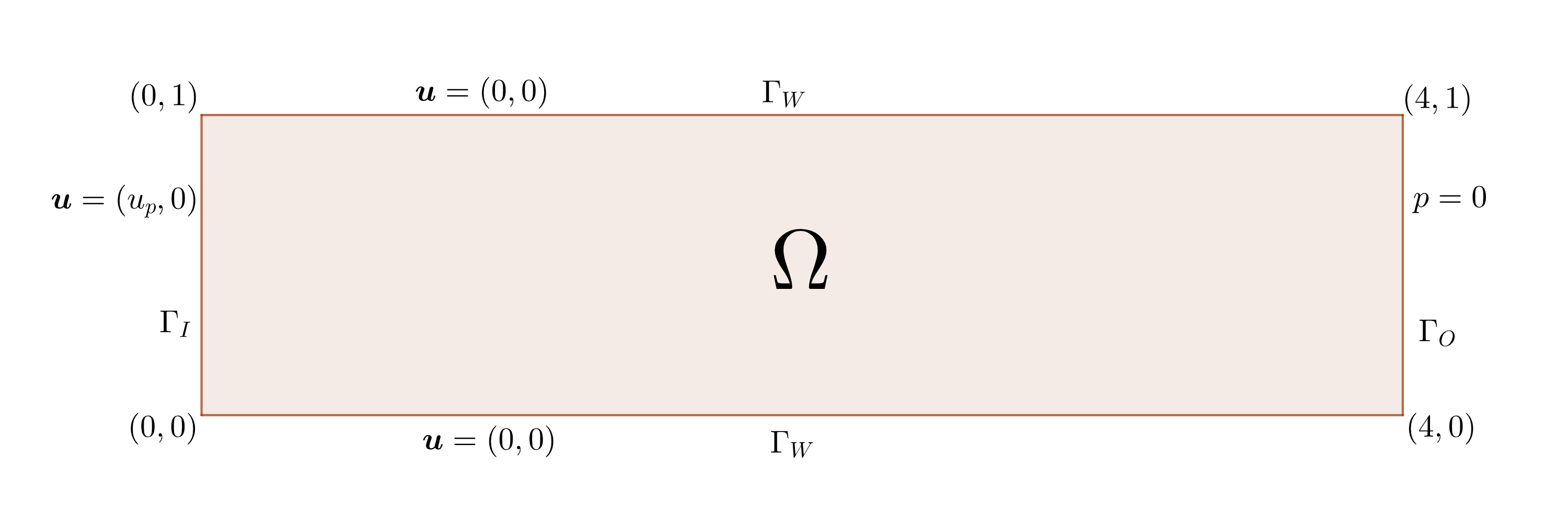}
	\caption{Domain computational and boundary conditions.} 	\label{domain-example3}
\end{center}
\end{figure}
In addition, the physical parameters considered as part of the problem are $\mu = 0.035$} and $\rho = 1$.   In order to reach our goal,  the main steps of the proposed approach are detailed below.

\begin{itemize}

\item[Step 1:]  {\textit{Solve the incompressible Navier-Stokes equations on a quadrilateral partition.}
We first solve the  Navier-Stokes equations \eqref{NSG}  in the domain $\OO$, using a coarse quadrilateral mesh $\mathcal{T}_{H,q}$ of $400$ cells with $\mathbb{Q}_1^2\times\mathbb{Q}_1$ elements by implementing the stabilized problem \cite{John2020}: Find $(\uu_{m,q},p_q)\in \mathbb{Q}_1^2\times\mathbb{Q}_1$ such that
\begin{align*}
	&\sigma (\uu_{m,q},\vv) +\mu( \nabla \uu_{m,q},\nabla \vv) + ((\nabla\uu_{m,q})\uu_{m,q},\vv) - (p_q,\nabla \cdot \vv) + \rho((\nabla\cdot \uu_{m,q}) ,r)  \\
	&\qquad +\sum_{K \in \T_{H,q}} \frac{H^2_K}{2\mu }(\sigma\uu_{m,q}+(\nabla \uu_{m,q})\uu_{m,q}+\nabla p_q-\Delta\uu_{m,q} , \nabla r)_K^{}=0 ,  
\end{align*}
where $H_K$ represents the diameter of each element $K\in \mathcal{T}_{H,q}$. %on $\T_{H,q}$
%and then $\uu_{m,q}$ to be the velocity arising from the previous calculation. 
The mesh and the computed velocity field $\uu_{m,q}$ are displayed in Figure \ref{coarse_results}. }

\item[Step 2:] \textit{Pressure reconstruction.} Now, we consider a triangulation $\trih$ composed by $1.600$ elements 
built by refining
%corresponds to a refinement of 
$\T_{H,q}$, that is, each quadrilateral composing the mesh $\T_{H,q}$ is barycentrically refined into four triangles 
thus building the
%conforming the 
resulting criss-cross mesh $\trih$. 
To reconstruct the pressure $p_h$ and compute the perturbation field $\ww_h$, we interpolate $\uu_{m,q}$ onto the subspace  $\boldsymbol{H}_h^{}$ with linear elements ($k=1$).
We denote this interpolated field by $\uu_m$ (see Figure~\ref{plots_interpoled} for an illustration).  We then solve \eqref{stab:NS} using stabilization parameters
$\lambda = 0.5$, $\displaystyle \delta = 0.001$, with $\aaa_h=\ww_h$, and  the datum $\ff$ given by
\begin{equation*}
\boldsymbol{f} \defi -\sigma \uu_{m}-\rho\, (\nabla  \uu_{m}) \uu_m.
\end{equation*}
The nonlinear system is solved using  a Picard iteration with tolerance $\mathrm{tol} = 10^{-10}$, and in order to satisfy the condition \eqref{assum1d}, we  consider $\sigma = 3.92$, because $\Vert \uu_m \Vert_{\infty,\OO} \approx  0.98$.  

\item[Step 3:] \textit{Validation of the computed approximation.} We compare the recovered pressure with a reference solution $p_{\mathrm{ref}}$ obtained
{by solving the Navier-Stokes equations \eqref{NSG} on a very fine mesh} $\trih^{\mathrm{ref}}$ using 80.000 triangular elements and  lowest-order Taylor-Hood element $\mathbb{P}_2^2\times\mathbb{P}_1$.   The  reference solution is displayed in Figure~\ref{fine_results}.
\end{itemize}

In Figure~\ref{plots_u} we  compare different profiles of
the datum {$|\uu_{m,q}|$} and its interpolate $|\uu_m|$,  showing that they are essentially identical (notice the scale on the utmost right-hand side plot).
Figure~\ref{results}  confirms
that {the magnitude of}  the perturbed velocity field $\ww_h$ 
{remains small in regions where the interpolated velocity $|\uu_m|$  closely matches $|\uu_{m,q}|$}, and that the reconstructed pressure $p_h$ is close to the reference one. {{These behaviors are also observed in the last two figures.} In Figure \ref{plots_p} } we depict three cross-sections of both the reference and reconstructed pressure  and we 
observe that they are indeed close. In fact, the largest error is at $x=0$, where it is of the order of  $1.6\%$.   Finally, the curves depicted in Figure \ref{plots_uref} confirm that the method 
reproduces $|\uu_{\mathrm{ref}}|$ very accurately.
%compute the field \textcolor{pink}{$|\uu_m+\ww_h|$} identically as we expect with \textcolor{pink}{respect} to the \textcolor{pink}{magnitude of the reference velocity $|\uu_{\mathrm{ref}}|$}. 
% (\textcolor{red}{¿Estos son cortes de la norma de la velocidad o es una componente? En ambos casos hay que aclararlos}.)

%\textcolor{red}{\sc Continuar por ac\'a en la pr\'oxima revisi\'on: Lunes 5 de Enero 2026 (12:30 hrs).}\\
\begin{figure}[H]
\begin{center}
\includegraphics[width=13.3cm,height=3.0cm]{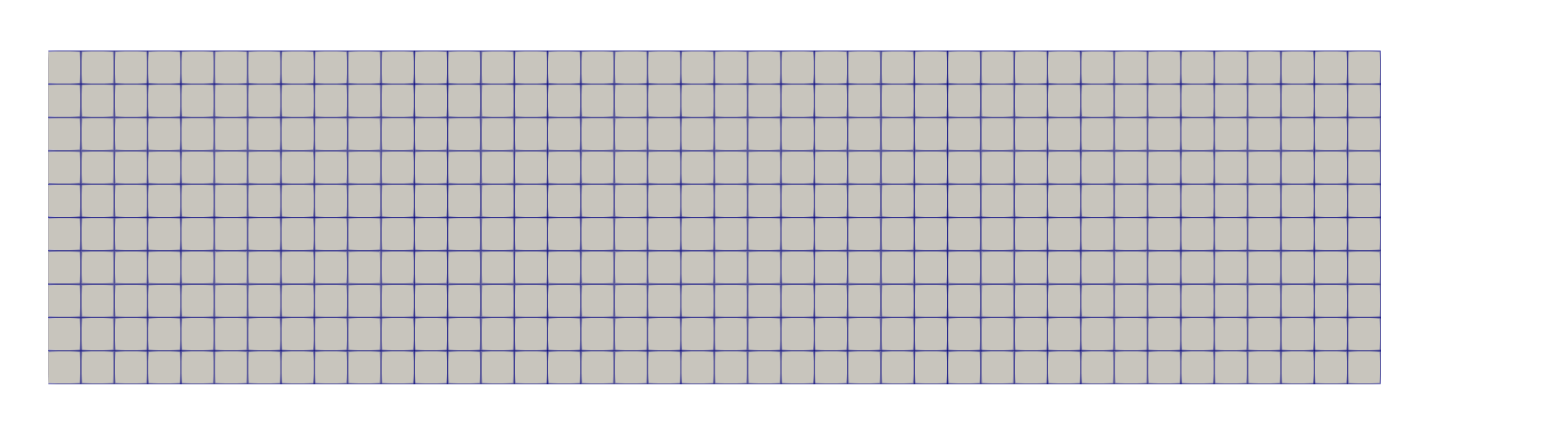}
\includegraphics[width=14cm,height=3.0cm]{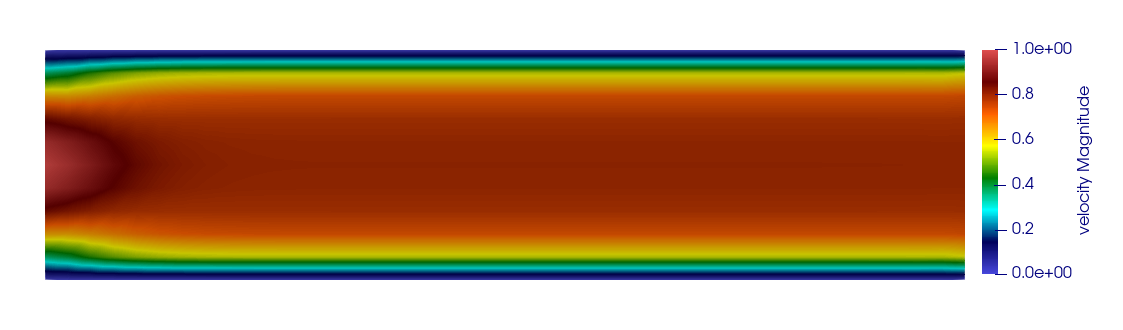}
\caption{{Top: mesh $\T_{H,q}$ composed of 400 elements. Bottom: isovalues of the velocity magnitude $|\uu_{m,q}|$.}} 	\label{coarse_results}
\end{center}
\end{figure}

\begin{figure}[H]
\begin{center}
\includegraphics[width=14cm,height=3.0cm]{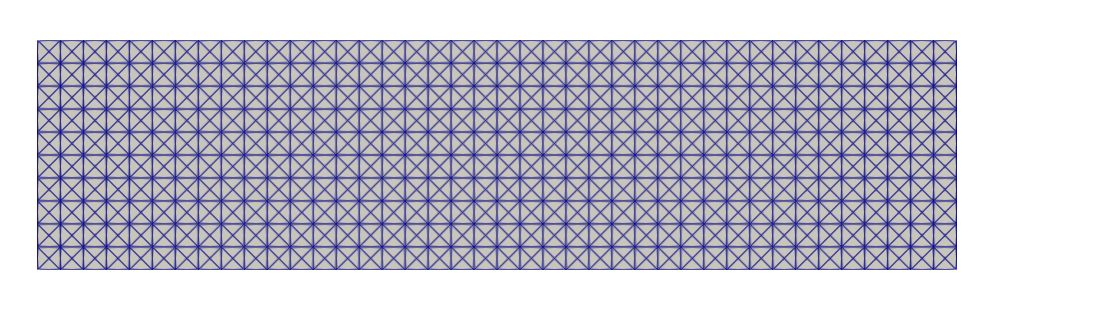}
\includegraphics[width=14cm,height=3.0cm]{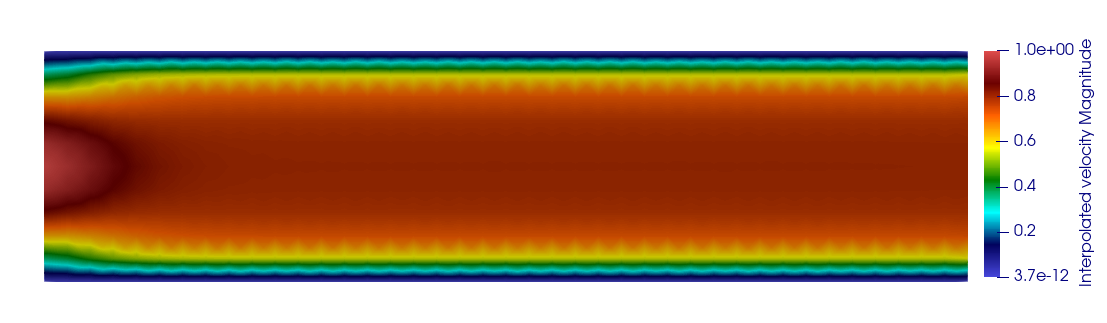}
%\includegraphics[width=14cm,height=3.0cm]{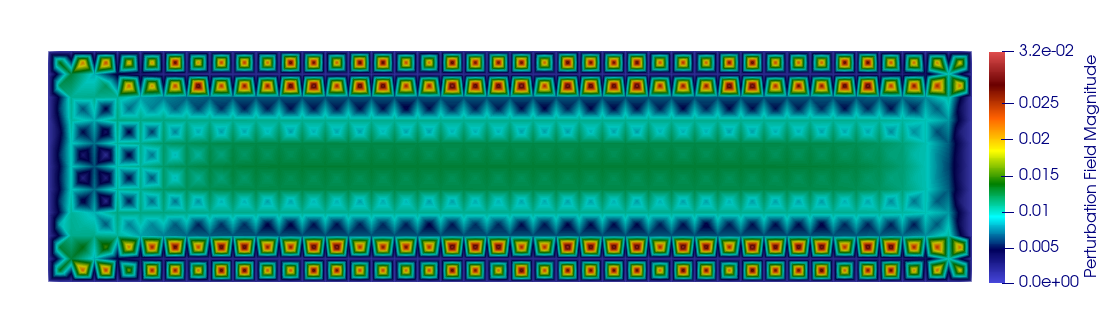}
%\includegraphics[width=14.5cm,height=3.0cm]{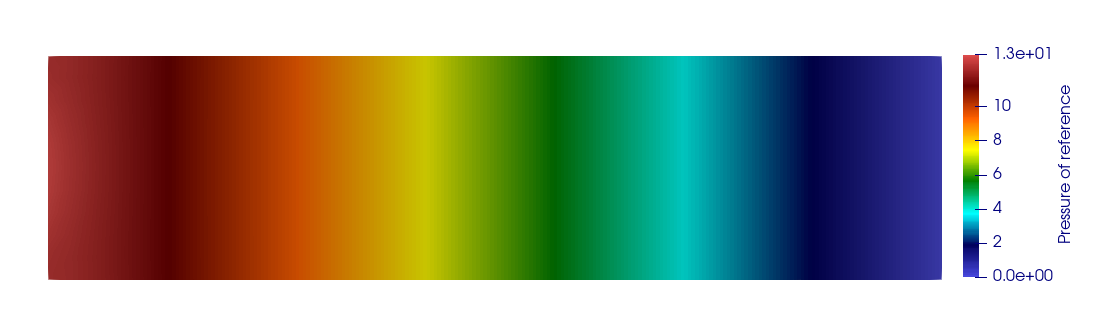}
%\caption{ From top to bottom: Mesh $\trih$ composed by 1600 elements, $\PP_1$-interpolated velocity field $\uu_{m}$, perturbation field $\ww_h$ and pressure field $p_h$.} 	\label{plots_interpoled}
\caption{ From top to bottom: Mesh $\trih$ composed by 1.600 elements, {magnitude of the interpolated velocity field $|\uu_{m}|$}.}%, perturbation field $\ww_h$ and pressure field $p_h$.}
\label{plots_interpoled}
\end{center}
\end{figure}

\begin{figure}[H]
\begin{center}
\includegraphics[width=13.75cm,height=3.0cm]{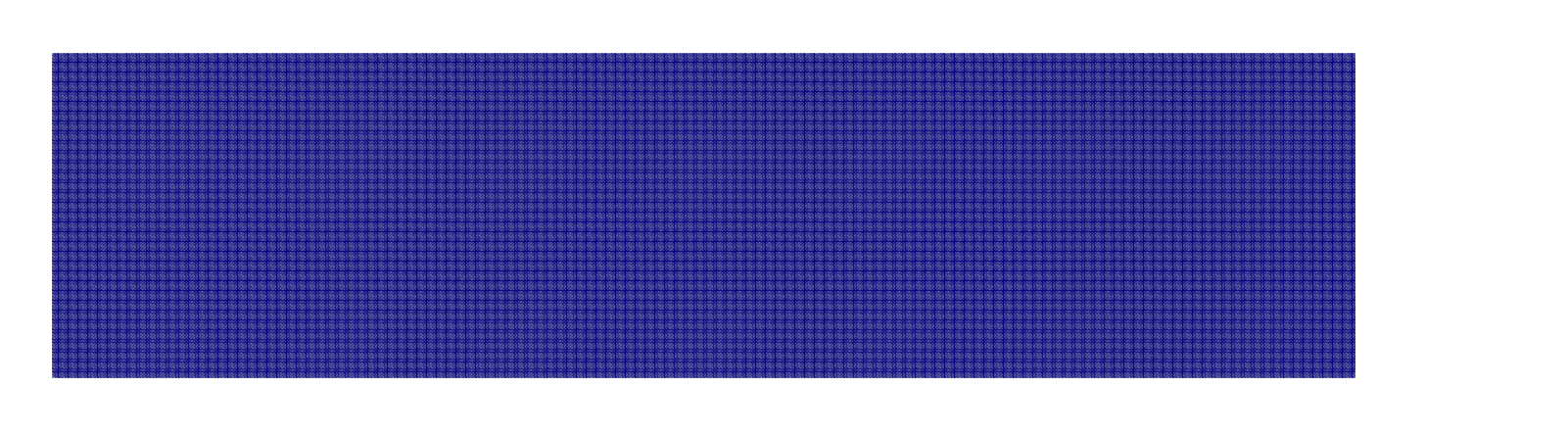}
\includegraphics[width=14cm,height=3.0cm]{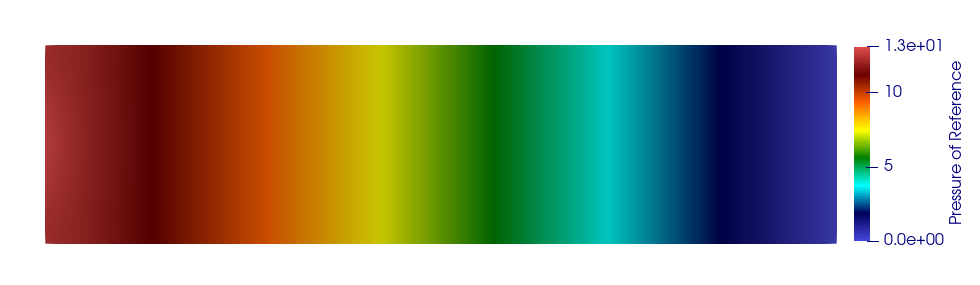}
\caption{{Triangulation  $\trih^{\mathrm{ref}}$ with 80.000 elements (top) and reference pressure field $p_{\mathrm{ref}}$ (bottom).}} 	\label{fine_results}
\end{center}
\end{figure}

\begin{figure}[!ht]
\centering
\begin{tabular}{ccc}
$x=0$ & $y = 0.5$ & $y=1$\\
\includegraphics[width=5cm,height=3.0cm]{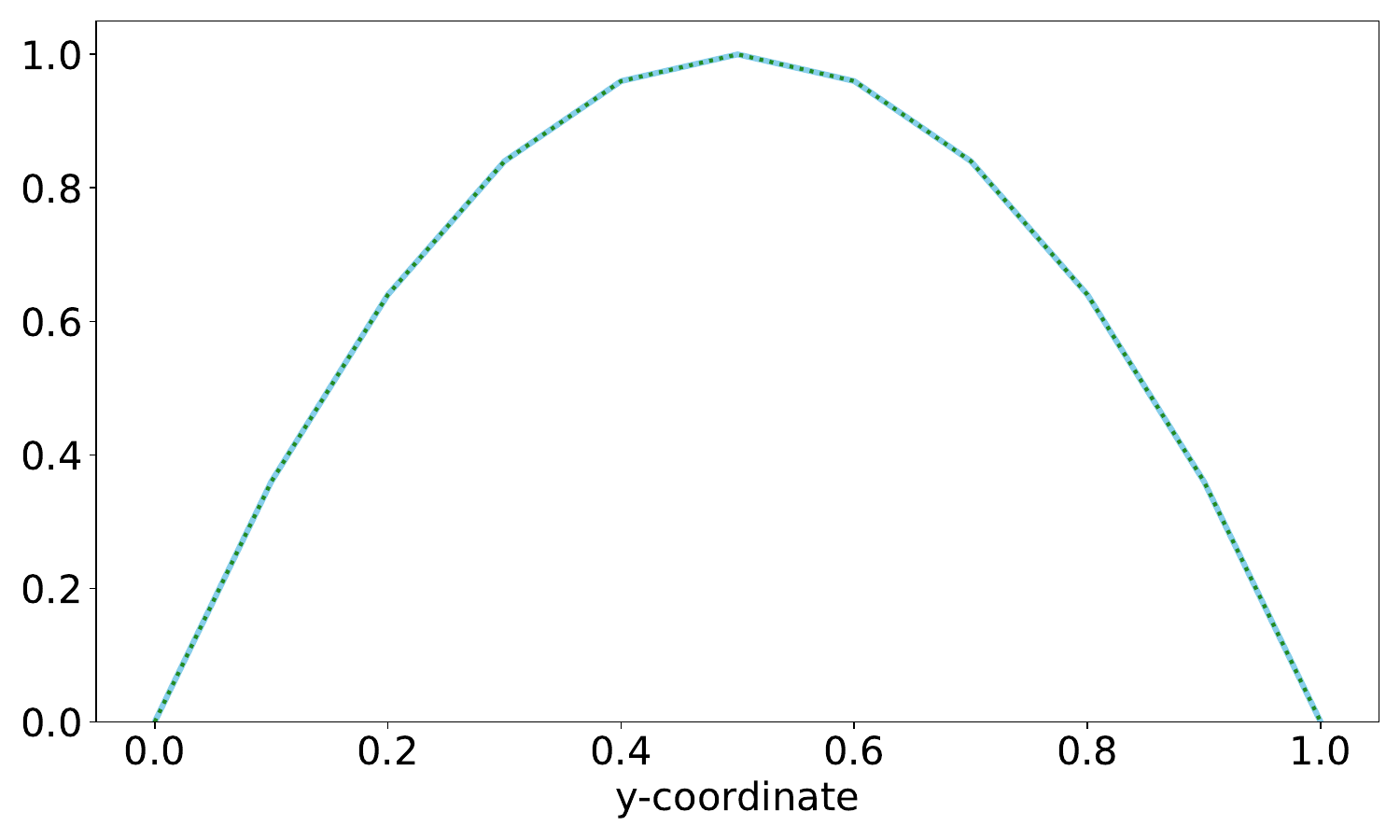}&
\includegraphics[width=5cm,height=3.0cm]{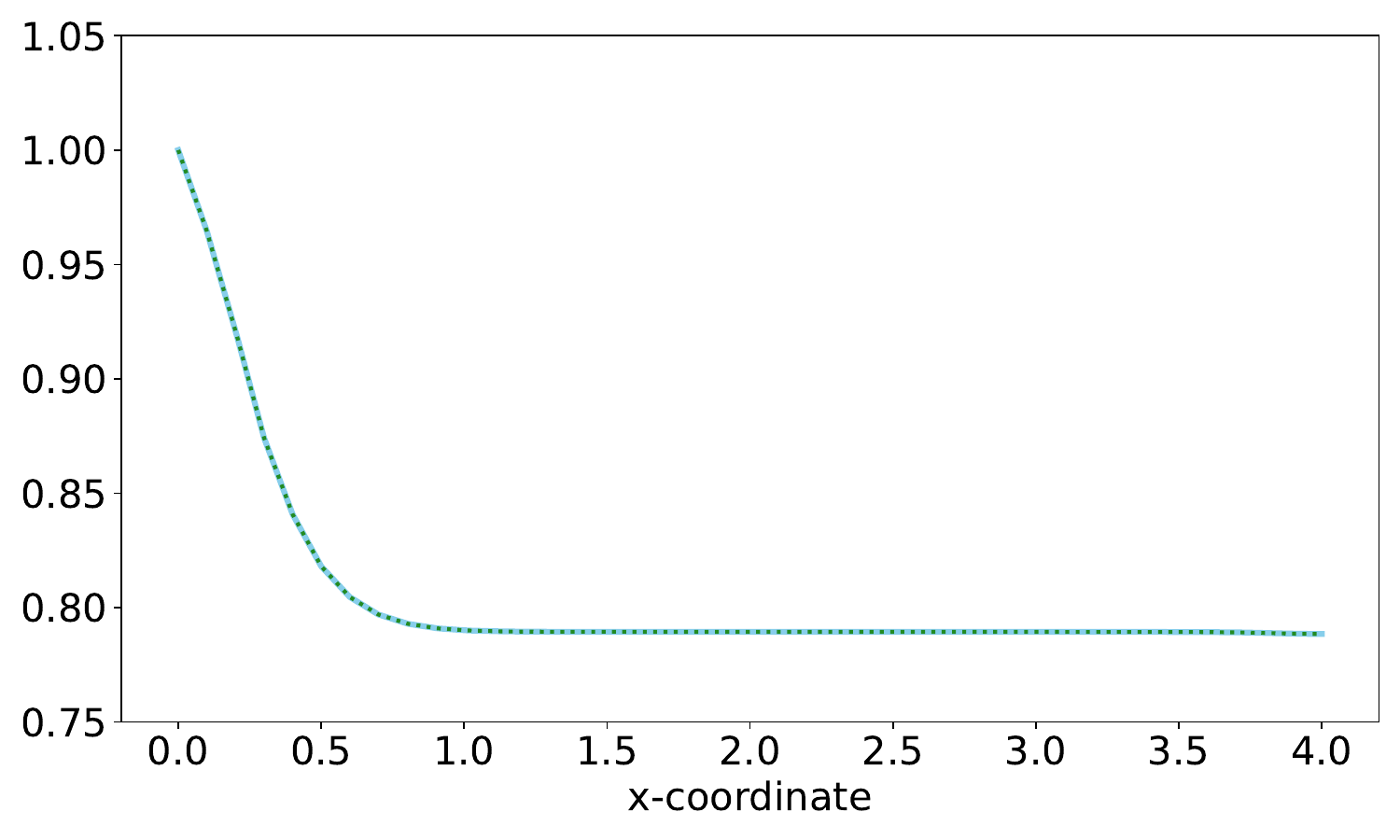}&
\includegraphics[width=5cm,height=3.0cm]{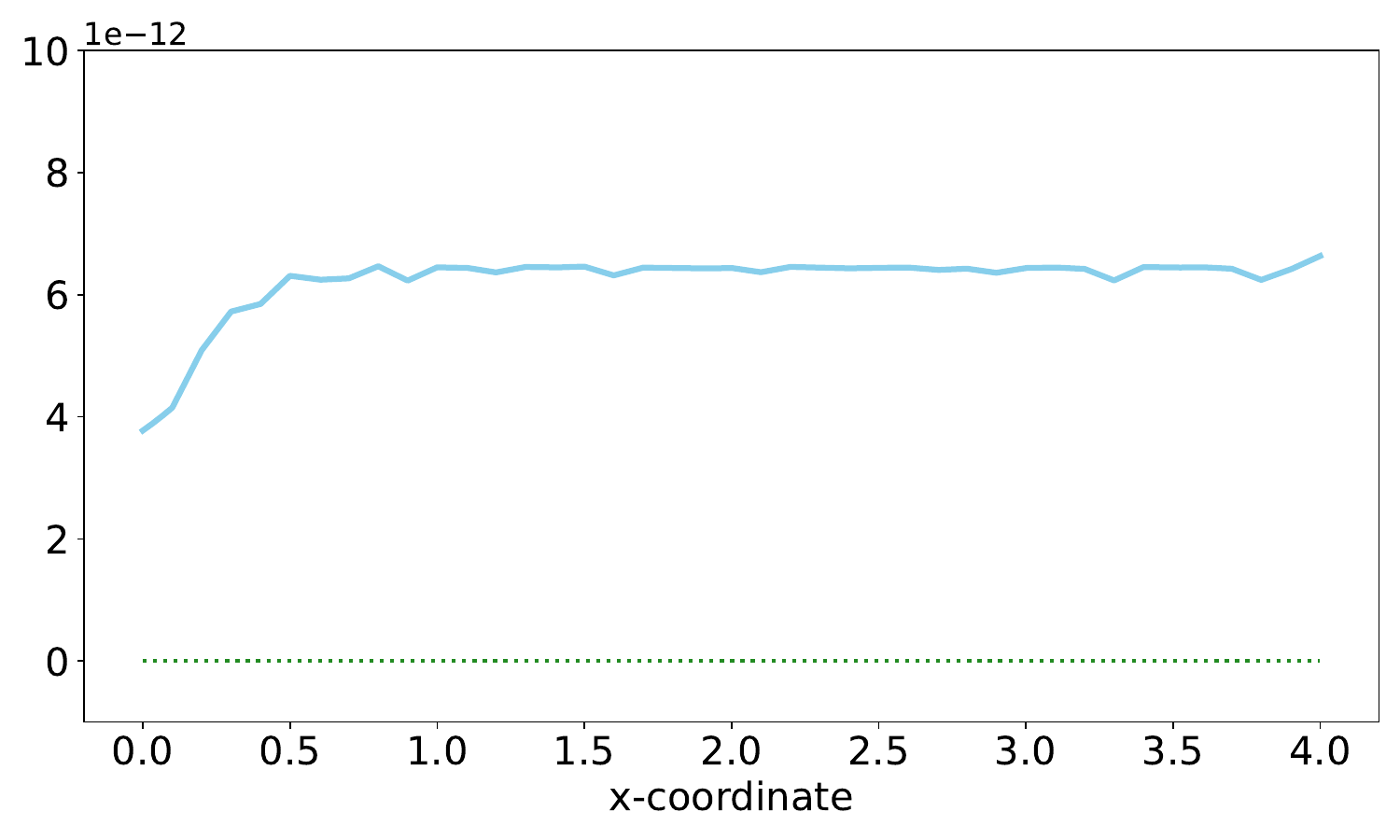}\\
\multicolumn{3}{c}{\includegraphics[width=5.5cm,height=0.5cm]{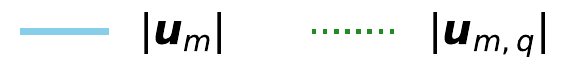}} 
\end{tabular}
\caption{Computed velocity field {$|\uu_{m,q}|$} versus interpolated velocity field {$|\uu_{m}|$} on $x=0$ (left), $y=0.5$ (middle), and $y=1$ (right).}
\label{plots_u}
\end{figure}

\begin{figure}[H]


\begin{center}
\includegraphics[width=14cm,height=3.0cm]{Figures/Example3/results/perturbation_field}
\includegraphics[width=14.5cm,height=3.0cm]{Figures/Example3/results/pressure_field}
%\caption{ From top to bottom: Mesh $\trih$ composed by 1600 elements, $\PP_1$-interpolated velocity field $\uu_{m}$, perturbation field $\ww_h$ and pressure field $p_h$.} 	\label{results}
\caption{Magnitude of the perturbation field $\ww_h$ (top) and pressure field $p_h$ (bottom).} 	\label{results}
\end{center}
\end{figure}

\begin{figure}[!ht]
\centering
\begin{tabular}{ccc}
$x=0$ & $y = 0.5$ & $y=1$\\
\includegraphics[width=5cm,height=3.0cm]{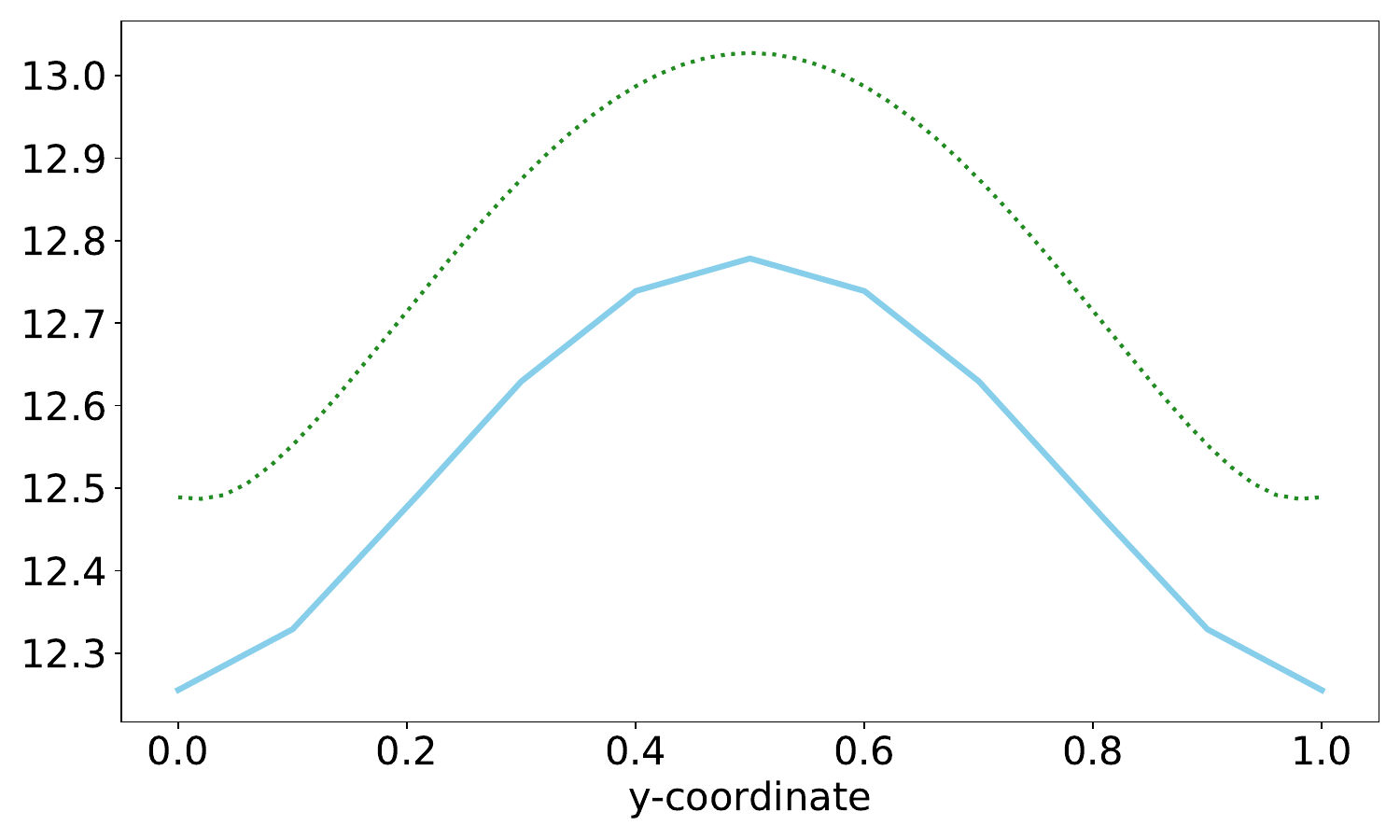}&
\includegraphics[width=5cm,height=3.0cm]{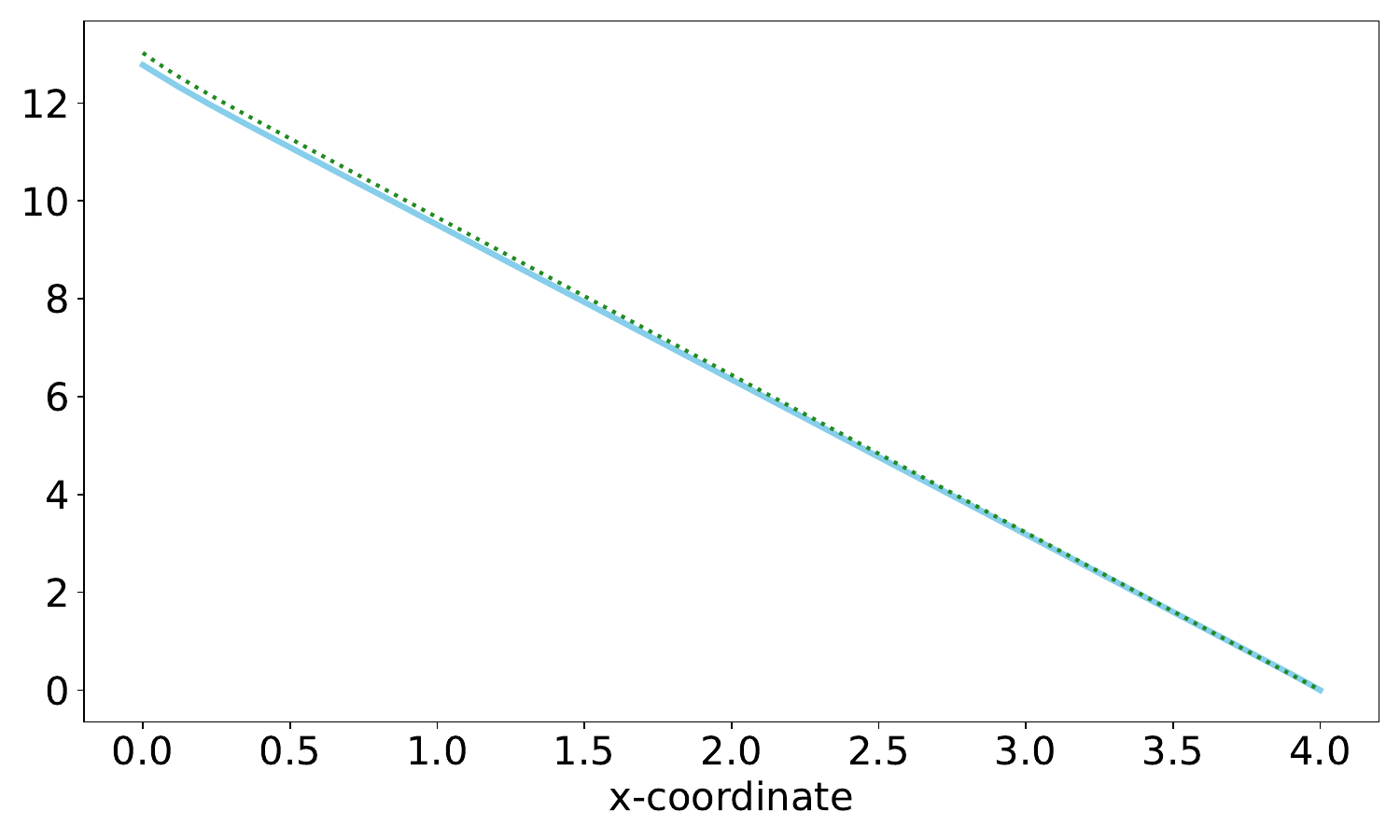}&
\includegraphics[width=5cm,height=3.0cm]{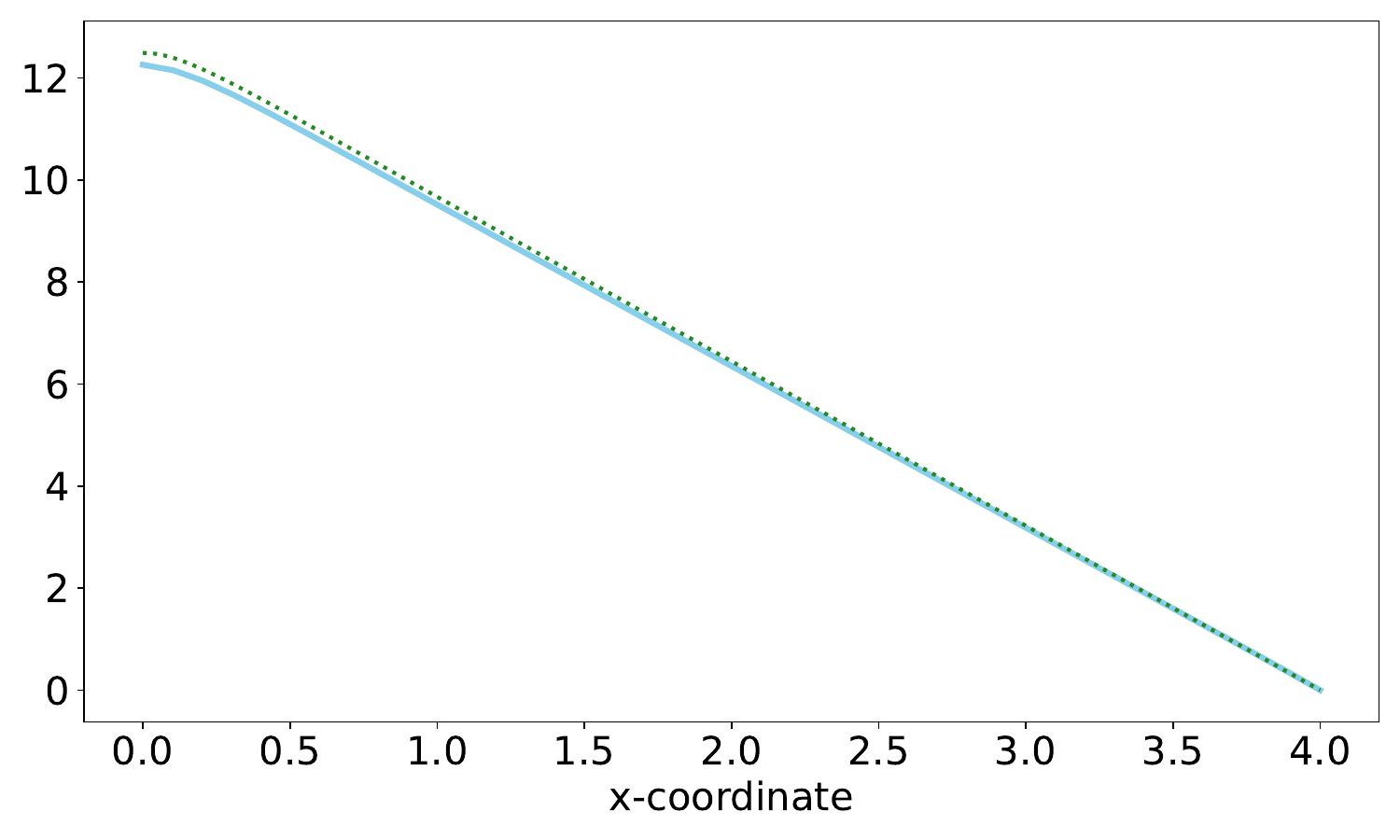}\\
\multicolumn{3}{c}{\includegraphics[width=4.5cm,height=0.5cm]{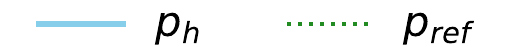}} 
\end{tabular}

\caption{Reference pressure field $p_{ref}$ versus computed pressure field $p_h$ on $x=0$ (left), $y=0.5$ (middle), and $y=1$ (right).} 	\label{plots_p}
\end{figure}

\begin{figure}[!ht]
\centering
\begin{tabular}{ccc}
$x=0$ & $y = 0.5$ & $y=1$\\
\includegraphics[width=5cm,height=3.0cm]{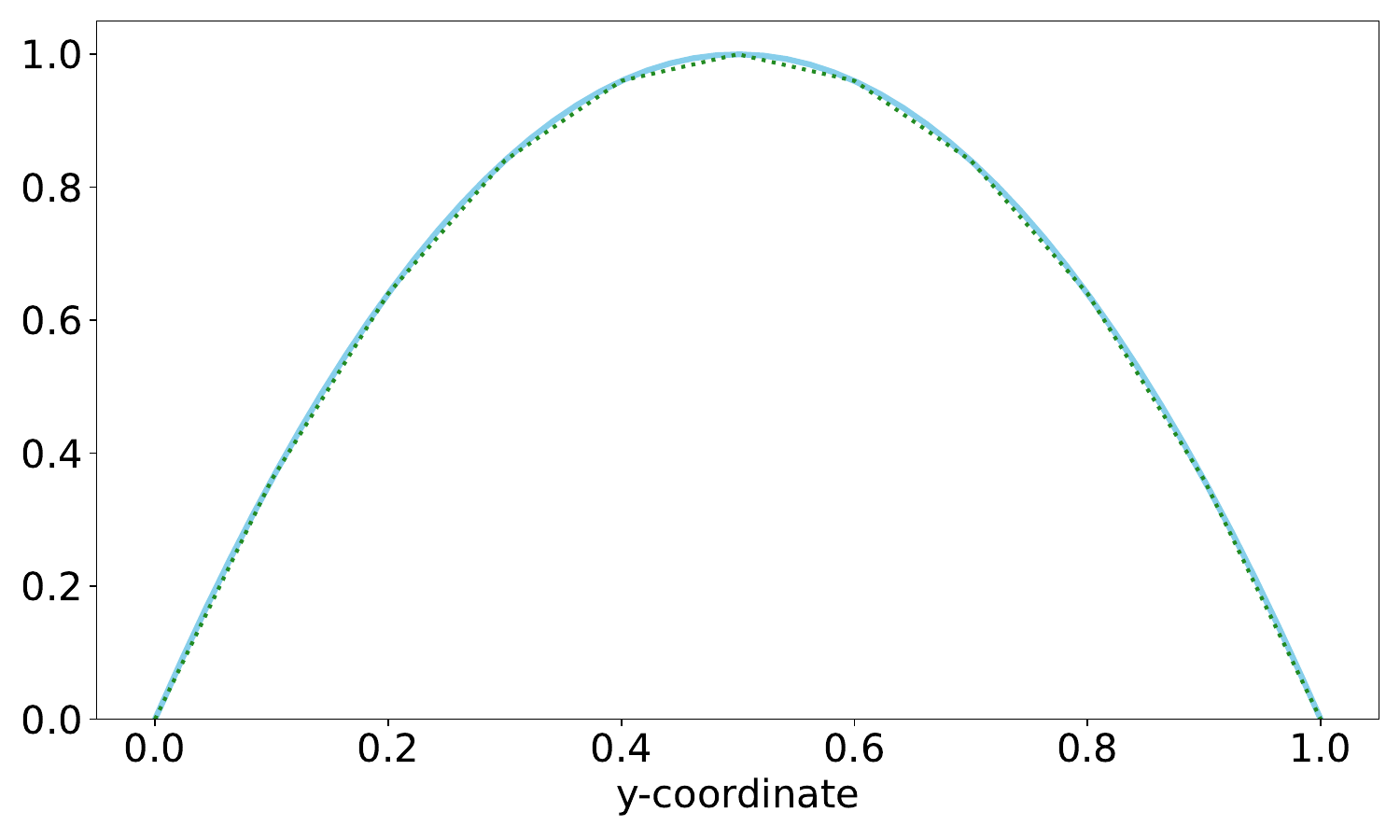}&
\includegraphics[width=5cm,height=3.0cm]{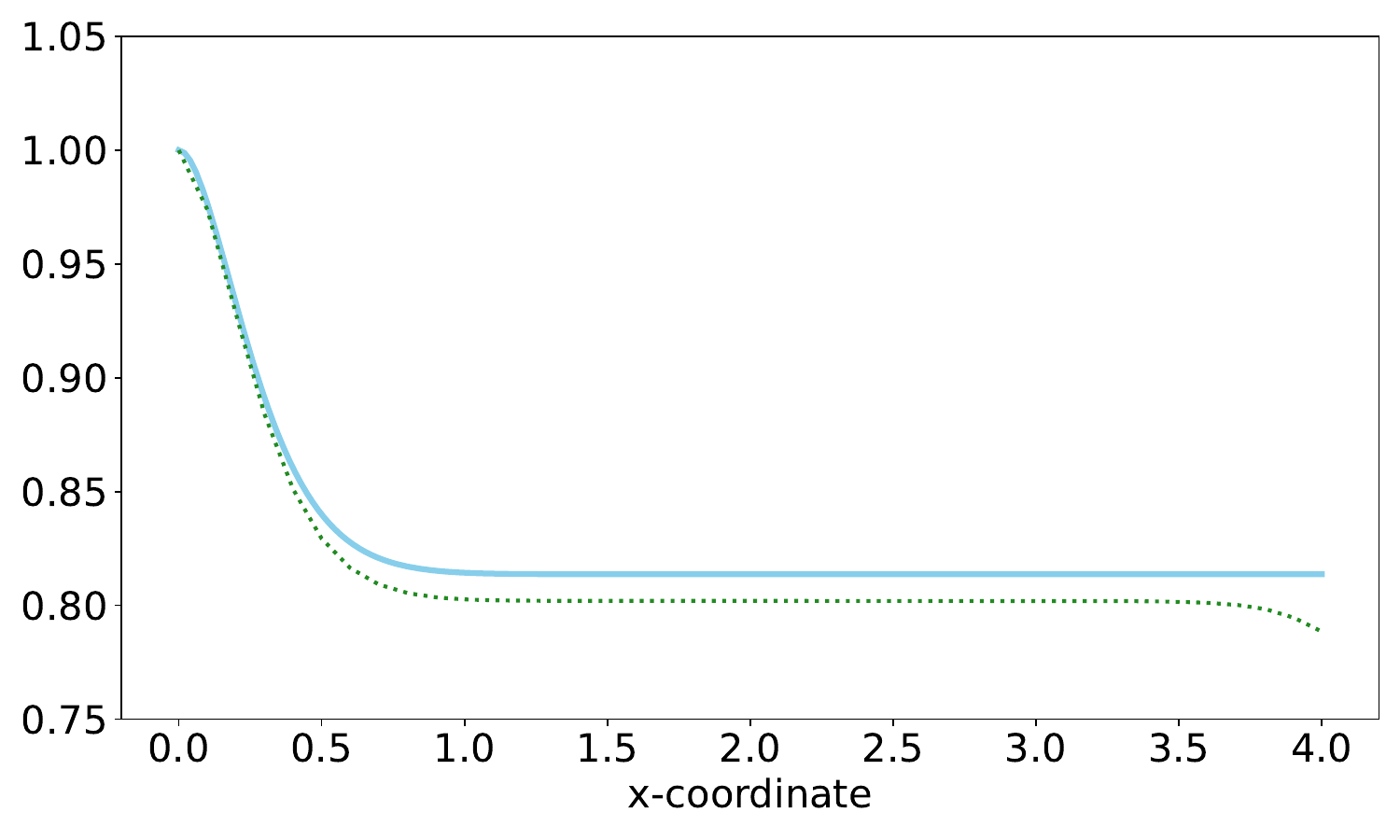}&
\includegraphics[width=5cm,height=3.0cm]{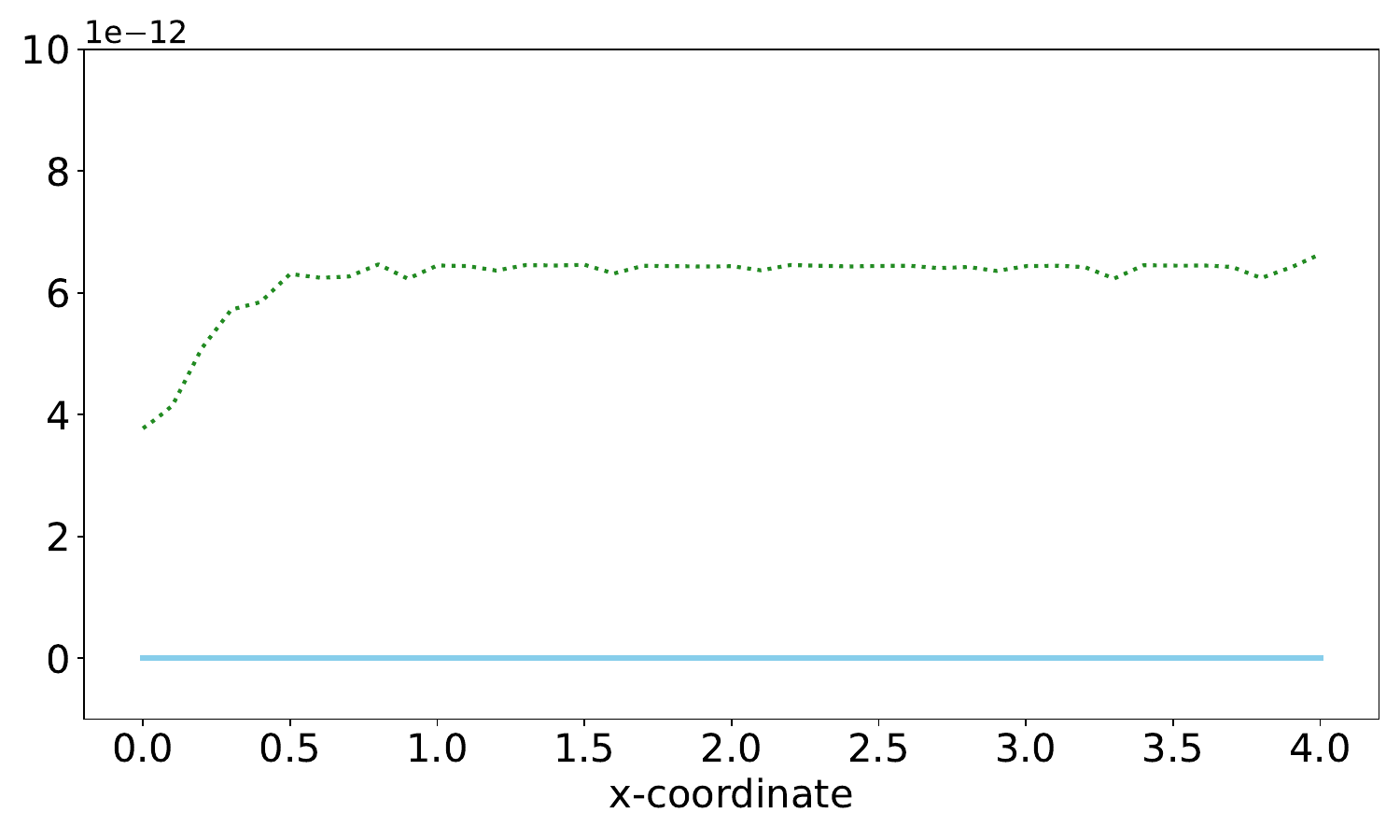}\\
\multicolumn{3}{c}{\includegraphics[width=5.5cm,height=0.5cm]{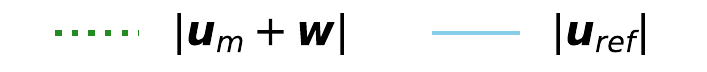}} 
\end{tabular}

\caption{{Magnitude of the} reference velocity field $|\uu_{ref}|$ versus {$|\uu_{m}+\ww_h|$} on $x=0$ (left), $y=0.5$ (middle), and $y=1$ (right).	} 	\label{plots_uref}
\end{figure}

% %%%%%%%%%%%%%%%%%
% Eigenproblem
%\input{./../SourceFiles/eigenproblem.tex}

%%%%%%%%%%%%%%%%%%%%%%
% Conclusions
%%%%%%%%%%%%%%%%%%%%%%%
\section{Conclusions}\label{conclusions}
In this paper we have presented and analyzed a stabilized finite element method for the modified (linearized) Navier-Stokes equation
arising from 4D MRI data.  The method was proven stable and optimally convergent, and the numerical experiments show that this
scheme can be used successfully to recover the pressure and correct the velocity data errors. Despite this, there are several open questions
that deserve further investigation.  For example, the extension to time-dependent problems is of practical interest.  For such problems,
the choice of stabilization will be of paramount importance. This, as the search for pressure-robust and/or divergence-free discretizations of (P) will be the topic
of future research. 

%%%%%%%%%%%%%%%%%%%%%%%%%%%
% Acknowledgements
%%%%%%%%%%%%%%%%%%%%%%%%%%%%
\section*{Acknowledgments}
The second authot was partially supported by the WIAS Young Scientist Grant. The  third author was partially 
supported by Direcci\'on de Investigaci\'on of the Universidad Cat\'olica de la Sant\'isima Concepci\'on through project DIREG 01/2025 and Proyecto Ingenier\'ia 2030 (ING222010004).

%%%%%%%%%%%%%%%%%%%%%%%%%%%
% Note
%%%%%%%%%%%%%%%%%%%%%%%%%%%%
\paragraph{Note for the reader}
Please note that this manuscript represents a preprint only and has not been (or is in the process of being) peer-reviewed. A DOI link will be made available for this ArXiv preprint as soon as the peer-reviewed version is published online.

%%%%%%%%%%%%%%%%%%%%%%%%%%%%%
% Bibliography
\bibliography{references.bib}

@article{alnaes_fenics_2015,
  title = {The {{FEniCS Project Version}} 1.5},
  author = {Aln{\ae}s, M. and Blechta, J. and Hake, J. and Johansson, A. and Kehlet, B. and Logg, A. and Richardson, C. and Ring, J. and Rognes, M.E. and Wells, G.N.},
  year = {2015},
  volume = {3},
  issn = {2197-8263},
  doi = {10.11588/ans.2015.100.20553},
  journal = {Archive of Numerical Software},
  number = {100}
}

@Book{John2020,
author="John, Volker
and Knobloch, Petr
and Wilbrandt, Ulrich",
OPTeditor="Bodn{\'a}r, Tom{\'a}{\v{s}}
and Galdi, Giovanni P.
and Ne{\v{c}}asov{\'a}, {\v{S}}{\'a}rka",
title="Finite Element Pressure Stabilizations for Incompressible Flow Problems",
bookTitle="Fluids Under Pressure",
year="2020",
publisher="Springer International Publishing",
address="Cham",
pages="483--573",
doi="10.1007/978-3-030-39639-8_6"
}

@article{Markl12,
	author  = {Markl, M. and Frydrychowicz, A. and Kozerke, S. and Hope, M. and Wieben, O.},
	title   = {{4D} flow {MRI}},
	journal = {Journal of Magnetic Resonance Imaging},
	year    = {2012},
	volume  = {36},
	number  = {5},
	pages   = {1015--1036},
	month   = {nov},
	doi     = {10.1002/jmri.23632},
	pmid    = {23090914}
}

@article{Bissell23,
	author  = {Bissell, Malenka M. and Raimondi, Francesca and Ait Ali, Lamia and Allen, Bradley D. and Barker, Alex J. and Bolger, Ann and Burris, Nicholas and Carh{\"a}ll, Carl Johan and Collins, Jeremy D. and Ebbers, Tino and Francois, Christopher J. and Frydrychowicz, Alex and Garg, Puneet and Geiger, Julia and Ha, Hojin and Hennemuth, Anja and Hope, Michael D. and Hsiao, Albert and Johnson, Kevin and Kozerke, Sebastian and Ma, Lin E. and Markl, Michael and Martins, Diana and Messina, Michael and Oechtering, Thierno H. and van Ooij, Pim and Rigsby, Cynthia and Rodriguez-Palomares, Jose and Roest, Arno A. W. and Rold{\'a}n-Alzate, Alejandro and Schnell, Susanne and Sotelo, Julio and Stuber, Matthias and Syed, Anum B. and T{\"o}ger, Johannes and van der Geest, Rob and Westenberg, Jos and Zhong, Liang and Zhong, Yumin and Wieben, Oliver and Dyverfeldt, Petter},
	title   = {4{D} {F}low cardiovascular magnetic resonance consensus statement: 2023 update},
	journal = {Journal of Cardiovascular Magnetic Resonance},
	year    = {2023},
	volume  = {25},
	number  = {1},
	pages   = {40},
	month   = {jul},
	day     = {20},
	doi     = {10.1186/s12968-023-00942-z},
	pmid    = {37474977},
	pmcid   = {PMC10357639}
}

@incollection{BER04,
	title = {Data acquisition, $K$-{S}pace sampling, and image reconstruction},
	author = {Matt A. Bernstein and Kevin F. King and Xiaohong Joe Zhou},
	booktitle = {Handbook of MRI Pulse Sequences},
	publisher = {Academic Press},
	address = {Burlington},
	pages = {iv},
	year = {2004},
	OPTisbn = {978-0-12-092861-3},
	doi = {10.1016/B978-0-12-092861-3.50030-3},
}

@article{BloUe07,
	author  = {Block, K. T. and Uecker, M. and Frahm, J.},
	title   = {Undersampled radial {MRI} with multiple coils: {I}terative image reconstruction using a total variation constraint},
	journal = {Magnetic Resonance in Medicine},
	year    = {2007},
	volume  = {57},
	number  = {6},
	pages   = {1086--1098},
	month   = {jun},
	doi     = {10.1002/mrm.21236},
	pmid    = {17534903}
}

@article{HFB86,
title = {A new finite element formulation for computational fluid dynamics: {V}. {C}ircumventing the {B}abuška-{B}rezzi condition: a stable {P}etrov-{G}alerkin formulation of the {S}tokes problem accommodating equal-order interpolations},
OPTjournal  = {Comput. Methods Appl. Mech. Eng.},
journal = {Computer Methods in Applied Mechanics and Engineering},
volume = {59},
number = {1},
pages = {85-99},
year = {1986},
issn = {0045-7825},
doi = {10.1016/0045-7825(86)90025-3},
OPTurl = {https://www.sciencedirect.com/science/article/pii/0045782586900253},
author = {Thomas J.R. Hughes and Leopoldo P. Franca and Marc Balestra},
}

@Article{FS91,
  author  = {Franca, L.P. and Stenberg, R.},
  title   = {Error analysis of {G}alerkin least squares methods for the elasticity equations},
  doi     = {10.1137/0728084},
  number  = {6},
  pages   = {1680--1697},
  volume  = {28},
journal ={SIAM Journal on Numerical Analysis},
  OPTjournal = {SIAM J. Numer. Anal.},
  year    = {1991}
}

@book {EG21-FE1,
    AUTHOR = {Ern, A. and Guermond, J.-L.},
     TITLE = {Finite elements {I}---{A}pproximation and interpolation},
    SERIES = {Texts in Applied Mathematics},
    VOLUME = {72},
 PUBLISHER = {Springer, Cham},
      YEAR = {2021},
     PAGES = {xii+325},
      OPTISBN = {978-3-030-56340-0; 978-3-030-56341-7},
   OPTMRCLASS = {65-01},
  OPTMRNUMBER = {4242224},
       DOI = {10.1007/978-3-030-56341-7},
       OPTurl = {https://doi.org/10.1007/978-3-030-56341-7},
}

@Book{BS08,
  author    = {Brenner, S. C. and Scott, L. R.},
  title     = {The {M}athematical {T}heory of {F}inite {E}lement {M}ethods},
  doi       = {10.1007/978-0-387-75934-0},
  edition   = {Third},
  publisher = {Springer, New York},
  series    = {Texts in Applied Mathematics},
  volume    = {15},
  year      = {2008}
}

@TechReport{BFV04,
  author         = {Barrenechea, G. and Fern{\'a}ndez, M. and Vidal, C.},
  title          = {{A stabilized finite element method for the {O}seen equation with dominating reaction}},
  institution    = {{INRIA}},
  year           = {2004},
  type           = {Research Report},
  number         = {RR-5213},
  pages          = {21},
  url            = {https://hal.inria.fr/inria-00070780},
}

@article{carber2023,
	author = {{Araya, R.} and {Bertoglio, C.} and {C\'arcamo, C.} and {Nolte, D.} and {Uribe, S.}},
	title = {Convergence analysis of pressure reconstruction methods from discrete velocities},
	DOI= "10.1051/m2an/2023021",
	journal = {ESAIM: M2AN},
	year = 2023,
	volume = 57,
	number = 3,
	pages = "1839-1861",
}

@article{GMSCUBM2022,
OPTauthor = {Garay, Jeremías and Mella, Hernán and Sotelo, Julio and Cárcamo, Cristian and Uribe, Sergio and Bertoglio, Cristóbal and Mura, Joaquín},
author = {Garay, J. and Mella, H. and Sotelo, J. and Cárcamo, C. and Uribe, S. and Bertoglio, C. and Mura, J.},
title = {Assessment of 4{D} flow {MRI's} quality by verifying its {N}avier–{S}tokes compatibility},
journal = {International Journal for Numerical Methods in Biomedical Engineering},
volume = {38},
number = {6},
pages = {e3603},
OPTkeywords = {4D flow MRI, blood flows, Navier–Stokes equations, stabilized finite elements},
doi = {10.1002/cnm.3603},
year = {2022}
}

@book{BGHRR24,
author = {Bernardi, Christine and Girault, Vivette and Hecht, Frédéric and Raviart, Pierre-Arnaud and Rivière, Beatrice},
title = {Mathematics and Finite Element Discretizations of Incompressible {N}avier—{S}tokes Flows},
publisher = {Society for Industrial and Applied Mathematics},
year = {2024},
doi = {10.1137/1.9781611978124},
address = {Philadelphia, PA}
}

@article{BBGS-Taxo-04,
author = {Barth, Teri and Bochev, Pavel and Gunzburger, Max and Shadid, John},
title = {A Taxonomy of Consistently Stabilized Finite Element Methods for the {S}tokes Problem},
journal = {SIAM Journal on Scientific Computing},
volume = {25},
number = {5},
pages = {1585-1607},
year = {2004},
doi = {10.1137/S1064827502407718}
}

@article{BFH-CIP-06,
author = {Burman, Erik and Fern\'{a}ndez, Miguel A. and Hansbo, Peter},
title = {Continuous Interior Penalty Finite Element Method for {O}seen's Equations},
journal = {SIAM Journal on Numerical Analysis},
volume = {44},
number = {3},
pages = {1248-1274},
year = {2006},
doi = {10.1137/040617686},
}

@article{Be-Bra-016,
author = {Becker, Roland and Braack, Malte},
title = {A finite element pressure gradient stabilization for the {S}tokes equations based on local projections},
journal = {Calcolo},
volume = {38},
number = {},
pages = {173-199},
year = {2001},
doi = {10.1007/s10092-001-8180-4},
}

@article{BV02,
author = {Barrenechea, Gabriel R. and Valentin, Fr\'ed\'Eric},
title = {An unusual stabilized finite element method for a generalized {S}tokes problem },
journal = {Numerische Mathematik},
volume = {92},
number = {},
pages = {653-677},
year = {2002},
doi = {10.1007/s002110100371},
}

@book{RST08,
	author = {Roos, Hans-G\"org and Stynes, Martin and Tobiska, Lutz},
	title = {Robust Numerical Methods for Singularly Perturbed Differential Equations},
	publisher = {Springer-Verlag},
	year = {2008},
	doi = {10.1007/978-3-540-34467-4},
	address = {Berlin Heidelberg},
}

@article{BV2010,
 optURL = {http://www.jstor.org/stable/41149026},
 author = {Barrenechea, Gabriel R. and Valentin, Fr\'ed\'Eric},
 journal = {SIAM Journal on Numerical Analysis},
 number = {5},
 pages = {1801--1825},
 publisher = {Society for Industrial and Applied Mathematics},
  title = {Consistent local projection stabilized finite element methods},
 opturldate = {2025-11-27},
 volume = {48},
 year = {2010},
 doi={10.1137/090753334}
}

@article{PR24,
title = {Equal-order finite element method for the {S}tokes equations with variable viscosity},
journal = {Applied Mathematics Letters},
volume = {149},
pages = {108930},
year = {2024},
doi = {10.1016/j.aml.2023.108930},
author = {Abner H. Poza and Ramiro Rebolledo},
}

@article{APV15,
    author = {Araya, Rodolfo and Poza, Abner H. and Valentin, Fr\'ed\'eric},
    title = {A low-order local projection method for the incompressible {N}avier–{S}tokes equations in two- and three-dimensions},
    journal = {IMA Journal of Numerical Analysis},
    volume = {36},
    number = {1},
    pages = {267-295},
    year = {2015},
    month = {02},
    doi = {10.1093/imanum/drv004},
}

@article{TV96,
 OPTISSN = {00361429},
 OPTURL = {http://www.jstor.org/stable/2158427},
 author = {Lutz Tobiska and Rüdiger Verfürth},
 journal = {SIAM Journal on Numerical Analysis},
 number = {1},
 pages = {107--127},
 publisher = {Society for Industrial and Applied Mathematics},
 title = {Analysis of a Streamline Diffusion Finite Element Method for the {S}tokes and {N}avier-{S}tokes Equations},
 urldate = {2025-11-27},
 volume = {33},
 year = {1996},
 doi={10.1137/0733007}
}

@article{XIA2007513,
title = {A {G}alerkin/least-square finite element formulation for nearly incompressible elasticity/{S}tokes flow},
journal = {Applied Mathematical Modelling},
volume = {31},
number = {3},
pages = {513-529},
year = {2007},
issn = {0307-904X},
doi = {10.1016/j.apm.2005.11.009},
author = {Kaiming Xia and Haishen Yao},
}

@article{Blas-Aniso-08,
title = {An anisotropic {GLS}-stabilized finite element method for incompressible flow problems},
journal = {Computer Methods in Applied Mechanics and Engineering},
volume = {197},
number = {45},
pages = {3712-3723},
year = {2008},
doi = {10.1016/j.cma.2008.02.031},
OPTurl = {https://www.sciencedirect.com/science/article/pii/S0045782508000807},
author = {Jordi Blasco},
OPTkeywords = {Incompressible flows, Stabilized methods, Anisotropic meshes, Navier–Stokes equations, Finite elements},
abstract = {We consider a stabilized, finite element approximation of incompressible flow problems in primitive velocity–pressure variables, which is based on a GLS formulation. Equal order interpolation for the velocity and the pressure can be employed with this formulation. The method introduced here is specially developed to be used on anisotropic finite element meshes with large element aspect ratios. Rather than using a single stabilization parameter within each element, the Jacobian matrix of the affine transformation from the reference element is employed for stabilization purposes. Stability and anisotropic error estimates in a suitable mesh-dependent norm are proved for the linear Stokes problem. Numerical results confirm the accuracy and robustness of the proposed method compared to similar previous stabilized schemes.}
}

@article{ACPV21,
title = {An adaptive multiscale hybrid-mixed method for the {O}seen equations},
journal = {Advances in Computational Mathematics},
volume = {15},
number = {},
pages = {},
year = {2021},
issn = {},
doi = {10.1007/s10444-020-09833-8},
OPTurl = { https://doi.org/10.1007/s10444-020-09833-8},
author = {Araya, Rodolfo and C\'arcamo, Cristian and Poza, Abner H. and Valentin, Fr\'ed\'Eric},
}

@article{FF95,
title = {Bubble functions prompt unusual stabilized finite element methods},
journal = {Computer Methods in Applied Mechanics and Engineering},
volume = {123},
number = {1},
pages = {299-308},
year = {1995},
issn = {0045-7825},
doi = {10.1016/0045-7825(94)00721-X},
OPTurl = {https://www.sciencedirect.com/science/article/pii/004578259400721X},
author = {Leopoldo P. Franca and Charbel Farhat},
}

@ARTICLE{sha49,
	author={Shannon, C.E.},
	journal={Proceedings of the IRE}, 
	title={Communication in the Presence of Noise}, 
	year={1949},
	volume={37},
	number={1},
	pages={10-21},
	doi={10.1109/JRPROC.1949.232969}}

@article{Horowitz2021,
  author       = {Horowitz, Michael J. and Kupsky, Daniel F. and El-Said, Howaida G. and Alshawabkeh, Laith and Kligerman, Seth J. and Hsiao, Albert},
  title        = {{4D} Flow {MRI} Quantification of Congenital Shunts: Comparison to Invasive Catheterization},
  journal      = {Radiology: Cardiothoracic Imaging},
  year         = {2021},
  volume       = {3},
  number       = {2},
  pages        = {e200446},
  doi          = {10.1148/ryct.2021200446},
  OPTurl          = {https://pubmed.ncbi.nlm.nih.gov/33969306/},
  OPTnote         = {Compares 4D flow MRI shunt fraction measurements to invasive cardiac catheterization demonstrating strong agreement and supporting 4D flow as a noninvasive alternative.} 
}

@article{Hsu2021,
  author  = {Hsu, Steven and Fang, James C. and Borlaug, Barry A.},
  title   = {Hemodynamics for the Heart Failure Clinician: A State-of-the-Art Review},
  journal = {Journal of Cardiac Failure},
  year    = {2022},
  volume  = {28},
  number  = {1},
  pages   = {133--148},
  doi     = {10.1016/j.cardfail.2021.07.012}
}

@article{Gabbour2015_4DFlow,
  author  = {Maya Gabbour and Susanne Schnell and Kelly Jarvis and Joshua D. Robinson and Michael Markl and Cynthia K. Rigsby},
  title   = {{4-D} flow magnetic resonance imaging: blood flow quantification compared to 2-{D} phase-contrast magnetic resonance imaging and {D}oppler echocardiography},
  journal = {Pediatric Radiology},
  year    = {2015},
  volume  = {45},
  number  = {6},
  pages   = {804--813},
  doi     = {10.1007/s00247-014-3246-z},
}

@article{Ramos2020,
  author  = {Ramos, J. G. and Fyrdahl, A. and Wieslander, B. and others},
  title   = {Cardiovascular Magnetic Resonance {4D} Flow Analysis Has a Higher Diagnostic Yield Than {D}oppler Echocardiography for Detecting Increased Pulmonary Artery Pressure},
  journal = {BMC Medical Imaging},
  year    = {2020},
  volume  = {20},
  pages   = {28},
  doi     = {10.1186/s12880-020-00428-9}
}

@article{Ma2019,
author = {Ma, Liliana E. and Markl, Michael and Chow, Kelvin and Huh, Hyungkyu and Forman, Christoph and Vali, Alireza and Greiser, Andreas and Carr, James and Schnell, Susanne and Barker, Alex J. and Jin, Ning},
title = {Aortic {4D} flow {MRI} in 2 minutes using compressed sensing, respiratory controlled adaptive $k-$space reordering, and inline reconstruction},
journal = {Magnetic Resonance in Medicine},
volume = {81},
number = {6},
pages = {3675-3690},
OPTkeywords = {4D flow, aorta, cardiovascular, compressed sensing},
doi = {10.1002/mrm.27684},
year = {2019}
}

@article{Neuhaus2019,
  author  = {Neuhaus, Elisabeth and Weiss, Kilian and Bastkowski, Rene and Koopmann, Jonas and Maintz, David and Giese, Daniel},
  title   = {Accelerated aortic {4D} flow cardiovascular magnetic resonance using compressed sensing: applicability, validation and clinical integration},
  journal = {Journal of Cardiovascular Magnetic Resonance},
  year    = {2019},
  volume  = {21},
  pages   = {65},
  doi     = {10.1186/s12968-019-0573-0},
}

@article{Lustig2007,
  author  = {Lustig, Michael and Donoho, David and Pauly, John M.},
  title   = {Sparse {MRI}: The application of compressed sensing for rapid {MR} imaging},
  journal = {Magnetic Resonance in Medicine},
  year    = {2007},
  volume  = {58},
  number  = {6},
  pages   = {1182--1195},
  doi     = {10.1002/mrm.21391},
}

@article{Pathrose2021,
  author  = {Pathrose, A. and Schrauben, E. M. and Brodsky, E. and et~al.},
  title   = {Highly accelerated aortic {4D} flow {MRI} using compressed sensing: Performance at different acceleration factors},
  journal = {Magnetic Resonance in Medicine},
  year    = {2021},
  volume  = {85},
  number  = {6},
  pages   = {3362--3376},
  doi     = {10.1002/mrm.28561},
}

@article{Pruessmann1999,
  author  = {Pruessmann, Klaas P. and Weiger, Markus and Scheidegger, Markus B. and Boesiger, Peter},
  title   = {{SENSE}: Sensitivity Encoding for Fast {MRI}},
  journal = {Magnetic Resonance in Medicine},
  year    = {1999},
  volume  = {42},
  number  = {5},
  pages   = {952--962},
  doi     = {10.1002/(SICI)1522-2594(199911)42:5<952::AID-MRM16>3.0.CO;2-S}
}

@article{Griswold2002,
  author  = {Griswold, Mark A. and Jakob, Peter M. and Heidemann, Robin M. and Nittka, Mathias and Jellus, Vladimir and Wang, Jianmin and Kiefer, Berthold and Haase, Axel},
  title   = {Generalized Autocalibrating Partially Parallel Acquisitions {(GRAPPA)}},
  journal = {Magnetic Resonance in Medicine},
  year    = {2002},
  volume  = {47},
  number  = {6},
  pages   = {1202--1210},
  doi     = {10.1002/mrm.10171}
}

@article{Tsao2003,
  author  = {Tsao, Jeffrey and Boesiger, Peter and Pruessmann, Klaas P.},
  title   = {$k-t$ {BLAST} and $k-t$ {SENSE}: {D}ynamic {MRI} with high frame rate exploiting spatiotemporal correlations},
  journal = {Magnetic Resonance in Medicine},
  year    = {2003},
  volume  = {50},
  number  = {5},
  pages   = {1031--1042},
  doi     = {10.1002/mrm.10611}
}

@article{Schnell2014,
author = {Schnell, Susanne and Markl, Michael and Entezari, Pegah and Mahadewia, Riti J. and Semaan, Edouard and Stankovic, Zoran and Collins, Jeremy and Carr, James and Jung, Bernd},
  title   = {$k-t$ {GRAPPA} accelerated four-dimensional flow {MRI} in the aorta: {E}ffect on scan time, image quality, and quantification of flow and wall shear stress},
  journal = {Magnetic Resonance in Medicine},
  year    = {2014},
  volume  = {72},
  number  = {2},
  pages   = {522--533},
  doi     = {10.1002/mrm.24925}
}

@article{Lustig2008,
  author  = {Lustig, Michael and Donoho, David and Santos, Juan M. and Pauly, John M.},
  title   = {Compressed Sensing {MRI}},
  journal = {IEEE Signal Processing Magazine},
  year    = {2008},
  volume  = {25},
  number  = {2},
  pages   = {72--82},
  doi     = {10.1109/MSP.2007.914728}
}

\end{document}